\newcommand{\nc}{\newcommand}
\newcommand{\delete}[1]{}
\nc{\mlabel}[1]{\label{#1}}  % Use this to suppress names
\nc{\mcite}[1]{\cite{#1}}  % Use this to suppress names
\nc{\mref}[1]{\ref{#1}}  % Use this to suppress names
\nc{\mbibitem}[1]{\bibitem{#1}} % Use this to show number name
\nc{\mlabel}[1]{\label{#1}  % Use the next two lines to show names
{\hfill \hspace{1cm}{\small\tt{{\ }\hfill(#1)}}}}
\nc{\mcite}[1]{\cite{#1}{\small{\tt{{\ }(#1)}}}}  % Use this lines to show names
\nc{\mref}[1]{\ref{#1}{{\tt{{\ }(#1)}}}}  % Use this lines to show names
\nc{\mbibitem}[1]{\bibitem[\bf #1]{#1}} % Use this to show name
\newtheorem{theorem}{Theorem}[section]
\newtheorem{prop}[theorem]{Proposition}
\newtheorem{lemma}[theorem]{Lemma}
\newtheorem{coro}[theorem]{Corollary}
\theoremstyle{definition}
\newtheorem{defn}[theorem]{Definition}
\newtheorem{remark}[theorem]{Remark}
\newtheorem{exam}[theorem]{Example}
\newtheorem{prop-def}{Proposition-Definition}[section]
\newcommand\cal[1]{\mathcal{#1}}
\newcommand\alphlist{a,b,c,d,e,f,g,h,i,j,k,l,m,n,o,p,q,r,s,t,u,v,w,x,y,z}
\newcommand\Alphlist{A,B,C,D,E,F,G,H,I,J,K,L,M,N,O,P,Q,R,S,T,U,V,W,X,Y,Z}
\newcommand\getcmds[3]{\expandafter\newcommand\csname #2#1\endcsname{#3{#1}}}
\alphlist\do{\expandafter\getcmds\expandafter{\x}{frak}{\mathfrak}}% \fraka,\frakb,...
\Alphlist\do{\expandafter\getcmds\expandafter{\x}{frak}{\mathfrak}}% \frakA,\frakB,...
\nc{\bfk}{{\bf k}}
\font\cyr=wncyr10
\newfont{\scyr}{wncyr10 scaled 550}
\nc{\sha}{\mbox{\cyr X}}
\nc{\ssha}{\mbox{\bf \scyr X}}
\nc{\id}{\mathrm{id}}
\nc{\Id}{\mathrm{Id}}
\nc{\lbar}[1]{\overline{#1}}
\nc{\ot}{\otimes}
\nc{\dep}{\mathrm{dep}}
\nc{\tred}[1]{\textcolor{red}{#1}} \nc{\tgreen}[1]{\textcolor{green}{#1}}
\nc{\tblue}[1]{\textcolor{blue}{#1}} \nc{\tpurple}[1]{\textcolor{purple}{#1}}
\nc{\li}[1]{\tpurple{\underline{Li:}#1 }}
\nc{\liadd}[1]{\tpurple{#1}}
\nc{\xing}[1]{\tblue{\underline{Xing:}#1 }}
\nc{\dominique}[1]{\tblue{\underline{Dominique: }#1 }}
\nc{\yuan}[1]{\tred{\underline{Yuan:}#1 }}
\nc{\markus}[1]{\tred{\underline{Markus:} #1}}
\newlength\xch
\newsavebox\dbox
\sbox\dbox{\tikz{\fill (0,0) circle (0.05cm);}}
\newif\ifqdd
\newif\ifzdd
\newcommand\cddf[3]{%
\coordinate (#2) at ($(#1)+(#3)$);
\draw (#1)--(#2);
\ifqdd\node at (#1) {\usebox\dbox};\fi
\ifzdd\node at (#2) {\usebox\dbox};\fi}
\newcommand\cdx[4][1]{\cddf{#2}{#3}{#4:#1*\xch}}
\newcommand\cdl[2][1]{\cdx[#1]{#2}{#2l}{135}}%left
\newcommand\cdr[2][1]{\cdx[#1]{#2}{#2r}{45}}%right
\newcommand\cdlr[2][1]{%
\foreach \i in {#2} {\cdl[#1]{\i}\cdr[#1]{\i}}}%left right
\newcommand\cda[2][1]{\cdx[#1]{#2}{#2a}{90}}%above
\newcommand\cdb[2][1]{\cdx[#1]{#2}{#2b}{-90}}%below
\let\treeoo\treeo%
\newcommand\zhongdian[2]{\node at ($(#1)!0.5!(#2)$) {\usebox\dbox};}
\newcommand\zhd[1]{\foreach \i/\j in {#1} {\zhongdian{\i}{\i\j}}}
\newcommand\ocdx[6][1]{%
\node[draw,circle,minimum size=2pt,label={#6:$#5$}]
(#3) at ($(#2)+(#4:#1*\xch)$) {};
\draw (#2)--(#3);}
\newcommand\ocdl[3][1]{\ocdx[#1]{#2}{#2l}{135}{#3}{above}}%left
\newcommand\ocdr[3][1]{\ocdx[#1]{#2}{#2r}{45}{#3}{above}}%right
\newcommand\scopeclip[1]{\begin{scope}
\clip(-1.1,-0.5)rectangle(1.1,1);#1\end{scope}}
\newcommand\XX[2][]{%
\tikz[line width=0.15ex,x=0.5cm,y=0.5cm,baseline,inner sep=1.5pt,
every node/.style={font=\scriptsize},#1]{
\scopeclip{\draw (135:1.5)--(0,0)--(45:1.5) (0,-0.5)--(0,0);}#2}}
\newcommand\xx[3]{%
\scopeclip{\draw(#1/10,#2/10)--+(#3*45:2.5);}}
\newcommand\xxl[2]{\xx{#1}{#2}3}
\newcommand\xxr[2]{\xx{#1}{#2}1}
\newcommand\xxlr[2]{\xxl{#1}{#2}\xxr{#1}{#2}}
\newcommand\xxh[6]{
%\draw(#1/10,#2/10)+(#3*45:#6)arc(#3*45:#4*45:#6);
\draw(#1/10,#2/10)+(0.5*#3*45+0.5*#4*45:#6) node[above] {$#5$};}
\newcommand\xxhu[4][0.15]{\xxh{#2}{#3}13{#4}{#1}}
\newcommand\stree[1]{\XX{\xxhu[0.25]00{#1}}}
\nc{\dnx}{\Delta_n A} \nc{\dx}{\Delta A} \nc{\dgp}{{\rm deg_{P}}}
\nc{\dgt}{{\rm deg_{T}}} \nc{\dg}{{\rm deg}} \nc{\ida}{ID($A$)} \nc{\tu}{\tilde{u}} \nc{\tv}{\tilde{v}}
\nc{\nr}{\calr_n} \nc{\nz}{\calz_n} \nc{\fun}{\cala_{n,d}}
 \nc{\fbase}{\calb} \nc{\LF}{\mathrm{RF}} \nc{\FFA}{\mathrm{LF}} \nc{\irr}{\mathrm{Irr}}
 \nc{\result}{\bfk\mathrm{Irr}(S_n)}  \nc{\I}{I_{\mathrm{ID},n}^0}
 \nc{\nrs}{\calr_n^\star} \nc{\ii}{\mathrm{I}} \nc{\iii}{\mathrm{II}}
\nc{\intl}{{\rm int}}\nc{\ws}[1]{{#1}}\nc{\deleted}[1]{\delete{#1}}\nc{\plas}{placements\xspace}
\nc{\bim}[1]{#1}  \nc{\shaop}{\sha_{\Omega}^{+}}  \nc{\shao}{\sha_{\Omega}}
\nc{\bbim}[2]{#1 #2} \nc{\bbbim}[2]{#1,\, #2} \nc{\RBF}{{\rm RBF}}
\nc{\frb}{F_{\RB}} \nc{\shaf}{\ssha_{\tiny{\Omega}}} \nc{\sham}{\diamond_{\tiny{\Omega}}}
\nc{\lf}{\lfloor} \nc{\rf}{\rfloor} \nc{\shan}{\ssha_{\lambda}}
\nc{\rlex}{{\rm {lex}}} \nc{\bb}{\Box} \nc{\ra}{\rightarrow}
\nc{\e}{{\rm {e}}}
\nc{\DDF}{\mathrm{DD}(X,\,\Omega)}\nc{\DTF}{\mathrm{DT}(X,\,\Omega)} \nc{\DT}{\mathrm{DT}'(\Omega,\,V)}
\nc{\bra}{\mathrm{bra}} \nc{\bre}{\mathrm{bre}}
\nc{\dec}{\mathrm{dec}} \nc{\diamondw}{\diamond_{w}}
\nc{\type}{\mathrm{type}}
\nc\caF[1]{\cal{F}_{#1}(X,\,\Omega)}
\nc\calt{\cal{T}(X,\,\Omega)} \nc\caltn{\cal{T}_n(X,\,\Omega)}
\nc\caltbin{\cal{T}_b(X,\,\Omega)}
\nc\calta{\cal{T}_0(X,\,\Omega)}
\nc\caltb{\cal{T}_1(X,\,\Omega)}
\nc\caltc{\cal{T}_2(X,\,\Omega)}
\nc\caltd{\cal{T}_3(X,\,\Omega)}
\nc\caltm{\cal{T}_m(X,\,\Omega)}
\nc\calf{\cal{F}(X,\,\Omega)}
\nc\fram{\frak{M}(\Omega,\, X)}
\nc\shaw{\sha^{NC}_w(\Omega,\, X)}
\nc\dw{\diamond_w} \nc\dl{\diamond_\ell}
\nc\shal{\sha^{NC}_\ell(X,\, \Omega)} \nc\shav{\sha^{NC}_w(\Omega,\, V)} \nc\shat{\sha^{NC,1}_w(\Omega,\, T^{+}(V))}
\nc{\cfo}{\cal{F}(X,\,\Omega)}
\nc{\lar}{\varinjlim}
\nc\XO{(X,\,\Omega)}
\def\cxo#1#2;{\cal{#1}#2\XO}
\def\cxob#1#2;{\cal{#1}#2_b\XO}
\nc\lrf[2]{B_{#2}^+(#1)}
\nc{\fd}{\mathrm{\text{typed angularly decorated planar rooted trees}}}
\nc{\rb}{\mathrm{RBFWs}} \nc{\dfw}{\mathrm{DFW{(X)}}} \nc{\tfw}{\mathrm{TFW{(X)}}}
\nc{\tfv}{\mathrm{TFW{(V)}}} \nc{\rbf}{\mathrm{RBF}}
\def\Ve#1,#2,#3;{\vee_{#1,\,(#2,\,#3)}}
\def\bigv#1;#2;#3;{\bigvee\nolimits_{#1}^{#2;\,#3}}
\begin{document}

\title[Free Rota-Baxter family algebras and Free (tri)dendriform family algebras]{Free Rota-Baxter family algebras and Free (tri)dendriform family algebras}
%\newsavebox\cheadbox
%\sbox\cheadbox{\smash[t]{\parbox[b]{\linewidth}{\centering\scriptsize
%\MakeUppercase{Rota-Baxter monoid and Rota-Baxter algebra,}\\
%\MakeUppercase{tridendriform algebra, free commutative algebra}}}}
%
%\title[\usebox\cheadbox]{Rota-Baxter monoid and Rota-Baxter algebra, tridendriform algebra, free commutative algebra}
%

\author{Yuanyuan Zhang} \address{School of Mathematics and Statistics,
Lanzhou University, Lanzhou, 730000, P. R. China}
\email{zhangyy17@lzu.edu.cn}
\author{Xing Gao}
\address{School of Mathematics and Statistics,
Key Laboratory of Applied Mathematics and Complex Systems,
Lanzhou University, Lanzhou, 730000, P.R. China}
\email{gaoxing@lzu.edu.cn}
\author{Dominique Manchon}
\address{Laboratoire de Math\'ematiques Blaise Pascal,
CNRS--Universit\'e Clermont-Auvergne,
3 place Vasar\'ely, CS 60026,
F63178 Aubi\`ere, France}
\email{Dominique.Manchon@uca.fr}

\date{\today}

\begin{abstract}
 In this paper, we first construct the free Rota-Baxter family algebra generated by some set $X$ in terms of typed angularly $X$-decorated planar rooted trees.
As an application, we obtain a new construction of the free Rota-Baxter algebra only in terms of
angularly decorated planar rooted trees (not forests), which is quite different from the known
construction via angularly decorated planar rooted forests by K. Ebrahimi-Fard and L. Guo.
We then embed the free dendriform (resp. tridendriform) family algebra into the free Rota-Baxter family algebra of weight zero (resp. one). Finally, we prove that the free Rota-Baxter family algebra is the universal enveloping algebra of the free (tri)dendriform family algebra.
\end{abstract}

\subjclass[2010]{
%15A30, %Algebraic systems of matrices
16W99, %Rings and algebras with additional structure
16S10, %Rings determined by universal properties (free algebras, coproducts, adjunction of inverses, etc.)
13P10, %Grobner bases; other bases for ideals and modules
%16S15, %Finite generation, finite presentability, normal forms
%12H05, %Differential algebra
%08A70, %application of universal algebra to computer science
08B20, %free algebras
%16R99 %Rings with polynomial identities\none of the above, but in this section
%06F05,  %Ordered semigroups and monoids
%20M05 %Free semigroups, generators and relations, word problems
%16S50 %Endomorphism rings; matrix rings
%16T10, %Bialgebras
%16T05,  %Hopf algebras and their applications
%16T30,  %Connections with combinatorics
%17B60, %Lie (super)algebras associated with other structures (associative,Jordan, etc.)
%17D25      %Lie-admissible algebras
%81T15  %Perturbative methods of renormalization
%81R10,  %Infinite-dimensional groups and algebras motivated by physics, including Virasoro, Kac-Moody, $W$-algebras and other current algebras and their representations [See also 17B65, 17B67, 22E65, 22E67, 22E70]
%81R15  %    Operator algebra methods [See also 46Lxx, 81T05]
}

\keywords{Rota-Baxter family algebra, (tri)dendriform family algebra, typed decorated planar rooted trees, free Rota-Baxter family algebras, free (tri)dendriform family algebras.}

\maketitle

\tableofcontents

\setcounter{section}{0}

\allowdisplaybreaks
%========================================================================
\section{Introduction}%\mlabel{sec:rota}
%%%%%%
Algebraic structures may appear in ``family versions", where the operations at hand are replaced by a family of operations indexed by some set $\Omega$, in general endowed with a semigroup structure. The first example of this situation appeared in 2007 in a paper by K. Ebrahimi-Fard, J. Gracia-Bondia and F. Patras \cite[Proposition~9.1]{FBP} (see also \cite[Theorem 3.7.2]{DK}) about algebraic aspects of renormalization in Quantum Field Theory, where a ``Rota-Baxter family" appears: this terminology was suggested to the authors by Li Guo (see Footnote following Proposition 9.2 therein), who further duscussed the underlying structure under the name \textsl{Rota-Baxter Family algebra} in \cite{Guo09}. Various other kinds of family algebraic structures have been recently defined \cite{Foi20, ZG, ZGM, ZM}.
\begin{defn}\mlabel{def:pp}
Let $\Omega$ be a semigroup, let $\bfk$ be a unital commutative ring and let $\lambda\in \bfk$ be given.
A {\bf Rota-Baxter family} of weight $\lambda$ on a $\bfk$-algebra $R$ is a collection of linear operators $(P_\omega)_{\omega\in\Omega}$ on $R$ such that
\begin{equation}
P_{\alpha}(a)P_{\beta}(b)=P_{\alpha\beta}\left( P_{\alpha}(a)b  + a P_{\beta}(b) + \lambda ab \right),\, \text{ for }\, a, b \in R\,\text{ and }\, \alpha,\, \beta \in \Omega.
\mlabel{eq:RBF}
\end{equation}
The pair $\big(R,\, (P_\omega)_{\omega\in\Omega}\big)$ is called a {\bf Rota-Baxter family algebra} of weight $\lambda$. The example considered in \cite{FBP, Guo09, DK} is the algebra of Laurent series $R=\bfk[z^{-1},z]]$. It is a Rota-Baxter family algebra of weight $-1$, with $\Omega=(\mathbb Z,+)$, where the operator $P_\omega$ is the projection onto the subspace $R_{<\omega}$ generated by $\{z^k,\,k<\omega\}$ parallel to the supplementary subspace $R_{\ge \omega}$ generated by $\{z^k,\,k\ge\omega\}$.\\
\end{defn}
Rota-Baxter algebras (in the ordinary sense) are Rota-Baxter family algebras indexed by the trivial semigroup reduced to one element. They find their origin in the work of the American mathematician Glen E. Baxter~\mcite{Bax} in the realm of probability theory. The free Rota--Baxter algebras were constructed in~\cite{GK2,Guo12}, as well as in~\cite{EFG} and~\mcite{GK3}. The authors in~\cite{ZG} constructed free commutative Rota-Baxter family algebras and free noncommutative Rota-Baxter family algebras via the method of Gr\"{o}bner-Shirshov bases.\\

In this paper, we use planar rooted trees, both angularly decorated by the set $X$ of generators and typed by the semigroup $\Omega$ (see Definition \ref{defn:tdtree1} below), to provide an alternative realization of free Rota-Baxter family algebras (Theorem \ref{thm:free}). The model we propose here is inspired by our recent description of free dendriform and tridendriform family algebras \cite{ZG, ZGM}, which we now briefly outline: J.-L. Loday proposed the concept of dendriform algebra~\cite{Lod93} with motivation from algebraic K-theory. J.-L.~Loday and M.~Ronco later introduced the concept of a tridendriform algebra~\cite{LoRo98} in the study of polytopes and Koszul duality. Dendriform and tridendriform family algebras (see Definitions \ref{defn:dend} and \ref{defn:tridend} below) were introduced in~\cite{ZG}. Free tridendriform family algebra were constructed in ~\cite{ZGM} in terms of typed angularly decorated Schr\"oder trees, also known as planar reduced trees. They are exactly the same trees as above, except that each vertex has at least two incoming edges. Free dendriform family algebras were similarly constructed in terms of typed decorated planar binary trees.\\

It is well known that a Rota--Baxter algebra of weight zero (resp.~weight $\lambda\neq 0$) possesses a dendriform (resp.~tridendriform) algebra structure~\mcite{Agu99, EbFa}. This also holds for the family versions \cite{ZG}. Universal enveloping Rota-Baxter algebras of (tri)dendriform algebras have been studied in ~\cite{EFG08}. Whenever $\bfk$ is a field, we prove, using the natural inclusion of binary trees and Schr\"oder trees into planar rooted trees, that the free Rota-Baxter family algebra of weight $\lambda= 0$ (resp. $\lambda\neq 0$) generated by a set $X$ is the universal enveloping algebra of the free dendriform (resp. tridendriform) family algebra generated by $X$ (Theorem \ref{thm:ddf} and Theorem \ref{thm:dtf}).\\
\delete{
This enables us to consider the relationship between free Rota-Baxter family algebras and free (tri)dendriform family algebras. Ebrahimi-Fard and Guo considered the universal enveloping algebras of (tri)dendriform algebras in terms of words in,
which inspires us to study the universal enveloping Rota-Baxter family algebra of the free (tri)dendriform family algebra in terms of angularly decorated planar rooted trees .\\
}

The paper is organized as follows: In Section~\mref{sec:FRBF}, we first define a multiplication on the linear span of the set $\calt$ of typed angularly decorated planar rooted trees. Second, we introduce the set  $\frak{X}_\infty$ of Rota-Baxter family words, and contruct a bijection $\psi$ from $\frak{X}_\infty$ onto $\calt$ (Proposition~\mref{prop:iso}). Finally, using the bijection $\psi$ and the results of \cite{ZG}, we prove that the linear span $\bfk\calt$ is the free Rota-Baxter family algebra generated by $X$ (Theorem~\mref{thm:free}). In Section~\mref{sec:emb}, we show that the natural inclusion of typed decorated planar binary trees into typed angularly decorated planar rooted trees makes the free dendriform family algebra on $X$ a canonical dendriform family subalgebra of the free Rota-Baxter family algebra $\bfk\calt$ of weight $0$ (Theorem ~\mref{thm:sub1}). Similarly, the natural inclusion of typed decorated Schr\"oder trees makes the free tridendriform family algebra on $X$ a canonical tridendriform family subalgebra of the free Rota-Baxter family algebra $\bfk\calt$ of weight $1$ (Theorem ~\mref{thm:main}). At the same time, we prove that the free Rota-Baxter family algebra of weight zero (resp. one) is the universal enveloping algebra of the free dendriform (resp.~tridendriform) family algebra Theorem~\mref{thm:ddf} (resp. Theorem~\mref{thm:dtf}). Whenever $\bfk$ is a field, the results outlined above for weight one are also valid for weight $\lambda$ modulo multiplicating the embedding map by $\lambda^{-1}$.\\

{\bf Notation.}
 Throughout this paper, let $\bfk$ be a unitary commutative ring which will be the base ring of all modules, algebras, as well as linear maps, unless otherwise specified.
 Algebras are unitary associative algebras but not necessary commutative. Let RBFAs,
 DFAs and TFAs be short for Rota-Baxter family algebras, dendriform family algebras and tridendriform family algebras, respectively.
\section{Free Rota-Baxter family algebras}
\mlabel{sec:FRBF}
In this section, we construct free Rota-Baxter family algebras in terms of typed angularly decorated planar rooted trees.

\subsection{Typed angularly decorated planar rooted trees}
\mlabel{sec:RBF}
Typed decorated rooted trees~\cite{BHZ,Foi18} are rooted trees with vertex decoration and edge decoration.
For a rooted tree $T$, denote by $V(T)$ (resp. $E(T)$) the set of its vertices (resp. edges). In this subsection, we introduce typed angularly decorated planar rooted trees.
\begin{defn}\mlabel{defn:tdtree}\cite{BHZ}
Let $X$ and $\Omega$ be two sets. An {\bf $X$-decorated $\Omega$-typed (abbreviated typed decorated) rooted tree}
is a triple $T = (T,\dec, \type)$, where
\begin{enumerate}
\item $T$ is a rooted tree.
\item $\dec: V(T)\ra X$ is a map.
\item $\type: E(T)\ra \Omega$ is a map.
\end{enumerate}
\end{defn}

In~\cite{ZGM}, typed decorated planar binary trees were used to construct free dendriform family algebras. Along these lines,
in the present paper, we use typed decorated planar rooted trees to construct free Rota-Baxter family algebras
which have close relationship with free dendriform family algebras. So we adopt the graphical representation of planar rooted trees as the one of planar binary trees in~\cite{ZGM},
which is slightly different from the graphical representation of rooted trees used in \cite{BHZ,Foi18}: the root and the leaves are now edges rather than vertices.
See examples below. So the set $E(T)$ must be replaced by the set $IE(T)$ of internal edges, i.e. edges which are neither leaves nor the root. However, a planar rooted tree has at least one vertex. Each vertex $v$ yields a (possibly empty) set of angles $A(v)$, an angle being a pair $(e,e')$ of adjacent incoming edges for $v$. Let $A(T)=\bigsqcup_{v\in V(T)}A(v)$ be the set of angles of the planar rooted tree $T$. We propose the following concept:
\begin{defn}\mlabel{defn:tdtree1}
Let $X$  and $\Omega$ be two sets.  An {\bf $X$-angularly decorated $\Omega$-typed (abbreviated typed angularly decorated) planar rooted tree}
is a triple $T = (T,\dec, \type)$, where
\begin{enumerate}
\item $T$ is a planar rooted tree.
\item $\dec: A(T)\ra  X$ is a map.
\item $\type: IE(T)\ra \Omega$ is a map.
\end{enumerate}
\end{defn}

\delete{
Rota-Baxter family algebras, proposed by Guo, are a generalization of Rota-Baxter algebras and
arise naturally in renormalization of quantum field theory~\cite{DK,FBP}.

\begin{defn}\mlabel{def:pp}
Let $\Omega$ be a semigroup and $\lambda\in \bfk$ be given.
A {\bf Rota-Baxter family} of weight $\lambda$ on an algebra $R$ is a collection of linear operators $(P_\omega)_{\omega\in\Omega}$ on $R$ such that
\begin{equation}
P_{\alpha}(a)P_{\beta}(b)=P_{\alpha\beta}\left( P_{\alpha}(a)b  + a P_{\beta}(b) + \lambda ab \right),\, \text{ for }\, a, b \in R\,\text{ and }\, \alpha,\, \beta \in \Omega.
\mlabel{eq:RBF}
\end{equation}
Then the pair $(R,\, (P_\omega)_{\omega\in\Omega})$ is called a {\bf Rota-Baxter family algebra} of weight $\lambda$.
\end{defn}

When the semigroup $\Omega$ is trivial, a Rota-Baxter family algebra degenerates to a Rota-Baxter algebra,
which is originated by the American mathematician Glen E. Baxter~\mcite{Bax} in the realm of probability theory.}

If a semigroup $\Omega$ has no identity element, we consider the monoid $\Omega^{1}:=\Omega\sqcup \{1\}$
obtained from $\Omega$ by adjoining an identity:
 $$1\omega:=\omega1:=\omega,\,\text{ for }\,\omega\in\Omega\,\text{ and }\, 11:=1.$$

For $n\geq 0$, let $\caltn$ denote the set of  $X$-angularly decorated $\Omega^{1}$-typed planar rooted trees with $n+1$ leaves such that leaves are decorated by the identity 1 in $\Omega^{1}$ and internal edges are decorated by elements of $\Omega.$ Note that the root is not decorated. Denote by
$$\calt:= \bigsqcup_{n\geq 0}\caltn\,\text{ and }\, \bfk\calt :=\bigoplus_{n\geq 0}\bfk\caltn.$$
Here are some examples in $\calt$.
\begin{align*}
\calta=& \ \{\treeoo{\cda[1.5] o\zhd{o/a}
%\node[below] at (o) {$x$};
\node[right] at ($(o)!0.7!(oa)$) {};
},\,
\treeoo{\cda[1.0] o\zhd{o/a}\cdb[1.0]o\zhd{o/b}
%\node[below] at (o) {$x$};
\node[right] at ($(o)!0.7!(oa)$) {};
\node[right] at ($(o)!0.2!(ob)$) {$\omega_1$};
},\,
\treeoo{\cda[1.0] o\zhd{o/a}\cdb[1.0]o\zhd{o/b}
%\node[below] at (o) {$x$};
\node[right] at ($(o)!0.7!(oa)$) {};
\node[right] at ($(o)!0.2!(ob)$) {$\omega_1$};
\cdb{ob}\zhd{ob/b}
\node[right] at ($(ob)!0.1!(obb)$) {$\omega_2$};
},\,
\cdots\big|\omega_1,\omega_2,\ldots\in\Omega\},\,\,\\
\caltb =&\left\{\stree x,\,\treeoo{\cdb[1.5] o\zhd{o/b}\cdlr o
\node[above=1pt] at (o) {$x$};
\node[right] at ($(o)!0.3!(ob)$) {$\omega$};
},\,
\treeoo{\cdb[1.5] o\zhd{o/b}\cdlr[1.5] o \zhd{o/l}
\node[above=1pt] at (o) {$x$};
\node[right] at ($(o)!0.3!(ob)$) {$\omega$};
\node[left] at ($(o)!0.1!(ol)$) {$\alpha$};
},\,
\treeoo{\cdb[1.5] o\zhd{o/b}\cdlr[1.5] o \zhd{o/l}\zhd{o/r}
\node[above=1pt] at (o) {$x$};
\node[right] at ($(o)!0.3!(ob)$) {$\omega$};
\node[left] at ($(o)!0.1!(ol)$) {$\alpha$};
\node[right] at ($(o)!0.1!(or)$) {$\beta$};
},\,
%\zhd{o/l}
\treeoo{\cdb[1.5] o\zhd{o/b}\cdlr[1.5] o %\zhd{o/r}
\node at ($(o)!0.33!(ol)$) {\usebox\dbox};
\node at ($(o)!0.66!(ol)$) {\usebox\dbox};
\node[above=1pt] at (o) {$x$};
\node[right] at ($(o)!0.3!(ob)$) {$\omega$};
\node[left] at ($(o)!0.01!(ol)$) {$\alpha$};
\node[left] at ($(o)!0.3!(ol)$) {$\beta$};
%\node[right] at ($(o)!0.1!(or)$) {$\beta$};
},\,
\treeoo{\cdb[1.5] o\zhd{o/b}\cdlr o
\node[above=1pt] at (o) {$x$};
\node[right] at ($(o)!0.3!(ob)$) {$\omega$};
\cdb{ob}\zhd{ob/b}
\node[right] at ($(ob)!0.1!(obb)$) {$\alpha$};
},
\cdots\big| x\in X,\alpha,\beta,\omega,\cdots\in\Omega \right\},\\
 \caltc=& \ \left\{
\treeoo{\cdb o\zhd{o/b}\cdlr{o,ol}[1.5]\zhd{o/r}
\node[above=1pt] at (ol) {$x$};
\node[above=1pt] at (o) {$y$};
\node[right] at ($(o)!0.3!(ob)$) {$\omega$};
\node[left=1pt] at ($(o)!0.3!(ol)$) {$\beta$};
\node[right] at ($(o)!0.1!(or)$) {$\alpha$};
},
\treeoo{\cdb o\cdlr{o,or}
\node[above=1pt] at (or) {$y$};
\node[above=1pt] at (o) {$x$};
\node[right] at ($(o)!0.3!(or)$) {$\alpha$};
},
\treeoo{\cdb o\zhd{o/b}\cdlr{o,or}
\node[above=1pt] at (or) {$y$};
\node[above=1pt] at (o) {$x$};
\node[right] at ($(o)!0.3!(ob)$) {$\omega$};
\node[right] at ($(o)!0.3!(or)$) {$\alpha$};
},
\treeoo{\cdb o\cda o
\cdx[1.5]{o}{ol}{150}\cdx[1.5]{o}{or}{30}\zhd{o/r}
\node[left] at ($(o)!0.5!(oa)$) {$x$};
\node[right] at ($(o)!0.5!(oa)$) {$y$};
\node[right] at ($(o)!0.1!(or)$) {$\alpha$};
%\node[left] at ($(o)!0.5!(ob)$) {$\alpha$};
},
\treeoo{\cdb o\cda o\zhd{o/b}
\cdx{o}{ol}{150}\cdx{o}{or}{30}
\node[left] at ($(o)!0.5!(oa)$) {$x$};
\node[right] at ($(o)!0.5!(oa)$) {$y$};
\node[right] at ($(o)!0.1!(ob)$) {$\omega$};
%\node[left] at ($(o)!0.5!(ob)$) {$\alpha$};
},\,
\treeoo{\cdb o\cda o\zhd{o/b}
\cdx{o}{ol}{150}\cdx{o}{or}{30}
\node[left] at ($(o)!0.5!(oa)$) {$x$};
\node[right] at ($(o)!0.5!(oa)$) {$y$};
\node[right] at ($(o)!0.1!(ob)$) {$\omega$};
\cdb{ob}\zhd{ob/b}
\node[right] at ($(ob)!0.1!(obb)$) {$\alpha$};
},
\cdots
\big| x,y\in X, \alpha,\beta,\omega,\cdots\in\Omega\right\},\\
\caltd=&  \left\{
\XX[scale=1.8]{\xxr{-4}4\xxr{-7.5}{7.5}\zhongdian{0,0}{0.75,0.75}
\node at (-0.3,0) {$\alpha$};
\node at (-0.72,0.4) {$\beta$};
\node at (0.3,0.03) {$\omega$};
\xxhu00x \xxhu[0.1]{-4}4{\,y} \xxhu[0.1]{-7.5}{7.5}{z}
},
\XX[scale=1.6]{\xxr{-4}4\xx{-4}42
\node at (-0.3,0) {$\alpha$};
\xxhu00x \xxh{-4}423{y\ \,}{0.3} \xxh{-4}412{\ \, z}{0.3}
},
\XX[scale=1.6]{\xx00{1.6}\xx00{2.4}
\xxh001{1.6}{\ \ \,z}{0.6}
\xxh00{1.6}{2.4}{y}{0.5}
\xxh00{2.4}3{x\ \ }{0.6}
},
\XX[scale=1.6]{\xxr{-6}6\xxl66\zhongdian{0,0}{0,-1}
\draw(0,0)--(0,-1);%\zhongdian{0,0}{0,-0.5}
\node at (-0.5,0.15) {$\alpha$};
\node at (0.45,0.15) {$\beta$};
\node at (-0.25,-0.25) {$\omega$};
\xxhu00{x} \xxhu66{z} \xxhu{-6}6{y}
},
\ldots\Bigg|\,x,y,z\in X,\alpha,\beta,\omega,\cdots\in\Omega
\right\}.
\end{align*}
Graphically, an element $T\in \cxo T;$ is of the form:
\begin{equation*}
T = \treeoo{\cdb o\ocdx[2]{o}{a1}{160}{T_1}{left}
\ocdx[2]{o}{a2}{120}{T_2}{above}
\ocdx[2]{o}{a3}{60}{T_n}{above}
\ocdx[2]{o}{a4}{20}{T_{n+1}}{right}
\node at (140:\xch) {$x_1$};
\node at (90:\xch) {$\cdots$};
\node at (40:\xch) {$x_n$};
\node[below] at ($(o)!0.7!(a1)$) {$\alpha_1$};
\node[below] at ($(o)!0.7!(a4)$) {$\alpha_{n+1}$};
\node[right] at ($(o)!0.85!(a2)$) {$\alpha_2$};
\node[left] at ($(o)!0.85!(a3)$) {$\alpha_n\!$};
},\,\text{ with } n\ge 0, \text{ where }\, x_1,\cdots,x_n\in X\,\text{ and }\,\alpha_1,\cdots,\alpha_{n+1}\in\Omega^1,
\end{equation*}
i.e., $\alpha_j\in\Omega$ if $T_j\neq |;$ $\alpha_j=1$ if $T_j=|.$
For each $\omega\in\Omega,$ define a linear operator
$$B^{+}_\omega:\bfk\calt\ra\bfk\calt,$$
where $B^{+}_\omega(T)$ is the typed angularly decorated planar rooted tree obtained by adding a new root and
an new internal edge decorated by $\omega$ connecting the new root and the root of $T$.
For example,

\[B^+_\omega\Big(\treeoo{\cda[1.5] o\zhd{o/a}
%\node[below] at (o) {$x$};
\node[right] at ($(o)!0.7!(oa)$) {};
}\Big)=
\treeoo{\cda[1.0] o\zhd{o/a}\cdb[1.0]o\zhd{o/b}
%\node[below] at (o) {$x$};
\node[right] at ($(o)!0.7!(oa)$) {};
\node[right] at ($(o)!0.15!(ob)$) {$\omega$};
},\quad
B^{+}_\omega\Big(\stree x\Big)=\treeoo{\cdb[1.5] o\zhd{o/b}\cdlr o
\node[above=1pt] at (o) {$x$};
\node[left] at ($(o)!0.3!(ob)$) {$\omega$};
},\quad
B^{+}_\omega\Big(\XX[scale=1.6] {\xxr{-6}6\xxl66
\node at (-0.5,0.15) {$\alpha$};
\node at (0.45,0.15) {$\beta$};
\xxhu00{x} \xxhu66{z} \xxhu{-6}6{y}
}\Big)=\XX[scale=1.6]{\xxr{-6}6\xxl66\zhongdian{0,0}{0,-1}
\draw(0,0)--(0,-1);%\zhongdian{0,0}{0,-0.5}
\node at (-0.5,0.15) {$\alpha$};
\node at (0.45,0.15) {$\beta$};
\node at (-0.25,-0.25) {$\omega$};
\xxhu00{x} \xxhu66{z} \xxhu{-6}6{y}
}.
\]

\begin{remark}
If $\Omega$ is a singleton set, then typed angularly decorated planar rooted trees
are reduced to angularly decorated planar rooted trees studied by Guo~\cite{Guo09}, who used them to obtain angularly decorated planar rooted forests and to construct free Rota-Baxter algebras.
\end{remark}

The {\bf depth} $\dep{(T)}$ of a rooted tree $T$ is the maximal length of linear chains from the root to the leaves of the tree.
For example,
\[
\dep\big(\treeoo{\cda[1.5] o\zhd{o/a}
%\node[below] at (o) {$x$};
\node[right] at ($(o)!0.7!(oa)$) {};
}\Big) = \dep\Big(\stree x\Big) = 1\, \text{ and } \, \dep\Big(\treeoo{\cdb[1.5] o\zhd{o/b}\cda o\cdb o
\cdx{o}{ol}{150}\cdx{o}{or}{30}
\node[left] at ($(o)!0.5!(oa)$) {$x$};
\node[right] at ($(o)!0.5!(oa)$) {$y$};
\node[left] at ($(o)!0.3!(ob)$) {$\alpha$};
}\Big) = 2.
\]
For later use, we add the "zero-vertex tree" $\vert$ to the picture, and set $\dep(\vert)=0$. Note that the operators $B^+_\omega$ are not defined on $\vert$.
\begin{remark}
For any $T\in \calt\sqcup\{\vert\}$, if $\dep(T) = 0$, then $T = |$. Define the number of branches $\bra(T)$ of $T$ to be $0$ in this case.
Otherwise, $\dep(T)\geq 1$ and $T$ is of the form
$$T = \treeoo{\cdb o\ocdx[2]{o}{a1}{160}{T_1}{left}
\ocdx[2]{o}{a2}{120}{T_2}{above}
\ocdx[2]{o}{a3}{60}{T_n}{above}
\ocdx[2]{o}{a4}{20}{T_{n+1}}{right}
\node at (140:\xch) {$x_1$};
\node at (90:\xch) {$\cdots$};
\node at (40:\xch) {$x_n$};
\node[below] at ($(o)!0.7!(a1)$) {$\alpha_1$};
\node[below] at ($(o)!0.7!(a4)$) {$\alpha_{n+1}$};
\node[right] at ($(o)!0.85!(a2)$) {$\alpha_2$};
\node[left] at ($(o)!0.85!(a3)$) {$\alpha_n\!$};
}\,\text{ with }\, n\geq 0. $$
Here any branch $T_j\in \calt\sqcup\{\vert\}, j=1,\ldots, n+1$ is of depth at most one less than the depth of $T$, and equal to zero if and only if $T_j=\vert$. We define $\bra(T):=n+1$.
For example,
\begin{align*}
 \bra\Big(\treeoo{\cda[1.0] o\zhd{o/a}\cdb[1.0]o\zhd{o/b}
%\node[below] at (o) {$x$};
\node[right] at ($(o)!0.7!(oa)$) {};
\node[right] at ($(o)!0.15!(ob)$) {$\omega$};
}\Big)=1,\,\bra\Big(\stree x\Big) = 2
 \, \text{ and }\,
\bra\Big(\treeoo{\cdb o\cda o
\cdx{o}{ol}{150}\cdx{o}{or}{30}
\node[left] at ($(o)!0.5!(oa)$) {$x$};
\node[right] at ($(o)!0.5!(oa)$) {$y$};
}\Big) = 3.
\end{align*}
\mlabel{re:2form}
\end{remark}

\subsection{A multiplication on $\bfk\calt$}%\mlabel{sub:def}
This subsection is devoted to define a unital multiplication $\diamond$ on $\bfk\calt$,
which is the crucial ingredient to construct a free Rota-Baxter family algebra.\\

Let $X$ be a set and let $\Omega$ be a semigroup. Let $T,T'\in\calt$.
We define $T\diamond T'$ by induction on $\dep(T)+\dep(T')\geq 2$.
For the initial step $\dep(T)+\dep(T')=2$, we have $\dep(T)=\dep(T')=1$ and $T, T'$ are of the form
$$T =\treeoo{\cdb o\cdx[1.5]{o}{a1}{160}
\cdx[1.5]{o}{a2}{120}
\cdx[1.5]{o}{a3}{60}
\cdx[1.5]{o}{a4}{20}
\node at (140:1.2*\xch) {$x_1$};
\node at (90:1.2*\xch) {$\cdots$};
\node at (40:1.2*\xch) {$x_m$};
}\,\text{ and }\,T'=\treeoo{\cdb o\cdx[1.5]{o}{a1}{160}
\cdx[1.5]{o}{a2}{120}
\cdx[1.5]{o}{a3}{60}
\cdx[1.5]{o}{a4}{20}
\node at (140:1.2*\xch) {$y_1$};
\node at (90:1.2*\xch) {$\cdots$};
\node at (40:1.2*\xch) {$y_n$};
},\,\text{ with }\,m,n\geq 0.$$
Define
\begin{equation}
T\diamond T':=\treeoo{\cdb o\cdx[1.5]{o}{a1}{160}
\cdx[1.5]{o}{a2}{120}
\cdx[1.5]{o}{a3}{60}
\cdx[1.5]{o}{a4}{20}
\node at (140:1.2*\xch) {$x_1$};
\node at (90:1.2*\xch) {$\cdots$};
\node at (40:1.2*\xch) {$x_m$};
}\diamond\treeoo{\cdb o\cdx[1.5]{o}{a1}{160}
\cdx[1.5]{o}{a2}{120}
\cdx[1.5]{o}{a3}{60}
\cdx[1.5]{o}{a4}{20}
\node at (140:1.2*\xch) {$y_1$};
\node at (90:1.2*\xch) {$\cdots$};
\node at (40:1.2*\xch) {$y_n$};
}:=\treeoo{\cdb o
\cdx[2.2]{o}{a1}{170}
\cdx[2.2]{o}{a2}{145}
\cdx[2.2]{o}{a3}{115}
\cdx[2.2]{o}{a4}{90}
\cdx[2.2]{o}{a5}{65}
\cdx[2.2]{o}{a6}{35}
\cdx[2.2]{o}{a7}{10}
\node at ($(a1)!0.5!(a2)$) {$x_1$};
\node[rotate=40] at ($(a2)!0.5!(a3)$) {$\cdots$};
\node at ($(a3)!0.5!(a4)$) {$x_m$};
\node at ($(a4)!0.5!(a5)$) {$y_1$};
\node[rotate=-40] at ($(a5)!0.5!(a6)$) {$\cdots$};
\node at ($(a6)!0.5!(a7)$) {$y_n$};
}.
\mlabel{eq:def1}
\end{equation}
For the induction step $\dep(T)+\dep(T')\geq 3$,  the trees $T$ and $T'$ are of the form
$$T=\treeoo{\cdb o\ocdx[2]{o}{a1}{160}{T_1}{left}
\ocdx[2]{o}{a2}{120}{T_2}{above}
\ocdx[2]{o}{a3}{60}{T_m}{above}
%\cdx[2]{o}{a4}{20}
\ocdx[2]{o}{a4}{20}{T_{m+1}}{right}
\node at (140:\xch) {$x_1$};
\node at (90:\xch) {$\cdots$};
\node at (40:\xch) {$x_m$};
\node[below] at ($(o)!0.7!(a1)$) {$\alpha_1$};
\node[below] at ($(o)!0.7!(a4)$) {$\alpha_{m+1}$};
\node[right] at ($(o)!0.85!(a2)$) {$\alpha_2$};
\node[left] at ($(o)!0.85!(a3)$) {$\alpha_m\!$};
}\,\text{ and }\, T'=\treeoo{\cdb o\ocdx[2]{o}{a1}{160}{T'_1}{left}
\ocdx[2]{o}{a2}{120}{T'_2}{above}
\ocdx[2]{o}{a3}{60}{T'_n}{above}
\ocdx[2]{o}{a4}{20}{T'_{n+1}}{right}
\node at (140:\xch) {$y_1$};
\node at (90:\xch) {$\cdots$};
\node at (40:\xch) {$y_n$};
\node[below] at ($(o)!0.7!(a1)$) {$\beta_1$};
\node[below] at ($(o)!0.7!(a4)$) {$\beta_{n+1}$};
\node[right] at ($(o)!0.85!(a2)$) {$\beta_2$};
\node[left] at ($(o)!0.85!(a3)$) {$\beta_n\!$};
}\,\text{ with some }\, T_i\neq |\,\text{ or some}\, T'_j\neq |.$$ There are four cases to consider.

\noindent{\bf Case 1:} $T_{m+1}=|=T'_1$. Define

\begin{equation}
 T\diamond T':= \treeoo{\cdb o\ocdx[2]{o}{a1}{160}{T_1}{left}
\ocdx[2]{o}{a2}{120}{T_2}{above}
\ocdx[2]{o}{a3}{60}{T_m}{above}
\cdx[2]{o}{a4}{20}
%\ocdx[2]{o}{a4}{20}{T_{n+1}}{right}
\node at (140:\xch) {$x_1$};
\node at (90:\xch) {$\cdots$};
\node at (40:\xch) {$x_m$};
\node[below] at ($(o)!0.7!(a1)$) {$\alpha_1$};
%\node[below] at ($(o)!0.7!(a4)$) {$\alpha_{n+1}$};
\node[right] at ($(o)!0.85!(a2)$) {$\alpha_2$};
\node[left] at ($(o)!0.85!(a3)$) {$\alpha_m\!$};
}\diamond \treeoo{\cdb o%\ocdx[2]{o}{a1}{160}{T'_1}{left}
\cdx[2]{o}{a1}{160}
\ocdx[2]{o}{a2}{120}{T'_2}{above}
\ocdx[2]{o}{a3}{60}{T'_n}{above}
\ocdx[2]{o}{a4}{20}{T'_{n+1}}{right}
\node at (140:\xch) {$y_1$};
\node at (90:\xch) {$\cdots$};
\node at (40:\xch) {$y_n$};
%\node[below] at ($(o)!0.7!(a1)$) {$\beta_1$};
\node[below] at ($(o)!0.7!(a4)$) {$\beta_{n+1}$};
\node[right] at ($(o)!0.85!(a2)$) {$\beta_2$};
\node[left] at ($(o)!0.85!(a3)$) {$\beta_n\!$};
}
:=\treeoo{\cdb o
\ocdx[2.5]{o}{a1}{170}{T_1}{170}
\ocdx[2.5]{o}{a2}{145}{T_2}{145}
\ocdx[2.5]{o}{a3}{115}{T_m}{115}
\cdx[2.5]{o}{a4}{90}{-90}
\ocdx[2.5]{o}{a5}{65}{T'_2}{65}
\ocdx[2.5]{o}{a6}{35}{T'_n}{35}
\ocdx[2.5]{o}{a7}{10}{T'_{n+1}}{10}
\node at (157:1.3*\xch) {$x_1$};
\node[rotate=40] at (129:2.5*\xch) {$\cdots$};
\node at (102:1.5*\xch) {$x_m$};
\node at (75:1.5*\xch) {$y_1$};
\node[rotate=-40] at (49:2.5*\xch) {$\cdots$};
\node at (20:1.5*\xch) {$y_n$};
\node[below] at ($(o)!0.7!(a1)$) {$\alpha_1$};
\node[below] at ($(o)!0.8!(a7)$) {\tiny$\beta_{n+1}$};
\node[below] at ($(o)!0.9!(a2)$) {\tiny$\alpha_2$};
\node[left=-1pt] at ($(o)!0.65!(a3)$) {\tiny$\alpha_m$};
\node[right=1pt] at ($(o)!0.65!(a5)$) {\tiny$\beta_2$};
\node[right=1pt] at ($(o)!0.65!(a6)$) {\tiny$\beta_n$};
}.
\mlabel{eq:def3}
\end{equation}

\noindent{\bf Case 2:} $T_{m+1}\neq|=T'_1$. Define
\begin{equation}
T\diamond T':=\treeoo{\cdb o\ocdx[2]{o}{a1}{160}{T_1}{left}
\ocdx[2]{o}{a2}{120}{T_2}{above}
\ocdx[2]{o}{a3}{60}{T_m}{above}
%\cdx[2]{o}{a4}{20}
\ocdx[2]{o}{a4}{20}{T_{m+1}}{right}
\node at (140:\xch) {$x_1$};
\node at (90:\xch) {$\cdots$};
\node at (40:\xch) {$x_m$};
\node[below] at ($(o)!0.7!(a1)$) {$\alpha_1$};
\node[below] at ($(o)!0.7!(a4)$) {$\alpha_{m+1}$};
\node[right] at ($(o)!0.85!(a2)$) {$\alpha_2$};
\node[left] at ($(o)!0.85!(a3)$) {$\alpha_m\!$};
}\diamond \treeoo{\cdb o%\ocdx[2]{o}{a1}{160}{T'_1}{left}
\cdx[2]{o}{a1}{160}
\ocdx[2]{o}{a2}{120}{T'_2}{above}
\ocdx[2]{o}{a3}{60}{T'_n}{above}
\ocdx[2]{o}{a4}{20}{T'_{n+1}}{right}
\node at (140:\xch) {$y_1$};
\node at (90:\xch) {$\cdots$};
\node at (40:\xch) {$y_n$};
%\node[below] at ($(o)!0.7!(a1)$) {$\beta_1$};
\node[below] at ($(o)!0.7!(a4)$) {$\beta_{n+1}$};
\node[right] at ($(o)!0.85!(a2)$) {$\beta_2$};
\node[left] at ($(o)!0.85!(a3)$) {$\beta_n\!$};
}:=\treeoo{\cdb o
\ocdx[2.5]{o}{a1}{170}{T_1}{170}
\ocdx[2.5]{o}{a2}{145}{T_2}{145}
\ocdx[2.5]{o}{a3}{115}{T_m}{115}
\ocdx[2.5]{o}{a4}{90}{T_{m+1}}{90}
\ocdx[2.5]{o}{a5}{65}{T'_2}{65}
\ocdx[2.5]{o}{a6}{35}{T'_n}{35}
\ocdx[2.5]{o}{a7}{10}{T'_{n+1}}{10}
\node at (157:1.3*\xch) {$x_1$};
\node[rotate=40] at (129:2.5*\xch) {$\cdots$};
\node at (102:1.5*\xch) {$x_m$};
\node at (75:1.5*\xch) {$y_1$};
\node[rotate=-40] at (49:2.5*\xch) {$\cdots$};
\node at (20:1.5*\xch) {$y_n$};
\node[below] at ($(o)!0.7!(a1)$) {$\alpha_1$};
\node[below] at ($(o)!0.8!(a7)$) {\tiny$\beta_{n+1}$};
\node[below] at ($(o)!0.9!(a2)$) {\tiny$\alpha_2$};
\node[left=-1pt] at ($(o)!0.65!(a3)$) {\tiny$\alpha_m$};
\node[right=1pt] at ($(o)!0.65!(a5)$) {\tiny$\beta_2$};
\node[right=1pt] at ($(o)!0.65!(a6)$) {\tiny$\beta_n$};
\node[right] at ($(o)!0.8!(a4)$) {\tiny$\alpha_{m+1}$};
}.
\mlabel{eq:def4}
\end{equation}

\noindent{\bf Case 3:} $T_{m+1}=|\neq T'_1$. Define
\begin{equation}
 T\diamond T':=\treeoo{\cdb o\ocdx[2]{o}{a1}{160}{T_1}{left}
\ocdx[2]{o}{a2}{120}{T_2}{above}
\ocdx[2]{o}{a3}{60}{T_m}{above}
\cdx[2]{o}{a4}{20}
%\ocdx[2]{o}{a4}{20}{T_{n+1}}{right}
\node at (140:\xch) {$x_1$};
\node at (90:\xch) {$\cdots$};
\node at (40:\xch) {$x_m$};
\node[below] at ($(o)!0.7!(a1)$) {$\alpha_1$};
%\node[below] at ($(o)!0.7!(a4)$) {$\alpha_{n+1}$};
\node[right] at ($(o)!0.85!(a2)$) {$\alpha_2$};
\node[left] at ($(o)!0.85!(a3)$) {$\alpha_m\!$};
}\diamond \treeoo{\cdb o\ocdx[2]{o}{a1}{160}{T'_1}{left}
\ocdx[2]{o}{a2}{120}{T'_2}{above}
\ocdx[2]{o}{a3}{60}{T'_n}{above}
\ocdx[2]{o}{a4}{20}{T'_{n+1}}{right}
\node at (140:\xch) {$y_1$};
\node at (90:\xch) {$\cdots$};
\node at (40:\xch) {$y_n$};
\node[below] at ($(o)!0.7!(a1)$) {$\beta_1$};
\node[below] at ($(o)!0.7!(a4)$) {$\beta_{n+1}$};
\node[right] at ($(o)!0.85!(a2)$) {$\beta_2$};
\node[left] at ($(o)!0.85!(a3)$) {$\beta_n\!$};
}:=\treeoo{\cdb o
\ocdx[2.5]{o}{a1}{170}{T_1}{170}
\ocdx[2.5]{o}{a2}{145}{T_2}{145}
\ocdx[2.5]{o}{a3}{115}{T_m}{115}
\ocdx[2.5]{o}{a4}{90}{T'_1}{90}
\ocdx[2.5]{o}{a5}{65}{T'_2}{65}
\ocdx[2.5]{o}{a6}{35}{T'_n}{35}
\ocdx[2.5]{o}{a7}{10}{T'_{n+1}}{10}
\node at (157:1.3*\xch) {$x_1$};
\node[rotate=40] at (129:2.5*\xch) {$\cdots$};
\node at (102:1.5*\xch) {$x_m$};
\node at (75:1.5*\xch) {$y_1$};
\node[rotate=-40] at (49:2.5*\xch) {$\cdots$};
\node at (20:1.5*\xch) {$y_n$};
\node[below] at ($(o)!0.7!(a1)$) {$\alpha_1$};
\node[below] at ($(o)!0.8!(a7)$) {\tiny$\beta_{n+1}$};
\node[below] at ($(o)!0.9!(a2)$) {\tiny$\alpha_2$};
\node[left=-1pt] at ($(o)!0.65!(a3)$) {\tiny$\alpha_m$};
\node[right=1pt] at ($(o)!0.65!(a5)$) {\tiny$\beta_2$};
\node[right=1pt] at ($(o)!0.65!(a6)$) {\tiny$\beta_n$};
\node[right] at ($(o)!0.8!(a4)$) {\tiny$\beta_{1}$};
} .
\mlabel{eq:def5}
\end{equation}

\noindent{\bf Case 4:} $T_{m+1}\neq|\neq T'_1$. Define
{\small{\begin{align}
 T\diamond T'
:=& \ \treeoo{\cdb o\ocdx[2]{o}{a1}{160}{T_1}{left}
\ocdx[2]{o}{a2}{120}{T_2}{above}
\ocdx[2]{o}{a3}{60}{T_m}{above}
\ocdx[2]{o}{a4}{20}{T_{m+1}}{right}
\node at (140:\xch) {$x_1$};
\node at (90:\xch) {$\cdots$};
\node at (40:\xch) {$x_m$};
\node[below] at ($(o)!0.7!(a1)$) {$\alpha_1$};
\node[below] at ($(o)!0.7!(a4)$) {$\alpha_{m+1}$};
\node[right] at ($(o)!0.85!(a2)$) {$\alpha_2$};
\node[left] at ($(o)!0.85!(a3)$) {$\alpha_m\!$};
}\diamond \treeoo{\cdb o\ocdx[2]{o}{a1}{160}{T'_1}{left}
\ocdx[2]{o}{a2}{120}{T'_2}{above}
\ocdx[2]{o}{a3}{60}{T'_n}{above}
\ocdx[2]{o}{a4}{20}{T'_{n+1}}{right}
\node at (140:\xch) {$y_1$};
\node at (90:\xch) {$\cdots$};
\node at (40:\xch) {$y_n$};
\node[below] at ($(o)!0.7!(a1)$) {$\beta_1$};
\node[below] at ($(o)!0.7!(a4)$) {$\beta_{n+1}$};
\node[right] at ($(o)!0.85!(a2)$) {$\beta_2$};
\node[left] at ($(o)!0.85!(a3)$) {$\beta_n\!$};
}\nonumber\\
:&=\Biggl(\treeoo{\cdb o\ocdx[2]{o}{a1}{160}{T_1}{left}
\ocdx[2]{o}{a2}{120}{T_2}{above}
\ocdx[2]{o}{a3}{60}{T_m}{above}
\cdx[2]{o}{a4}{20}%{T_{n+1}}{right}
\node at (140:\xch) {$x_1$};
\node at (90:\xch) {$\cdots$};
\node at (40:\xch) {$x_m$};
\node[below] at ($(o)!0.7!(a1)$) {$\alpha_1$};
%\node[below] at ($(o)!0.7!(a4)$) {$\alpha_{n+1}$};
\node[right] at ($(o)!0.85!(a2)$) {$\alpha_2$};
\node[left] at ($(o)!0.85!(a3)$) {$\alpha_m\!$};
}\diamond \Big(B^{+}_{\alpha_{m+1}}(T_{m+1})\diamond B^{+}_{\beta_1}(T'_1)\Big)\Biggl)\diamond
\treeoo{\cdb o\cdx[2]{o}{a1}{160}%{T'_1}{left}
\ocdx[2]{o}{a2}{120}{T'_2}{above}
\ocdx[2]{o}{a3}{60}{T'_n}{above}
\ocdx[2]{o}{a4}{20}{T'_{n+1}}{right}
\node at (140:\xch) {$y_1$};
\node at (90:\xch) {$\cdots$};
\node at (40:\xch) {$y_n$};
%\node[below] at ($(o)!0.7!(a1)$) {$\beta_1$};
\node[below] at ($(o)!0.7!(a4)$) {$\beta_{n+1}$};
\node[right] at ($(o)!0.85!(a2)$) {$\beta_2$};
\node[left] at ($(o)!0.85!(a3)$) {$\beta_n\!$};
}\nonumber\\
:=& \ \Biggl(\treeoo{\cdb o\ocdx[2]{o}{a1}{160}{T_1}{left}
\ocdx[2]{o}{a2}{120}{T_2}{above}
\ocdx[2]{o}{a3}{60}{T_m}{above}
\cdx[2]{o}{a4}{20}
%\ocdx[2]{o}{a4}{20}{T_{n+1}}{right}
\node at (140:\xch) {$x_1$};
\node at (90:\xch) {$\cdots$};
\node at (40:\xch) {$x_m$};
\node[below] at ($(o)!0.7!(a1)$) {$\alpha_1$};
%\node[below] at ($(o)!0.7!(a4)$) {$\alpha_{n+1}$};
\node[right] at ($(o)!0.85!(a2)$) {$\alpha_2$};
\node[left] at ($(o)!0.85!(a3)$) {$\alpha_m\!$};
}\diamond B^{+}_{\alpha_{m+1}\beta_1}\Big(B^{+}_{\alpha_{m+1}}(T_{m+1})\diamond T'_1
+T_{m+1}\diamond B^{+}_{\beta_1}(T'_1)+\lambda T_{m+1}\diamond T'_1\Big)\Biggl)\diamond\treeoo{\cdb o%\ocdx[2]{o}{a1}{160}{T'_1}{left}
\cdx[2]{o}{a1}{160}
\ocdx[2]{o}{a2}{120}{T'_2}{above}
\ocdx[2]{o}{a3}{60}{T'_n}{above}
\ocdx[2]{o}{a4}{20}{T'_{n+1}}{right}
\node at (140:\xch) {$y_1$};
\node at (90:\xch) {$\cdots$};
\node at (40:\xch) {$y_n$};
%\node[below] at ($(o)!0.7!(a1)$) {$\beta_1$};
\node[below] at ($(o)!0.7!(a4)$) {$\beta_{n+1}$};
\node[right] at ($(o)!0.85!(a2)$) {$\beta_2$};
\node[left] at ($(o)!0.85!(a3)$) {$\beta_n\!$};
}.
\mlabel{eq:def6}
\end{align}}}
\noindent Here the first $\diamond$ is defined by Case 3, the second, third and fourth $\diamond$ are defined by induction and the last $\diamond$ is defined by Case 2.
This completes the inductive definition of the multiplication $\diamond$ on $\calt$.
Extending by linearity, we can expand the $\diamond$ to $\bfk\calt$. Let us expose an example for better understanding.

\begin{exam}
Let $X$ be a set and let $\Omega$ be a semigroup. For $x,y\in X$ and $\alpha,\beta\in\Omega,$ we have
\begin{align*}
\stree x\diamond \stree y=& \ \treeoo{\cdb o\cda o
\cdx{o}{ol}{150}\cdx{o}{or}{30}
\node[left] at ($(o)!0.5!(oa)$) {$x$};
\node[right] at ($(o)!0.5!(oa)$) {$y$};
%\node[left] at ($(o)!0.5!(ob)$) {$\alpha$};
}\quad(\text{by Eq.~(\mref{eq:def1})}),\\
\treeoo{\cdb[1.5] o\zhd{o/b}\cdlr o
\node[above=1pt] at (o) {$x$};
\node[left] at ($(o)!0.3!(ob)$) {$\alpha$};
}\diamond \stree y=& \ \treeoo{\cdb o\cdlr{o,ol}
\node[above=1pt] at (ol) {$x$};
\node[above=1pt] at (o) {$y$};
\node[left=1pt] at ($(o)!0.3!(ol)$) {$\alpha$};
},\quad
\stree x\diamond \treeoo{\cdb[1.5] o\zhd{o/b}\cdlr o
\node[above=1pt] at (o) {$y$};
\node[left] at ($(o)!0.3!(ob)$) {$\alpha$};
}= \treeoo{\cdb o\cdlr{o,or}
\node[above=1pt] at (or) {$y$};
\node[above=1pt] at (o) {$x$};
\node[right] at ($(o)!0.3!(or)$) {$\alpha$};
}\quad(\text{by Eq.~(\mref{eq:def4}) and Eq.~(\mref{eq:def5})}),\\
\treeoo{\cdb[1.5] o\zhd{o/b}\cdlr o
\node[above=1pt] at (o) {$x$};
\node[left] at ($(o)!0.3!(ob)$) {$\alpha$};
}\diamond \treeoo{\cdb[1.5] o\zhd{o/b}\cdlr o
\node[above=1pt] at (o) {$y$};
\node[left] at ($(o)!0.3!(ob)$) {$\beta$};
}=& \ B^{+}_\alpha\Big(\stree x\Big)\diamond B^{+}_\beta\Big(\stree y\Big)\\
=& \ B^{+}_{\alpha\beta}\Big(B^{+}_\alpha(\stree x)\diamond \stree y+\stree x\diamond B^{+}_\beta\Big(\stree y\Big)+\lambda \stree x\diamond \stree y\Big)\\
& \ \hspace{3cm}\text{(by Eq.~(\mref{eq:def6}))}\\
=& \ B^{+}_{\alpha\beta}\Big(\treeoo{\cdb[1.5] o\zhd{o/b}\cdlr o
\node[above=1pt] at (o) {$x$};
\node[left] at ($(o)!0.3!(ob)$) {$\alpha$};
}\diamond \stree y+\stree x\diamond \treeoo{\cdb[1.5] o\zhd{o/b}\cdlr o
\node[above=1pt] at (o) {$y$};
\node[left] at ($(o)!0.3!(ob)$) {$\beta$};
}+\lambda \stree x\diamond \stree y\Big)\\
=& \ B^{+}_{\alpha\beta}\Big(\treeoo{\cdb o\cdlr{o,ol}
\node[above=1pt] at (ol) {$x$};
\node[above=1pt] at (o) {$y$};
\node[left=1pt] at ($(o)!0.3!(ol)$) {$\alpha$};
}+\treeoo{\cdb o\cdlr{o,or}
\node[above=1pt] at (or) {$y$};
\node[above=1pt] at (o) {$x$};
\node[right] at ($(o)!0.3!(or)$) {$\beta$};
}+\lambda \treeoo{\cdb o\cda o
\cdx{o}{ol}{150}\cdx{o}{or}{30}
\node[left] at ($(o)!0.5!(oa)$) {$x$};
\node[right] at ($(o)!0.5!(oa)$) {$y$};
%\node[left] at ($(o)!0.5!(ob)$) {$\alpha$};
}\Big)\quad(\text{by Eqs.~(\mref{eq:def4}),~(\mref{eq:def5}) and ~(\mref{eq:def1})})\\
=& \ \treeoo{\cdb[1.5] o\zhd{o/b}\cdlr{o,ol}
\node[above=1pt] at (ol) {$x$};
\node[above=1pt] at (o) {$y$};
\node[left=1pt] at ($(o)!0.3!(ol)$) {$\alpha$};
\node[left] at ($(o)!0.3!(ob)$) {$\alpha\beta$};
}
+\treeoo{\cdb[1.5] o\zhd{o/b}\cdlr{o,or}
\node[above=1pt] at (or) {$y$};
\node[above=1pt] at (o) {$x$};
\node[right] at ($(o)!0.3!(or)$) {$\beta$};
\node[left] at ($(o)!0.3!(ob)$) {$\alpha\beta$};
}+\lambda \,\treeoo{\cdb[1.5] o\zhd{o/b}\cda o
\cdx{o}{ol}{150}\cdx{o}{or}{30}
\node[left] at ($(o)!0.5!(oa)$) {$x$};
\node[right] at ($(o)!0.5!(oa)$) {$y$};
\node[left] at ($(o)!0.3!(ob)$) {$\alpha\beta$};
%\node[left] at ($(o)!0.5!(ob)$) {$\alpha$};
}.
\end{align*}
\end{exam}

\begin{remark}
The unit for the product $\diamond$ is given by the tree $\treeoo{\cda[1.5] o\zhd{o/a}
\node[right] at ($(o)!0.7!(oa)$) {};
}$.
\end{remark}
\subsection{Construction of free RBFA}
In this subsection, we prove that $\bfk\calt$ is the free Rota-Baxter family algebra of weight $\lambda$ generated by $X$,
via a bijection between $\calt$ and the set of Rota-Baxter family bracketed words.
The latter is a subset of bracketed words~\mcite{BoCh,Guo09} and is used to construct free Rota-Baxter family algebras~\cite{ZG}.\\

Denote by $M(X)$ the free monoid generated by $X$. Let $\Omega$ be a set.
For any set $Y$ and $\omega\in \Omega$, let $\lfloor Y \rfloor_{\omega}$
denote the set $\big\{\lfloor y\rfloor_{\omega}\mid y\in Y\big\}$.
So $\lfloor Y \rfloor_{\omega}$ is a disjoint copy of $Y$.
Assume the sets $\lfloor Y \rfloor_{\omega}$ to be disjoint with each other when $\omega$ varies in $\Omega$. We use induction to define a direct system $\big\{\frak{M_{n}}:=\frak{M_{n}}(\Omega, X), i_{n,\, n+1}: \frak{M_{n}} \rightarrow \frak{M}_{n+1}\big\}_{n\geq 1}$ of free monoids. First, define
\begin{equation*}
\frak{M}_{1}:=M(X)\,\text{ and }\, \mathfrak{M}_{2}:= M\bigl(X\sqcup (\sqcup_{\omega\, \in \Omega}\lfloor \frak{M}_{1} \rfloor_{\omega})\bigr),
\mlabel{eq:inclu1}
\end{equation*}
with $i_{1,\,2}$ being the inclusion
\begin{equation*}
i_{1,\,2}: \frak{M}_{1}=M(X)\hookrightarrow  \frak{M}_{2}=M\bigl(X\sqcup (\sqcup_{\omega\,\in \Omega}\lfloor \frak{M}_{1}\rfloor_{\omega})\bigr).
\end{equation*}
Inductively assume that $\frak{M}_{n-1}$ has been defined for $n \geq 3,$ with the inclusion
\begin{equation}
i_{n-2,\,n-1}: \frak{M}_{n-2}\rightarrow \frak{M}_{n-1}.
\mlabel{eq:inclu2}
\end{equation}
Then, define
\begin{equation*}
\frak{M}_{n}:=M\bigl(X\sqcup(\sqcup_{\omega\,\in \Omega}\lfloor \frak{M}_{n-1}\rfloor_{\omega})\bigr).
%\mlabel{eq:inclu3}
\end{equation*}
The inclusion in Eq.~(\mref{eq:inclu2}) induces the inclusion
$$\lfloor \frak{M}_{n-2}\rfloor_{\omega}\rightarrow \lfloor \frak{M}_{n-1}\rfloor_{\omega},\,\text{ for each }\,\omega\in \Omega,$$
and generates an inclusion  of free monoids
$$i_{n-1,\,n}: \frak{M}_{n-1}=M\bigl(X\sqcup(\sqcup_{\omega \,\in \Omega}\,\lfloor \frak{M}_{n-2}\rfloor_{\omega})\bigr)
\hookrightarrow M\bigl(X\sqcup(\sqcup_{\omega \,\in \Omega}\,\lfloor \frak{M}_{n-1}\rfloor_{\omega})\bigr)=\frak{M}_{n}.$$
This completes the inductive
construction of the direct systems. Finally, define the direct limit of monoids
$$ \frak{M}(\Omega,\, X) :=\lim_{\longrightarrow}\frak{M}_{n}=\bigcup_{n\geq 0}\frak{M}_{n}$$
with identity $1$. An element in $ \frak{M}(\Omega,\, X)$ is called a {\bf bracketed word}
and is said to be of {\bf depth} $n$ if it is in $\frak{M}_{n}\backslash \frak{M}_{n-1}$.
For each $u\in \mathfrak{M}(\Omega,\,X)$ with $u\neq 1$, we may write $u$ as a product $u_1\cdots u_k$ uniquely for some $k$ with $u_i\in X\sqcup (\sqcup_{\omega\in \Omega}\lf \mathfrak{M}(\Omega,\,X)\rf_\omega)$ for $1\leq i\leq k.$ We call $k$ the {\bf breadth} of $u$ and denote it by $|u|.$ If $u=1,$ we define $|u|:=0.$
%Elements in $\fram$ are called {\bf bracketed words} on $X$.
For each $\omega\in\Omega$, we introduce the linear operator
$$P_\omega:\fram\ra\fram, \quad u \mapsto \lf u\rf_\omega.$$
Let $Y,Z$ be the subsets of $\fram$. First define  the {\bf alternating products} of $Y$ and $Z$ by
\begin{align*}
\Lambda(Y,Z):=& \ \Big(\bigcup_{\substack{r\geq 1,\\ \omega\in\Omega}}(Y\sqcup_{\omega\in\Omega}\lf Z\rf_\omega)^{r}\Big)\,\bigcup\,\Big(\bigcup_{\substack{r\geq 0,\\ \omega\in\Omega}}(Y\sqcup_{\omega\in\Omega}\lf Z\rf_\omega)^{r}Y\Big)\,\bigcup\,\Big(\bigcup_{\substack{r\geq 1, \\ \omega\in\Omega}}(\sqcup_{\omega\in\Omega}\lf Z\rf_\omega Y)^{r}\Big)\\%\mlabel{eq:rbw}\\
& \ \bigcup\,\Big(\bigcup_{\substack{r\geq 0,\\ \omega\in\Omega}}(\sqcup_{\omega\in\Omega}\lf Z\rf_\omega Y)^{r}\sqcup_{\omega\in\Omega}\lf Z\rf_\omega\Big)\,\bigcup\,\{1\}%\nonumber
\end{align*}
where $1$ is the identity in $\fram$. Obviously, $\Lambda(Y,Z)\subseteq \fram$. Then define recursively
\begin{equation*}
\frak{X}_0:=M(X)\,\text{and}\,\frak{X}_n:=\Lambda\bigl(M(X),\frak{X}_{n-1}\bigr), n\geq 1.
\end{equation*}
Thus $\frak{X}_0\subseteq\cdots\subseteq\frak{X}_n\subseteq\cdots$ and we may define
$$\frak{X}_\infty:=\underset{\longrightarrow}\lim\frak{X}_n=\cup_{n\geq 0}\frak{X}_n.$$
Elements in $\frak{X}_\infty$ are called {\bf Rota-Baxter family bracketed words}. Denote by $S$ the following vector subspace of $\bfk\frakM(\Omega,\,X)$:
\begin{equation*}
S :=\left\{\lf x\rf_\alpha\lf y\rf_\beta-\lf\lf x\rf_\alpha y\rf_{\alpha\beta}-\lf x\lf y\rf_\beta\rf_{\alpha\beta}-\lambda\lf xy\rf_{\alpha\beta}\mid \alpha, \beta\in \Omega, x, y\in \frakM(\Omega,\,X)\right\}.
\end{equation*}

\begin{lemma}\mlabel{lem:GS}\cite{ZG}
Let $X$ be a set and let $\Omega$ be a semigroup.
\begin{enumerate}

\item $S$ is a Gr\"{o}bner-Shirshov basis in $\bfk\frak{M}(\Omega,\, X)$ with respect to a monomial order.\mlabel{it:gs}

\item The set $\frak{X}_\infty$ is a \bfk-basis of the free Rota-Baxter family algebra $(\bfk\frak{X}_\infty,\dw,(\lf\,\rf_\omega)_{\omega\in\Omega})=\bfk\mathfrak{M}(\Omega,\,X)/\Id(S)$ of weight $\lambda$,
where  $\Id(S)$ is the operated ideal generated by $S$ in $\bfk\mathfrak{M}(\Omega,\,X)$.\mlabel{it:ideal}
\end{enumerate}
\end{lemma}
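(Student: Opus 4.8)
The statement is quoted from \mcite{ZG}; the plan is to reprove it with the Composition--Diamond Lemma for operated algebras \mcite{BoCh,Guo09}, which fuses the two assertions into one: once $S$ is shown to be a Gr\"obner--Shirshov basis, the set of $S$-irreducible bracketed words is automatically a $\bfk$-basis of the quotient $\bfk\frak{M}(\Omega,\,X)/\Id(S)$. First I would fix a monomial order $\leq$ on $\frak{M}(\Omega,\,X)$, compatible with concatenation and with each operator $P_\omega=\lf\,\rf_\omega$, for which the leading monomial of every generator
\[
s_{\alpha,\beta,x,y}=\lf x\rf_\alpha\lf y\rf_\beta-\lf\lf x\rf_\alpha y\rf_{\alpha\beta}-\lf x\lf y\rf_\beta\rf_{\alpha\beta}-\lambda\lf xy\rf_{\alpha\beta}\in S
\]
is the ``side-by-side'' word $\lf x\rf_\alpha\lf y\rf_\beta$. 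Such an order exists (it is the one of \mcite{ZG}): one ranks bracketed words first by their number of letters from $X$, and then by a secondary statistic that penalizes adjacent top-level brackets, so that $\lf x\rf_\alpha\lf y\rf_\beta$ dominates the three nested words on the right, all of which carry the same $X$-content.

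Granting the order, the next step is to enumerate the compositions of pairs of elements of $S$ and to verify that each reduces to zero modulo $S$. Since every leading monomial is a product of exactly two bracketed letters, the only intersection composition at the outermost level comes from three consecutive brackets, i.e. from $\lf x\rf_\alpha\lf y\rf_\beta\lf z\rf_\gamma$, where one may rewrite the left pair first or the right pair first. The remaining compositions are of inclusion type, arising when a leading monomial $\lf u\rf_\gamma\lf v\rf_\delta$ occurs inside an outer bracket $\lf\,\cdots\,\rf_\omega$ or inside the content $x$ or $y$ of another relation. I would dispose of these uniformly by noting that $\leq$ is stable under the operations, so each such composition is the image, under a context, of the triple composition, and hence is trivial as soon as the latter is.

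The heart of the matter, and the step I expect to be the main obstacle, is therefore the triple-product intersection composition. Reducing $\lf x\rf_\alpha\lf y\rf_\beta\lf z\rf_\gamma$ in the two possible ways and iterating the rewriting rule yields, in each case, a $\lambda$-weighted sum of fully nested words with a single outer bracket $\lf\,\rf_{\alpha\beta\gamma}$; associativity of the semigroup $\Omega$ guarantees that all resulting subscripts collapse to $\alpha\beta\gamma$. Checking that the two sums agree is a finite but delicate bookkeeping whose content is exactly the associativity of the product prescribed by the Rota--Baxter family axiom, Eq.~(\mref{eq:RBF}), applied to the triple $(x,y,z)$. This is precisely where semigroup associativity of $\Omega$ enters and where the computation is least formal; once it is settled, the Composition--Diamond Lemma delivers assertion (a).

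For assertion (b) I would identify the normal forms. A bracketed word is $S$-irreducible exactly when it has no factor $\lf u\rf_\alpha\lf v\rf_\beta$, that is, no two adjacent bracketed letters at any level, equivalently when its top-level factors alternate between elements of $M(X)$ and single bracketed letters whose contents are themselves irreducible. Unwinding this recursive description reproduces precisely the inductive construction $\frak{X}_0=M(X)$, $\frak{X}_n=\Lambda\bigl(M(X),\frak{X}_{n-1}\bigr)$, so that $\irr(S)=\frak{X}_\infty$. By the Composition--Diamond Lemma the image of $\irr(S)$ is then a $\bfk$-basis of $\bfk\frak{M}(\Omega,\,X)/\Id(S)$. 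Finally, since $\bfk\frak{M}(\Omega,\,X)$ is the free operated algebra on $X$ for the operator family $(\lf\,\rf_\omega)_{\omega\in\Omega}$ and $S$ is exactly the family of defining relations of a Rota--Baxter family of weight $\lambda$, the universal property of the quotient identifies $\bfk\frak{X}_\infty=\bfk\frak{M}(\Omega,\,X)/\Id(S)$ with the free Rota--Baxter family algebra on $X$, carrying the induced product $\dw$ and operators $(\lf\,\rf_\omega)_{\omega\in\Omega}$; this is (b).
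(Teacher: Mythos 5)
The paper itself contains no proof of this lemma: it is imported verbatim from \mcite{ZG}, and the Gr\"obner--Shirshov/Composition--Diamond strategy you adopt is precisely the one used there, so your overall route is the intended one. But as a standalone argument your proposal has two genuine gaps. First, the monomial order is asserted rather than constructed: all four monomials of $s_{\alpha,\beta,x,y}$ have the same $X$-content, and the two mixed nested terms $\lf\lf x\rf_\alpha y\rf_{\alpha\beta}$ and $\lf x\lf y\rf_\beta\rf_{\alpha\beta}$ even have the same total number of brackets as $\lf x\rf_\alpha\lf y\rf_\beta$ and never smaller (generically strictly larger) depth, so naive degree- or depth-based statistics make a \emph{nested} word the leading monomial and the whole rewriting picture collapses. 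Your ``secondary statistic penalizing adjacent top-level brackets'' must therefore be proved to be a well order compatible with left and right concatenation and with every operator $\lf\,\rf_\omega$; this is a delicate construction (carried out in \mcite{ZG}, following \mcite{BoCh}), not a remark. Second, and more importantly, you explicitly defer the verification that the two reductions of $\lf x\rf_\alpha\lf y\rf_\beta\lf z\rf_\gamma$ agree modulo $S$; that finite computation is the entire mathematical content of assertion (a), so as written the proposal records where the proof lives rather than giving it.

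A smaller but real inaccuracy: your uniform dismissal of the inclusion compositions is wrong as stated. When a leading word $\lf u\rf_\gamma\lf v\rf_\delta$ occurs inside the slot $x$ or $y$ of another instance of the relation, the resulting ambiguity is \emph{not} the image of the triple composition under a context; its triviality follows instead from the fact that $S$ is closed under substitution of arbitrary bracketed words into the parameters $x,y$ (it is a family of operated polynomial identities), together with compatibility of the order with contexts --- an easy but separate check. Your treatment of (b) is sound: the $S$-irreducible words are exactly those containing no two adjacent bracketed letters at any level, this recursive description reproduces $\frak{X}_n=\Lambda\bigl(M(X),\frak{X}_{n-1}\bigr)$ so that $\mathrm{Irr}(S)=\frak{X}_\infty$, and the identification of $\bfk\frak{M}(\Omega,\,X)/\Id(S)$ with the free Rota--Baxter family algebra of weight $\lambda$ via the universal property of the free operated algebra is exactly as in \mcite{ZG}.
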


We are going to erect a one-one correspondence between $\bfk\calt$ and $\bfk\frak{X}_\infty$.
Let us first expose some examples in the following table:
\begin{table}[!htb]
\centering
\begin{tabular}{|c|c|}
  \hline
  % after \\: \hline or \cline{col1-col2} \cline{col3-col4} ...
  \text{Typed angularly decorated planar rooted trees $\calt$} & $\frak{X}_\infty$ \\
  \hline
  $\treeoo{\cda[1.5] o\zhd{o/a}
\node[right] at ($(o)!0.7!(oa)$) {};
}$ & $1$ \\
 \hline
\,$\treeoo{\cda[1.0] o\zhd{o/a}\cdb[1.0]o\zhd{o/b}
\node[right] at ($(o)!0.7!(oa)$) {};
\node[right] at ($(o)!0.2!(ob)$) {$\omega$};
}$ & $\lf 1\rf_\omega$\\
\hline
   $\stree x$ & $x$ \\
   \hline
 $\treeoo{\cdb o\zhd{o/b}\cdlr o
\node[above=1pt] at (o) {$x$};
\node[left] at ($(o)!0.3!(ob)$) {$\omega$};
}$ & $\lf x\rf_\omega$ \\
\hline
  $\treeoo{\cdb o\cda o%\cdb o
\cdx{o}{ol}{150}\cdx{o}{or}{30}
\node[left] at ($(o)!0.5!(oa)$) {$x$};
\node[right] at ($(o)!0.5!(oa)$) {$y$};
%\node[left] at ($(o)!0.5!(ob)$) {$\alpha$};
}$ & $xy$ \\
\hline
$\treeoo{\cdb o\zhd{o/b}\cda o\cdb o
\cdx{o}{ol}{150}\cdx{o}{or}{30}
\node[left] at ($(o)!0.5!(oa)$) {$x$};
\node[right] at ($(o)!0.5!(oa)$) {$y$};
\node[left] at ($(o)!0.3!(ob)$) {$\omega$};
}$ & $\lf xy\rf_\omega$\\
\hline
\end{tabular}
%\caption{1}
\label{1}
\end{table}

\noindent Define a linear map
$$\phi:\bfk\calt\ra \bfk\frak{X}_\infty, \quad T\mapsto \phi(T)$$
by induction on $\dep(T)\geq 1$.
If $\dep(T)=1$, then $T=\treeoo{\cdb o\cdx[1.5]{o}{a1}{160}
\cdx[1.5]{o}{a2}{120}
\cdx[1.5]{o}{a3}{60}
\cdx[1.5]{o}{a4}{20}
\node at (140:1.2*\xch) {$x_1$};
\node at (90:1.2*\xch) {$\cdots$};
\node at (40:1.2*\xch) {$x_n$};
}$ with $n\geq 0$ and define
\begin{equation}
%\phi(\stree x):=x\,\text{ and }\,
\phi(T):=
\phi(\treeoo{\cdb o\cdx[1.5]{o}{a1}{160}
\cdx[1.5]{o}{a2}{120}
\cdx[1.5]{o}{a3}{60}
\cdx[1.5]{o}{a4}{20}
\node at (140:1.2*\xch) {$x_1$};
\node at (90:1.2*\xch) {$\cdots$};
\node at (40:1.2*\xch) {$x_n$};
}):=x_1\cdots x_n,
\mlabel{eq:df0}
\end{equation}
with $x_1\cdots x_n:=1$ in the case $n=0$. For the induction step  $\dep(T)\geq 2$, $T$ is the form $$\treeoo{\cdb o\ocdx[2]{o}{a1}{160}{T_1}{left}
\ocdx[2]{o}{a2}{120}{T_2}{above}
\ocdx[2]{o}{a3}{60}{T_n}{above}
\ocdx[2]{o}{a4}{20}{T_{n+1}}{right}
\node at (140:\xch) {$x_1$};
\node at (90:\xch) {$\cdots$};
\node at (40:\xch) {$x_n$};
\node[below] at ($(o)!0.7!(a1)$) {$\alpha_1$};
\node[below] at ($(o)!0.7!(a4)$) {$\alpha_{n+1}$};
\node[right] at ($(o)!0.85!(a2)$) {$\alpha_2$};
\node[left] at ($(o)!0.85!(a3)$) {$\alpha_n\!$};
}\,\text{ with some }\, T_i\neq |.$$
Then we define $\phi(T)$ by the induction on $\bra(T)\geq 1$. For the initial step  $\bra(T)=1$, we have $T= \treeoo{\zhd{o/a}
\node[right] at ($(o)!0.8!(oa)$) {$\alpha_1$};\ocdx[1.5]{o}{a1}{90}{T_1}{above}
}$ and define
\begin{equation}
\phi\Big(\treeoo{\zhd{o/a}
\node[right] at ($(o)!0.8!(oa)$) {$\alpha_1$};\ocdx[1.5]{o}{a1}{90}{T_1}{above}
}\Big):=\phi(B^{+}_{\alpha_1}(T_1)):=\lf \phi(T_1)\rf_{\alpha_1}.
\mlabel{eq:df1}
\end{equation}
For the induction step $\bra(T)\geq 2$, there are two cases to consider.

\noindent {\bf Case 1:} $T_1=|$. Define
\begin{equation}
\phi(T):=x_1\phi\Big(\treeoo{\cdb o\ocdx[2]{o}{a1}{160}{T_2}{left}
\ocdx[2]{o}{a2}{120}{T_3}{above}
\ocdx[2]{o}{a3}{60}{T_n}{above}
\ocdx[2]{o}{a4}{20}{T_{n+1}}{right}
\node at (140:\xch) {$x_2$};
\node at (90:\xch) {$\cdots$};
\node at (40:\xch) {$x_n$};
\node[below] at ($(o)!0.7!(a1)$) {$\alpha_2$};
\node[below] at ($(o)!0.7!(a4)$) {$\alpha_{n+1}$};
\node[right] at ($(o)!0.85!(a2)$) {$\alpha_3$};
\node[left] at ($(o)!0.85!(a3)$) {$\alpha_n\!$};
}\Big).
\mlabel{eq:df2}
\end{equation}
\noindent {\bf Case 2:} $T_1\neq |$. Define
 \begin{equation}
 \phi(T):=\lf \phi(T_1)\rf_{\alpha_1}x_1 \phi\Big(\treeoo{\cdb o\ocdx[2]{o}{a1}{160}{T_2}{left}
\ocdx[2]{o}{a2}{120}{T_3}{above}
\ocdx[2]{o}{a3}{60}{T_n}{above}
\ocdx[2]{o}{a4}{20}{T_{n+1}}{right}
\node at (140:\xch) {$x_2$};
\node at (90:\xch) {$\cdots$};
\node at (40:\xch) {$x_n$};
\node[below] at ($(o)!0.7!(a1)$) {$\alpha_2$};
\node[below] at ($(o)!0.7!(a4)$) {$\alpha_{n+1}$};
\node[right] at ($(o)!0.85!(a2)$) {$\alpha_3$};
\node[left] at ($(o)!0.85!(a3)$) {$\alpha_n\!$};
}\Big).
\mlabel{eq:df3}
\end{equation}

Conversely, we are going to define a linear map
$$\psi:\bfk\frak{X}_\infty\ra \bfk\calt, \quad w\mapsto \psi(w)$$
by induction on $\dep(w)\geq 1$. If $\dep(w) = 1$, then $w=x_1\cdots x_n\in M(X)$ with $n\geq 0$ and define
\begin{equation}
 \psi(x_1\cdots x_n):=\treeoo{\cdb o\cdx[1.5]{o}{a1}{160}
\cdx[1.5]{o}{a2}{120}
\cdx[1.5]{o}{a3}{60}
\cdx[1.5]{o}{a4}{20}
\node at (140:1.2*\xch) {$x_1$};
\node at (90:1.2*\xch) {$\cdots$};
\node at (40:1.2*\xch) {$x_n$};
}\,\text{ with the convention that }\, \psi(1):=\treeoo{\cda[1.5] o\zhd{o/a}
\node[right] at ($(o)!0.7!(oa)$) {};
}\,\text{ when }\, n=0.
\mlabel{eq:df4}
\end{equation}
If $\dep(w)\geq 1$, we apply induction on $\bre(w)\geq 1$.
Write $w=w_1\cdots w_n$ with $\bre(w) = n\geq 1$.

\noindent If $\bre(w)=1$, then $w=\lf\lbar{w}\rf_{\alpha}$ for $\lbar{w}\in\frak{X}_\infty$ and $\alpha\in\Omega$ by $\dep(w)\geq 1$, and define
\begin{equation}
\psi(w):=\psi(\lf\lbar{w}\rf_{\alpha}):=B^+_{\alpha}(\psi(\lbar{w})).
\mlabel{eq:df}
\end{equation}
If $\bre{(w)}\geq 2,$ then define
\begin{equation}
\psi(w):=\psi(w_1)\diamond \psi(w_2\cdots w_n).
\mlabel{eq:dff}
\end{equation}
This completes the definitions of $\phi$ and $\psi$.

\begin{prop}\mlabel{prop:iso}
We have $\psi\circ \phi = \id$ and $\phi\circ \psi =\id$.
\end{prop}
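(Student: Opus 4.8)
The plan is to prove that $\psi$ and $\phi$ are mutually inverse by establishing the two equations $\psi\circ\phi=\id$ and $\phi\circ\psi=\id$ separately, each by a double induction that mirrors the double-induction structure used to define the maps: an outer induction on depth and an inner induction on breadth (resp.\ number of branches). Since both $\phi$ and $\psi$ are defined by induction first on $\dep$ and then on $\bra$ (resp.\ $\bre$), the natural strategy is to check that $\psi$ undoes each defining clause of $\phi$ and vice versa. It suffices to verify the identities on the basis elements, namely on trees $T\in\calt$ for $\psi\circ\phi=\id$ and on Rota-Baxter family bracketed words $w\in\frak{X}_\infty$ for $\phi\circ\psi=\id$, since both composites are linear.

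For $\psi\circ\phi=\id$, I would argue by induction on $\dep(T)$. \emph{Base case:} if $\dep(T)=1$, then $T$ is a corolla with angular decorations $x_1,\dots,x_n$, so $\phi(T)=x_1\cdots x_n$ by Eq.~\eqref{eq:df0}, and $\psi(x_1\cdots x_n)$ returns exactly this corolla by Eq.~\eqref{eq:df4}. \emph{Inductive step:} for $\dep(T)\geq 2$ I would run a secondary induction on $\bra(T)$. When $\bra(T)=1$, we have $T=B^+_{\alpha_1}(T_1)$; then $\phi(T)=\lf\phi(T_1)\rf_{\alpha_1}$ by Eq.~\eqref{eq:df1}, and applying $\psi$ gives $B^+_{\alpha_1}(\psi(\phi(T_1)))=B^+_{\alpha_1}(T_1)=T$ by Eq.~\eqref{eq:df} and the depth induction hypothesis applied to $T_1$ (which has strictly smaller depth). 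When $\bra(T)\geq 2$, I split according to whether $T_1=|$ (Case~1, Eq.~\eqref{eq:df2}) or $T_1\neq|$ (Case~2, Eq.~\eqref{eq:df3}); in each case $\phi(T)$ is a product $w_1\cdot(\text{rest})$ whose $\psi$-image, computed via Eq.~\eqref{eq:dff}, is $\psi(w_1)\diamond\psi(\text{rest})$. By the breadth induction hypothesis $\psi(\text{rest})$ recovers the tree $T$ with its first branch removed, and $\psi(w_1)$ recovers the first branch (either the trivial corolla piece or $B^+_{\alpha_1}(T_1)$); the crux is then to check that the $\diamond$-product of these two pieces reassembles $T$.

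For $\phi\circ\psi=\id$, I would dually induct on $\dep(w)$ and then on $\bre(w)$, verifying that $\phi$ undoes Eqs.~\eqref{eq:df4}, \eqref{eq:df}, and~\eqref{eq:dff} by invoking the matching clauses Eqs.~\eqref{eq:df0}--\eqref{eq:df3}. The case $\bre(w)=1$, i.e.\ $w=\lf\lbar w\rf_\alpha$, is immediate from the compatibility of $\psi$ and $\phi$ with $B^+_\alpha$ and $\lf\ \rf_\alpha$; the case $\bre(w)\geq 2$ reduces to showing $\phi(\psi(w_1)\diamond\psi(w_2\cdots w_n))=\phi(\psi(w_1))\cdot\phi(\psi(w_2\cdots w_n))$, i.e.\ that $\phi$ carries the $\diamond$-product to the concatenation product in $\frak{M}(\Omega,X)$ on the relevant elements.

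\textbf{The main obstacle} is exactly this compatibility of $\phi$ with the product: one must show that $\phi(S\diamond S')$ matches the bracketed-word product of $\phi(S)$ and $\phi(S')$, and symmetrically that $\psi$ sends concatenation to $\diamond$, on the pieces arising in the inductive steps. This is delicate precisely in Case~4 of the definition of $\diamond$ (Eq.~\eqref{eq:def6}), where the grafting of two trees whose adjacent branches $T_{m+1},T'_1$ are both nonempty produces, via $B^+_{\alpha_{m+1}\beta_1}$, the three-term Rota-Baxter expansion $B^+_{\alpha_{m+1}}(T_{m+1})\diamond T'_1+T_{m+1}\diamond B^+_{\beta_1}(T'_1)+\lambda\,T_{m+1}\diamond T'_1$. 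Under $\phi$ this must correspond to the relation defining the subspace $S$, namely $\lf\phi(T_{m+1})\rf_{\alpha_{m+1}}\lf\phi(T'_1)\rf_{\beta_1}=\lf\lf\phi(T_{m+1})\rf_{\alpha_{m+1}}\phi(T'_1)\rf_{\alpha_{m+1}\beta_1}+\lf\phi(T_{m+1})\lf\phi(T'_1)\rf_{\beta_1}\rf_{\alpha_{m+1}\beta_1}+\lambda\lf\phi(T_{m+1})\phi(T'_1)\rf_{\alpha_{m+1}\beta_1}$, which is precisely the Rota-Baxter family relation that holds modulo $\Id(S)$ by Lemma~\ref{lem:GS}. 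Managing this nested induction—tracking that each $\diamond$ appearing in Eq.~\eqref{eq:def6} is applied to trees of strictly smaller combined depth or smaller breadth so that the induction hypotheses legitimately apply—is the technical heart of the argument; the remaining bookkeeping of decorations and types is routine once this compatibility is in hand.
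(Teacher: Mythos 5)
Your proof is correct and, for $\psi\circ\phi=\id$, it is essentially the paper's argument: the same outer induction on $\dep(T)$, inner induction on $\bra(T)$, and the same case split $T_1=|$ versus $T_1\neq|$, with the reassembly of $T$ via $\diamond$ as the final step. The one substantive divergence is your diagnosis of the ``main obstacle.'' You locate the technical heart in Case~4 of the definition of $\diamond$ (Eq.~(\mref{eq:def6})), where the three-term Rota--Baxter expansion would have to match the relation defining $S$ modulo $\Id(S)$ via Lemma~\mref{lem:GS}. In fact, in the verification of Proposition~\mref{prop:iso} this case never arises, and the paper's proof uses neither Eq.~(\mref{eq:def6}) nor Lemma~\mref{lem:GS}. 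For $\psi\circ\phi$: when $\phi(T)$ is peeled apart and re-multiplied via Eq.~(\mref{eq:dff}), each factor $\psi(w_1)$ is either a corolla $\simtree{x_1}$ (both outer branches trivial) or $B^{+}_{\alpha_1}(T_1)$ meeting a right factor whose first branch is trivial, so only the pure grafting cases Eqs.~(\mref{eq:def4}) and (\mref{eq:def5}) are ever invoked. For $\phi\circ\psi$: the alternating structure of Rota--Baxter family bracketed words in $\frak{X}_\infty$ forbids two adjacent bracketed factors, so $w_1$ and $w_2$ are never both of the form $\lf\cdot\rf_\omega$, and again $\psi(w_1)\diamond\psi(w_2\cdots w_n)$ lands in the grafting cases, where $\phi$ of the result is concatenation by the defining clauses Eqs.~(\mref{eq:df2})--(\mref{eq:df3}). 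The full multiplicativity $\phi(T\diamond T')=\phi(T)\dw\phi(T')$, including the delicate Case~4 expansion you worry about, is exactly Lemma~\mref{lem:compa}, which the paper proves \emph{after} Proposition~\mref{prop:iso} and uses only for Theorem~\mref{thm:free}. Your route of proving multiplicativity first and deducing $\phi\circ\psi=\id$ from it is legitimate and not circular (the paper's proof of Lemma~\mref{lem:compa} does not rely on Proposition~\mref{prop:iso}), but it is heavier than needed: it additionally requires Lemma~\mref{lem:GS} to identify $\dw$ with concatenation on alternating words, whereas the bijectivity of $\psi$ is purely combinatorial.
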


\begin{proof}
We first prove $\psi\circ \phi (T) = T$ for $T\in \calt$ by induction on $\dep(T)\geq 1$.
If $\dep(T)=1$, then $T=\treeoo{\cdb o\cdx[1.5]{o}{a1}{160}
\cdx[1.5]{o}{a2}{120}
\cdx[1.5]{o}{a3}{60}
\cdx[1.5]{o}{a4}{20}
\node at (140:1.2*\xch) {$x_1$};
\node at (90:1.2*\xch) {$\cdots$};
\node at (40:1.2*\xch) {$x_n$};
}$ with $n\geq 1$ and
$$\psi\circ\phi(T)=\psi(x_1\cdots x_n)=\treeoo{\cdb o\cdx[1.5]{o}{a1}{160}
\cdx[1.5]{o}{a2}{120}
\cdx[1.5]{o}{a3}{60}
\cdx[1.5]{o}{a4}{20}
\node at (140:1.2*\xch) {$x_1$};
\node at (90:1.2*\xch) {$\cdots$};
\node at (40:1.2*\xch) {$x_n$};
}=T\quad(\text{by Eq.~(\mref{eq:df4})}).$$
For the induction step $\dep(T)\geq 2,$
write
$$T = \treeoo{\cdb o\ocdx[2]{o}{a1}{160}{T_1}{left}\ocdx[2]{o}{a2}{120}{T_2}{above}
\ocdx[2]{o}{a3}{60}{T_n}{above}
\ocdx[2]{o}{a4}{20}{T_{n+1}}{right}
\node at (140:\xch) {$x_1$};
\node at (90:\xch) {$\cdots$};
\node at (40:\xch) {$x_n$};
\node[below] at ($(o)!0.7!(a1)$) {$\alpha_1$};
\node[below] at ($(o)!0.7!(a4)$) {$\alpha_{n+1}$};
\node[right] at ($(o)!0.85!(a2)$) {$\alpha_2$};
\node[left] at ($(o)!0.85!(a3)$) {$\alpha_n\!$};
}\,\text{ with some}\, T_i\neq |.
$$
We reduce to induction on $\bra(T)\geq 1$. For the initial step $\bra(T)=1$, we have $T=\treeoo{\zhd{o/a}
\node[right] at ($(o)!0.8!(oa)$) {$\alpha_1$};\ocdx[1.5]{o}{a1}{90}{T_1}{above}
}$ and so
\begin{align*}
\psi\circ\phi(T)&=\psi\circ\phi\Big(\treeoo{\zhd{o/a}
\node[right] at ($(o)!0.8!(oa)$) {$\alpha_1$};\ocdx[1.5]{o}{a1}{90}{T_1}{above}
}\Big)=\psi\circ\phi(B^{+}_{\alpha_1}(T_1))=\psi(\lf\phi(T_1)\rf_{\alpha_1})\quad(\text{by Eq.~(\mref{eq:df1})}
)\\
&=B^{+}_{\alpha_1}(\psi\circ\phi(T_1))\quad(\text{by Eq.~(\mref{eq:df})})\\
&= B^{+}_{\alpha_1}(T_1)\quad(\text{by the induction on $\dep(T)$})\\
&=\treeoo{\zhd{o/a}
\node[right] at ($(o)!0.8!(oa)$) {$\alpha_1$};\ocdx[1.5]{o}{a1}{90}{T_1}{above}
}=T.
\end{align*}

\noindent For the induction step $\bra(T)\geq 2$, there are two cases to consider.

\noindent {\bf Case 1:} $T_1=|$. Then
\begin{align*}
\psi\circ\phi(T)=& \ \psi\circ\phi\Big(\treeoo{\cdb o\cdx[2]{o}{a1}{160}%{T_1}{left}
\ocdx[2]{o}{a2}{120}{T_2}{above}
\ocdx[2]{o}{a3}{60}{T_n}{above}
\ocdx[2]{o}{a4}{20}{T_{n+1}}{right}
\node at (140:\xch) {$x_1$};
\node at (90:\xch) {$\cdots$};
\node at (40:\xch) {$x_n$};
%\node[below] at ($(o)!0.7!(a1)$) {$\alpha_1$};
\node[below] at ($(o)!0.7!(a4)$) {$\alpha_{n+1}$};
\node[right] at ($(o)!0.85!(a2)$) {$\alpha_2$};
\node[left] at ($(o)!0.85!(a3)$) {$\alpha_n\!$};
}\Big)
= \psi\Big(x_1\phi\Big(\treeoo{\cdb o\ocdx[2]{o}{a1}{160}{T_2}{left}\ocdx[2]{o}{a2}{120}
{T_3}{above}
\ocdx[2]{o}{a3}{60}{T_n}{above}
\ocdx[2]{o}{a4}{20}{T_{n+1}}{right}
\node at (140:\xch) {$x_2$};
\node at (90:\xch) {$\cdots$};
\node at (40:\xch) {$x_n$};
\node[below] at ($(o)!0.7!(a1)$) {$\alpha_2$};
\node[below] at ($(o)!0.7!(a4)$) {$\alpha_{n+1}$};
\node[right] at ($(o)!0.85!(a2)$) {$\alpha_3$};
\node[left] at ($(o)!0.85!(a3)$) {$\alpha_n\!$};
}\Big)\Big)\quad(\text{by Eq.~(\mref{eq:df2})})\\
=& \ \psi(x_1)\diamond \psi\circ\phi\Big(\treeoo{\cdb o\ocdx[2]{o}{a1}{160}{T_2}{left}\ocdx[2]{o}{a2}{120}
{T_3}{above}
\ocdx[2]{o}{a3}{60}{T_n}{above}
\ocdx[2]{o}{a4}{20}{T_{n+1}}{right}
\node at (140:\xch) {$x_2$};
\node at (90:\xch) {$\cdots$};
\node at (40:\xch) {$x_n$};
\node[below] at ($(o)!0.7!(a1)$) {$\alpha_2$};
\node[below] at ($(o)!0.7!(a4)$) {$\alpha_{n+1}$};
\node[right] at ($(o)!0.85!(a2)$) {$\alpha_3$};
\node[left] at ($(o)!0.85!(a3)$) {$\alpha_n\!$};
}\Big)\quad(\text{by Eq.~(\mref{eq:dff})})\\
=& \ \stree {x_1}\diamond \psi\circ\phi\Big(\treeoo{\cdb o\ocdx[2]{o}{a1}{160}{T_2}{left}\ocdx[2]{o}{a2}{120}
{T_3}{above}
\ocdx[2]{o}{a3}{60}{T_n}{above}
\ocdx[2]{o}{a4}{20}{T_{n+1}}{right}
\node at (140:\xch) {$x_2$};
\node at (90:\xch) {$\cdots$};
\node at (40:\xch) {$x_n$};
\node[below] at ($(o)!0.7!(a1)$) {$\alpha_2$};
\node[below] at ($(o)!0.7!(a4)$) {$\alpha_{n+1}$};
\node[right] at ($(o)!0.85!(a2)$) {$\alpha_3$};
\node[left] at ($(o)!0.85!(a3)$) {$\alpha_n\!$};
}\Big)\quad(\text{by Eq.~(\mref{eq:df4})})\\
=& \ \stree {x_1}\diamond \treeoo{\cdb o\ocdx[2]{o}{a1}{160}{T_2}{left}\ocdx[2]{o}{a2}{120}
{T_3}{above}
\ocdx[2]{o}{a3}{60}{T_n}{above}
\ocdx[2]{o}{a4}{20}{T_{n+1}}{right}
\node at (140:\xch) {$x_2$};
\node at (90:\xch) {$\cdots$};
\node at (40:\xch) {$x_n$};
\node[below] at ($(o)!0.7!(a1)$) {$\alpha_2$};
\node[below] at ($(o)!0.7!(a4)$) {$\alpha_{n+1}$};
\node[right] at ($(o)!0.85!(a2)$) {$\alpha_3$};
\node[left] at ($(o)!0.85!(a3)$) {$\alpha_n\!$};
}\quad(\text{by the induction on $\bra(T)$})\\
=& \ \treeoo{\cdb o\cdx[2]{o}{a1}{160}%{T_1}{left}
\ocdx[2]{o}{a2}{120}{T_2}{above}
\ocdx[2]{o}{a3}{60}{T_n}{above}
\ocdx[2]{o}{a4}{20}{T_{n+1}}{right}
\node at (140:\xch) {$x_1$};
\node at (90:\xch) {$\cdots$};
\node at (40:\xch) {$x_n$};
%\node[below] at ($(o)!0.7!(a1)$) {$\alpha_1$};
\node[below] at ($(o)!0.7!(a4)$) {$\alpha_{n+1}$};
\node[right] at ($(o)!0.85!(a2)$) {$\alpha_2$};
\node[left] at ($(o)!0.85!(a3)$) {$\alpha_n\!$};
}\quad(\text{by Eq.~(\mref{eq:def5})})\\
=& \ T.
\end{align*}
\noindent{\bf Case 2:} $T_1\neq |.$ Then
\begin{align*}
\psi\circ\phi(T)=& \ \psi\Biggl(\lf \phi(T_1)\rf_{\alpha_1} x_1\phi\Big(\treeoo{\cdb o\ocdx[2]{o}{a1}{160}{T_2}{left}\ocdx[2]{o}{a2}{120}{T_3}{above}
\ocdx[2]{o}{a3}{60}{T_n}{above}
\ocdx[2]{o}{a4}{20}{T_{n+1}}{right}
\node at (140:\xch) {$x_2$};
\node at (90:\xch) {$\cdots$};
\node at (40:\xch) {$x_n$};
\node[below] at ($(o)!0.7!(a1)$) {$\alpha_2$};
\node[below] at ($(o)!0.7!(a4)$) {$\alpha_{n+1}$};
\node[right] at ($(o)!0.85!(a2)$) {$\alpha_3$};
\node[left] at ($(o)!0.85!(a3)$) {$\alpha_n\!$};
}\Big)\Biggl)\quad(\text{by Eq.~(\mref{eq:df3})})\\
=& \ \psi(\lf\phi(T_1)\rf_{\alpha_1})\diamond \psi\Biggl(x_1\phi\Big(\treeoo{\cdb o\ocdx[2]{o}{a1}{160}{T_2}{left}\ocdx[2]{o}{a2}{120}{T_3}{above}
\ocdx[2]{o}{a3}{60}{T_n}{above}
\ocdx[2]{o}{a4}{20}{T_{n+1}}{right}
\node at (140:\xch) {$x_2$};
\node at (90:\xch) {$\cdots$};
\node at (40:\xch) {$x_n$};
\node[below] at ($(o)!0.7!(a1)$) {$\alpha_2$};
\node[below] at ($(o)!0.7!(a4)$) {$\alpha_{n+1}$};
\node[right] at ($(o)!0.85!(a2)$) {$\alpha_3$};
\node[left] at ($(o)!0.85!(a3)$) {$\alpha_n\!$};
}\Big)\Biggl)\quad(\text{by Eq.~(\mref{eq:dff})})\\
=& \ B^{+}_{\alpha_1}\Big(\psi\circ\phi(T_1)\Big)\diamond \psi\Biggl(x_1\phi\Big(\treeoo{\cdb o\ocdx[2]{o}{a1}{160}{T_2}{left}\ocdx[2]{o}{a2}{120}{T_3}{above}
\ocdx[2]{o}{a3}{60}{T_n}{above}
\ocdx[2]{o}{a4}{20}{T_{n+1}}{right}
\node at (140:\xch) {$x_2$};
\node at (90:\xch) {$\cdots$};
\node at (40:\xch) {$x_n$};
\node[below] at ($(o)!0.7!(a1)$) {$\alpha_2$};
\node[below] at ($(o)!0.7!(a4)$) {$\alpha_{n+1}$};
\node[right] at ($(o)!0.85!(a2)$) {$\alpha_3$};
\node[left] at ($(o)!0.85!(a3)$) {$\alpha_n\!$};
}\Big)\Biggl)\quad(\text{by Eq.~(\mref{eq:df})})\\
=& \ B^{+}_{\alpha_1}(T_1)\diamond \Biggl(\psi(x_1)\diamond\psi\circ\phi\Big(\treeoo{\cdb o\ocdx[2]{o}{a1}{160}{T_2}{left}\ocdx[2]{o}{a2}{120}{T_3}{above}
\ocdx[2]{o}{a3}{60}{T_n}{above}
\ocdx[2]{o}{a4}{20}{T_{n+1}}{right}
\node at (140:\xch) {$x_2$};
\node at (90:\xch) {$\cdots$};
\node at (40:\xch) {$x_n$};
\node[below] at ($(o)!0.7!(a1)$) {$\alpha_2$};
\node[below] at ($(o)!0.7!(a4)$) {$\alpha_{n+1}$};
\node[right] at ($(o)!0.85!(a2)$) {$\alpha_3$};
\node[left] at ($(o)!0.85!(a3)$) {$\alpha_n\!$};
}\Big)\Biggl)\\
&\quad(\text{by the induction on $\dep(T)$ and Eq.~(\mref{eq:dff})})\\
=& \ B^{+}_{\alpha_1}(T_1)\diamond \Big(\stree {x_1}\diamond \treeoo{\cdb o\ocdx[2]{o}{a1}{160}{T_2}{left}\ocdx[2]{o}{a2}{120}{T_3}{above}
\ocdx[2]{o}{a3}{60}{T_n}{above}
\ocdx[2]{o}{a4}{20}{T_{n+1}}{right}
\node at (140:\xch) {$x_2$};
\node at (90:\xch) {$\cdots$};
\node at (40:\xch) {$x_n$};
\node[below] at ($(o)!0.7!(a1)$) {$\alpha_2$};
\node[below] at ($(o)!0.7!(a4)$) {$\alpha_{n+1}$};
\node[right] at ($(o)!0.85!(a2)$) {$\alpha_3$};
\node[left] at ($(o)!0.85!(a3)$) {$\alpha_n\!$};
}\Big)\quad(\text{by Eq.~(\mref{eq:df4}) and induction on $\bra(T)$})\\
=& \ B^{+}_{\alpha_1}(T_1)\diamond \treeoo{\cdb o\cdx[2]{o}{a1}{160}%{T_1}{left}
\ocdx[2]{o}{a2}{120}{T_2}{above}
\ocdx[2]{o}{a3}{60}{T_n}{above}
\ocdx[2]{o}{a4}{20}{T_{n+1}}{right}
\node at (140:\xch) {$x_1$};
\node at (90:\xch) {$\cdots$};
\node at (40:\xch) {$x_n$};
%\node[below] at ($(o)!0.7!(a1)$) {$\alpha_1$};
\node[below] at ($(o)!0.7!(a4)$) {$\alpha_{n+1}$};
\node[right] at ($(o)!0.85!(a2)$) {$\alpha_2$};
\node[left] at ($(o)!0.85!(a3)$) {$\alpha_n\!$};
}\quad(\text{by Eq.~(\mref{eq:def5})})\\
=& \ \treeoo{\cdb o\ocdx[2]{o}{a1}{160}{T_1}{left}\ocdx[2]{o}{a2}{120}{T_2}{above}
\ocdx[2]{o}{a3}{60}{T_n}{above}
\ocdx[2]{o}{a4}{20}{T_{n+1}}{right}
\node at (140:\xch) {$x_1$};
\node at (90:\xch) {$\cdots$};
\node at (40:\xch) {$x_n$};
\node[below] at ($(o)!0.7!(a1)$) {$\alpha_1$};
\node[below] at ($(o)!0.7!(a4)$) {$\alpha_{n+1}$};
\node[right] at ($(o)!0.85!(a2)$) {$\alpha_2$};
\node[left] at ($(o)!0.85!(a3)$) {$\alpha_n\!$};
}\quad(\text{by Eq.~(\mref{eq:def4})})\\
=& \ T.
\end{align*}
So we have $\psi\circ\phi=\id.$ Similarly, we can prove $\phi\circ\psi=\id.$ This completes the proof.
\end{proof}

The following result shows that the linear map $\phi$ is compatible with respect to the multiplications $\diamond$ and $\dw$, where $\dw$ is the associative multiplication on the free Rota-Baxter family algebra given in Lemma~\mref{lem:GS}.

\begin{lemma}\mlabel{lem:compa}
For $T$ and $T'$ in $\calt$, we have
\begin{equation}
\phi(T\diamond T') = \phi(T) \diamondw \phi(T').
\mlabel{eq:compa}
\end{equation}
\end{lemma}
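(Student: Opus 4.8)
The plan is to prove \eqref{eq:compa} by induction on the total depth $\dep(T)+\dep(T')\ge 2$, following exactly the four-case split that defines $\diamond$ in Eqs.~\eqref{eq:def1} and \eqref{eq:def3}--\eqref{eq:def6}. All the book-keeping is organized around the explicit left-to-right reading of $\phi$ obtained by unwinding Eqs.~\eqref{eq:df0}--\eqref{eq:df3}: if $T$ has angle decorations $x_1,\dots,x_m$ and branches $T_1,\dots,T_{m+1}$ with internal-edge types $\alpha_1,\dots,\alpha_{m+1}$, then $\phi(T)=w_1x_1w_2\cdots x_mw_{m+1}$, where $w_j=\lf\phi(T_j)\rf_{\alpha_j}$ if $T_j\neq|$ and $w_j=1$ if $T_j=|$; by construction $\phi$ carries the basis $\calt$ into $\frak X_\infty$ (Proposition~\mref{prop:iso}). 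I also record the two facts used throughout: $\phi\circ B^+_\omega=\lf\phi(-)\rf_\omega$ (Eqs.~\eqref{eq:df1},~\eqref{eq:df}), and, by Lemma~\mref{lem:GS}, the product $\diamondw$ on the basis $\frak X_\infty$ is concatenation of bracketed words followed by reduction modulo $S$, the reduction being triggered \emph{only} when two factors $\lf\,\rf_\alpha$ and $\lf\,\rf_\beta$ become adjacent.

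For the base case $\dep(T)+\dep(T')=2$ both trees have depth one, and \eqref{eq:def1} makes $T\diamond T'$ the depth-one tree with angle word $x_1\cdots x_my_1\cdots y_n$; applying \eqref{eq:df0} gives $\phi(T\diamond T')=x_1\cdots x_my_1\cdots y_n=\phi(T)\diamondw\phi(T')$, the latter being a concatenation of two words of $M(X)$, already in normal form. Cases~1--3 (Eqs.~\eqref{eq:def3},~\eqref{eq:def4},~\eqref{eq:def5}) require no appeal to the inductive hypothesis at all: there $T\diamond T'$ is the single tree obtained by splicing the two branch lists, and since at most one of the junction branches $T_{m+1},T'_1$ is nontrivial, the word $\phi(T\diamond T')$ read off by the formula above is exactly the concatenation $\phi(T)\phi(T')$, in which the two factors meet at a generator-bracket (or generator-generator) junction. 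No relation of $S$ applies, so this concatenation is already a Rota-Baxter family bracketed word and equals $\phi(T)\diamondw\phi(T')$.

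The crux is Case~4 ($T_{m+1}\neq|\neq T'_1$), where the Rota-Baxter family relation enters. Write $\hat T$ for $T$ with its last branch replaced by $|$, $\check{T'}$ for $T'$ with its first branch replaced by $|$, and set $a:=\phi(T_{m+1})$, $b:=\phi(T'_1)$, $\alpha:=\alpha_{m+1}$, $\beta:=\beta_1$. The last line of \eqref{eq:def6} presents $T\diamond T'$ as $(\hat T\diamond M)\diamond\check{T'}$ with $M=B^+_{\alpha\beta}\big(B^+_{\alpha}(T_{m+1})\diamond T'_1+T_{m+1}\diamond B^+_{\beta}(T'_1)+\lambda\,T_{m+1}\diamond T'_1\big)$, where the outer two products are the already-settled Cases~3 and~2. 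Hence $\phi(T\diamond T')=\phi(\hat T)\diamondw\phi(M)\diamondw\phi(\check{T'})$. The three products inside $M$ all have total depth strictly less than $\dep(T)+\dep(T')$ (each is bounded by $\dep(T)+\dep(T')-1$, since $T_{m+1}$ and $T'_1$ are proper branches), so the inductive hypothesis together with $\phi\circ B^+_\omega=\lf\phi(-)\rf_\omega$ gives $\phi(M)=\big\lfloor\,\lf a\rf_\alpha\diamondw b+a\diamondw\lf b\rf_\beta+\lambda\,a\diamondw b\,\big\rfloor_{\alpha\beta}$. By the defining relation $S$, i.e. the Rota-Baxter family identity \eqref{eq:RBF} for the operators $P_\omega=\lf\,\rf_\omega$ on the free RBFA, this is precisely $\lf a\rf_\alpha\diamondw\lf b\rf_\beta$.

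Finally I would close the loop using the explicit formula once more: since $\hat T$ ends at the generator $x_m$ and $\check{T'}$ begins at the generator $y_1$, one has $\phi(T)=\phi(\hat T)\diamondw\lf a\rf_\alpha$ and $\phi(T')=\lf b\rf_\beta\diamondw\phi(\check{T'})$ with no reduction, whence by associativity of $\diamondw$
\begin{align*}
\phi(T)\diamondw\phi(T') &=\phi(\hat T)\diamondw\big(\lf a\rf_\alpha\diamondw\lf b\rf_\beta\big)\diamondw\phi(\check{T'})\\
&=\phi(\hat T)\diamondw\phi(M)\diamondw\phi(\check{T'})=\phi(T\diamond T').
\end{align*}
The main obstacle is exactly this Case~4: one must recognize that the three-term tree-level expression hard-wired into \eqref{eq:def6} is the image under $\phi$ of the right-hand side of the Rota-Baxter relation, so that the single rewriting step of $\diamondw$ at the bracket-bracket junction is matched by the inductive unfolding of $\diamond$. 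Everything else is either the base case or the relation-free concatenation book-keeping of Cases~1--3; the remaining care is simply tracking the outer Case~2/3 framings and invoking associativity of $\diamondw$.
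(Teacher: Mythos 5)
Your proof is correct, and while it rests on the same pillars as the paper's --- induction on $\dep(T)+\dep(T')$, the identity $\phi\circ B^{+}_\omega=\lf\phi(\,\cdot\,)\rf_\omega$, and the Rota--Baxter family identity in $\bfk\frak{X}_\infty$ (Lemma~\mref{lem:GS}~\mref{it:ideal}) fired once at the bracket--bracket junction --- its organization is genuinely different. The paper runs a \emph{secondary} induction on $\bra(T)+\bra(T')$, peeling the leftmost branch--angle pair of $T$ via Eqs.~(\mref{eq:df2})--(\mref{eq:df3}) and invoking the Rota--Baxter relation only at the inner base case where both factors have a single branch; your computation of $\phi(M)$ coincides exactly with that inner base case. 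You eliminate the inner induction by unwinding $\phi$ into the explicit left-to-right word reading, so that Cases~1--3 of the definition of $\diamond$ become purely syntactic, relation-free concatenations of basis words (no two brackets adjacent), valid with no induction at all --- a point that matters, since in your Case~4 you reuse Cases~3 and~2 on the pair $\bigl(\hat T\diamond M,\,\check{T'}\bigr)$, whose depth sum need not have dropped, and any inductive dependence there would be circular; as it stands there is none. Case~4 then closes correctly: the three products inside Eq.~(\mref{eq:def6}) do have strictly smaller depth sum, and the splittings $\phi(T)=\phi(\hat T)\dw\lf a\rf_\alpha$ and $\phi(T')=\lf b\rf_\beta\dw\phi(\check{T'})$ together with associativity of $\dw$ finish the argument. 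What your route buys: the single rewriting site is isolated explicitly, and degenerate configurations (a factor equal to the unit tree, or a left factor of breadth one with $\bra(T')\geq 2$, where the paper's inner-induction Cases~1--2 have no angle $x_1$ to peel and are passed over in silence) are covered uniformly by the concatenation formula. What it costs: you lean on the normal-form reading of $\dw$ --- concatenation followed by reduction triggered only at adjacent brackets --- which invokes Lemma~\mref{lem:GS}~\mref{it:gs}; in fact, for the uses you make of it, the weaker statement suffices that $\dw$ is induced by concatenation in $\frak{M}(\Omega,\,X)$ and that $\frak{X}_\infty$ is a $\bfk$-basis, i.e.\ Lemma~\mref{lem:GS}~\mref{it:ideal} alone, whereas the paper only ever uses the quotient-algebra identities and never the normal-form description.
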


\begin{proof}
If $T=\treeoo{\cda[1.5] o\zhd{o/a}
\node[right] at ($(o)!0.7!(oa)$) {};
}$ or $T'=\treeoo{\cda[1.5] o\zhd{o/a}
\node[right] at ($(o)!0.7!(oa)$) {};
}$, without loss of generality, letting $T'=\treeoo{\cda[1.5] o\zhd{o/a}
\node[right] at ($(o)!0.7!(oa)$) {};
}$, then $\phi(T')=\phi(\treeoo{\cda[1.5] o\zhd{o/a}
\node[right] at ($(o)!0.7!(oa)$) {};
})=1$ and
$$\phi(T\diamond T')=\phi(T)=\phi(T)\dw 1=\phi(T)\dw \phi(\treeoo{\cda[1.5] o\zhd{o/a}
\node[right] at ($(o)!0.7!(oa)$) {};
})=\phi(T)\dw\phi(T').$$
Suppose $T\neq \treeoo{\cda[1.5] o\zhd{o/a}
\node[right] at ($(o)!0.7!(oa)$) {};
}$ and $T'\neq \treeoo{\cda[1.5] o\zhd{o/a}
\node[right] at ($(o)!0.7!(oa)$) {};
}$. We prove Eq.~(\mref{eq:compa}) by induction on $\dep(T)+\dep(T')\geq 2$.
For the initial step $\dep(T)+\dep(T')=2,$ we have $\dep(T)=\dep(T')=1$ and
$$T=\treeoo{\cdb o\cdx[1.5]{o}{a1}{160}
\cdx[1.5]{o}{a2}{120}
\cdx[1.5]{o}{a3}{60}
\cdx[1.5]{o}{a4}{20}
\node at (140:1.2*\xch) {$x_1$};
\node at (90:1.2*\xch) {$\cdots$};
\node at (40:1.2*\xch) {$x_m$};
}\,\text{ and }\,T'=\treeoo{\cdb o\cdx[1.5]{o}{a1}{160}
\cdx[1.5]{o}{a2}{120}
\cdx[1.5]{o}{a3}{60}
\cdx[1.5]{o}{a4}{20}
\node at (140:1.2*\xch) {$y_1$};
\node at (90:1.2*\xch) {$\cdots$};
\node at (40:1.2*\xch) {$y_n$};
}\,\text{ with }\,m,n\geq 0.$$
We get
\begin{align*}
\phi(T\diamond T')=& \ \phi\Big(\treeoo{\cdb o
\cdx[2.2]{o}{a1}{170}
\cdx[2.2]{o}{a2}{145}
\cdx[2.2]{o}{a3}{115}
\cdx[2.2]{o}{a4}{90}
\cdx[2.2]{o}{a5}{65}
\cdx[2.2]{o}{a6}{35}
\cdx[2.2]{o}{a7}{10}
\node at ($(a1)!0.5!(a2)$) {$x_1$};
\node[rotate=40] at ($(a2)!0.5!(a3)$) {$\cdots$};
\node at ($(a3)!0.5!(a4)$) {$x_m$};
\node at ($(a4)!0.5!(a5)$) {$y_1$};
\node[rotate=-40] at ($(a5)!0.5!(a6)$) {$\cdots$};
\node at ($(a6)!0.5!(a7)$) {$y_n$};
}\Big)\quad(\text{by Eq.~(\mref{eq:def1})})\\
=& \ x_1\cdots x_my_1\cdots y_n\quad(\text{by Eq.~(\mref{eq:df0})})\\
=& \ (x_1\cdots x_m)\dw(y_1\cdots y_n)\quad(\text{by the definition of $\dw$)})\\
=& \ \phi(T)\dw\phi(T')\quad(\text{by Eq.~(\mref{eq:df0})}).
\end{align*}
For the induction step $\dep(T)+\dep(T')\geq 3,$ let $$T=\treeoo{\cdb o\ocdx[2]{o}{a1}{160}{T_1}{left}
\ocdx[2]{o}{a2}{120}{T_2}{above}
\ocdx[2]{o}{a3}{60}{T_m}{above}
%\cdx[2]{o}{a4}{20}
\ocdx[2]{o}{a4}{20}{T_{m+1}}{right}
\node at (140:\xch) {$x_1$};
\node at (90:\xch) {$\cdots$};
\node at (40:\xch) {$x_m$};
\node[below] at ($(o)!0.7!(a1)$) {$\alpha_1$};
\node[below] at ($(o)!0.7!(a4)$) {$\alpha_{m+1}$};
\node[right] at ($(o)!0.85!(a2)$) {$\alpha_2$};
\node[left] at ($(o)!0.85!(a3)$) {$\alpha_m\!$};
}\,\text{and}\,T'=\treeoo{\cdb o\ocdx[2]{o}{a1}{160}{T'_1}{left}
\ocdx[2]{o}{a2}{120}{T'_2}{above}
\ocdx[2]{o}{a3}{60}{T'_n}{above}
\ocdx[2]{o}{a4}{20}{T'_{n+1}}{right}
\node at (140:\xch) {$y_1$};
\node at (90:\xch) {$\cdots$};
\node at (40:\xch) {$y_n$};
\node[below] at ($(o)!0.7!(a1)$) {$\beta_1$};
\node[below] at ($(o)!0.7!(a4)$) {$\beta_{n+1}$};
\node[right] at ($(o)!0.85!(a2)$) {$\beta_2$};
\node[left] at ($(o)!0.85!(a3)$) {$\beta_n\!$};
}\,\text{ with some } T_i\neq |\,\text{ or some }\, T'_i\neq |.$$
We reduce to induction on $\bra(T)+\bra(T')\geq 2$. For the initial step  $\bra(T)+\bra(T')=2$, we have $T=\treeoo{\zhd{o/a}
\node[right] at ($(o)!0.8!(oa)$) {$\alpha_1$};\ocdx[1.5]{o}{a1}{90}{T_1}{above}
}$ and $T'=\treeoo{\zhd{o/a}
\node[right] at ($(o)!0.8!(oa)$) {$\beta_1$};\ocdx[1.5]{o}{a1}{90}{T'_1}{above}
}$. Then
\begin{align*}
\phi(T\diamond T')&=\phi\Big(B^+_{\alpha_1}(T_1)\diamond B^+_{\beta_1}(T'_1)\Big)\\
&=\phi\Big(B^+_{\alpha_1\beta_1}\Big(B^+_{\alpha_1}(T_1)\diamond T'_1+T_1\diamond B^+_{\beta_1}(T'_1)+\lambda T_1\diamond T'_1\Big)\Big)\quad(\text{by Eq.~(\mref{eq:def6})})\\
&=\lf \lf\phi(T_1)\rf_{\alpha_1}\dw\phi(T'_1)
+\phi(T_1)\dw\lf\phi(T'_1)\rf_{\beta_1}+\lambda\phi(T_1)\dw\phi(T'_1)
\rf_{\alpha_1\beta_1}\\
&\hspace{1cm}(\text{by Eq.~(\mref{eq:df1}) and the induction on $\dep(T)+\dep(T')$})\\
&=\lf\phi(T_1)\rf_{\alpha_1}\dw\lf\phi(T'_1)\rf_{\beta_1}\quad(\text{by Lemma~\mref{lem:GS}~\mref{it:ideal}})\\
&=\phi(T)\dw\phi(T')\quad(\text{by Eq.~(\mref{eq:df1})}).
\end{align*}
For the induction step $\bra(T)+\bra(T')\geq 3,$ there are two cases to consider.

\noindent {\bf Case 1:} $T_1=|.$ Then
\begin{align*}
\phi(T\diamond T')&=x_1\phi\Big(\treeoo{\cdb o\ocdx[2]{o}{a1}{160}{T_2}{left}
\ocdx[2]{o}{a2}{120}{T_3}{above}
\ocdx[2]{o}{a3}{60}{T_m}{above}
%\cdx[2]{o}{a4}{20}
\ocdx[2]{o}{a4}{20}{T_{m+1}}{right}
\node at (140:\xch) {$x_2$};
\node at (90:\xch) {$\cdots$};
\node at (40:\xch) {$x_m$};
\node[below] at ($(o)!0.7!(a1)$) {$\alpha_2$};
\node[below] at ($(o)!0.7!(a4)$) {$\alpha_{m+1}$};
\node[right] at ($(o)!0.85!(a2)$) {$\alpha_3$};
\node[left] at ($(o)!0.85!(a3)$) {$\alpha_m\!$};
}\diamond \treeoo{\cdb o\ocdx[2]{o}{a1}{160}{T'_1}{left}
\ocdx[2]{o}{a2}{120}{T'_2}{above}
\ocdx[2]{o}{a3}{60}{T'_n}{above}
\ocdx[2]{o}{a4}{20}{T'_{n+1}}{right}
\node at (140:\xch) {$y_1$};
\node at (90:\xch) {$\cdots$};
\node at (40:\xch) {$y_n$};
\node[below] at ($(o)!0.7!(a1)$) {$\beta_1$};
\node[below] at ($(o)!0.7!(a4)$) {$\beta_{n+1}$};
\node[right] at ($(o)!0.85!(a2)$) {$\beta_2$};
\node[left] at ($(o)!0.85!(a3)$) {$\beta_n\!$};
}\Big)\quad(\text{by Eq.~(\mref{eq:df2})})\\
&=x_1\phi\Big(\treeoo{\cdb o\ocdx[2]{o}{a1}{160}{T_2}{left}
\ocdx[2]{o}{a2}{120}{T_3}{above}
\ocdx[2]{o}{a3}{60}{T_m}{above}
%\cdx[2]{o}{a4}{20}
\ocdx[2]{o}{a4}{20}{T_{m+1}}{right}
\node at (140:\xch) {$x_2$};
\node at (90:\xch) {$\cdots$};
\node at (40:\xch) {$x_m$};
\node[below] at ($(o)!0.7!(a1)$) {$\alpha_2$};
\node[below] at ($(o)!0.7!(a4)$) {$\alpha_{m+1}$};
\node[right] at ($(o)!0.85!(a2)$) {$\alpha_3$};
\node[left] at ($(o)!0.85!(a3)$) {$\alpha_m\!$};
}\Big)\dw\phi\Big(\treeoo{\cdb o\ocdx[2]{o}{a1}{160}{T'_1}{left}
\ocdx[2]{o}{a2}{120}{T'_2}{above}
\ocdx[2]{o}{a3}{60}{T'_n}{above}
\ocdx[2]{o}{a4}{20}{T'_{n+1}}{right}
\node at (140:\xch) {$y_1$};
\node at (90:\xch) {$\cdots$};
\node at (40:\xch) {$y_n$};
\node[below] at ($(o)!0.7!(a1)$) {$\beta_1$};
\node[below] at ($(o)!0.7!(a4)$) {$\beta_{n+1}$};
\node[right] at ($(o)!0.85!(a2)$) {$\beta_2$};
\node[left] at ($(o)!0.85!(a3)$) {$\beta_n\!$};
}\Big)\\
&\hspace{2cm}(\text{by the induction on $\bra(T)+\bra(T')$})\\
&=\phi(T)\dw\phi(T')\quad(\text{by Eq.~(\mref{eq:df2})}).
\end{align*}
\noindent {\bf Case 2:} $T_1\neq |.$ Then
\begin{align*}
\phi(T\diamond T')&=\lf\phi(T_1)\rf_{\alpha_1}x_1\phi\Big(\treeoo{\cdb o\ocdx[2]{o}{a1}{160}{T_2}{left}
\ocdx[2]{o}{a2}{120}{T_3}{above}
\ocdx[2]{o}{a3}{60}{T_m}{above}
%\cdx[2]{o}{a4}{20}
\ocdx[2]{o}{a4}{20}{T_{m+1}}{right}
\node at (140:\xch) {$x_2$};
\node at (90:\xch) {$\cdots$};
\node at (40:\xch) {$x_m$};
\node[below] at ($(o)!0.7!(a1)$) {$\alpha_2$};
\node[below] at ($(o)!0.7!(a4)$) {$\alpha_{m+1}$};
\node[right] at ($(o)!0.85!(a2)$) {$\alpha_3$};
\node[left] at ($(o)!0.85!(a3)$) {$\alpha_m\!$};
}\diamond \treeoo{\cdb o\ocdx[2]{o}{a1}{160}{T'_1}{left}
\ocdx[2]{o}{a2}{120}{T'_2}{above}
\ocdx[2]{o}{a3}{60}{T'_n}{above}
\ocdx[2]{o}{a4}{20}{T'_{n+1}}{right}
\node at (140:\xch) {$y_1$};
\node at (90:\xch) {$\cdots$};
\node at (40:\xch) {$y_n$};
\node[below] at ($(o)!0.7!(a1)$) {$\beta_1$};
\node[below] at ($(o)!0.7!(a4)$) {$\beta_{n+1}$};
\node[right] at ($(o)!0.85!(a2)$) {$\beta_2$};
\node[left] at ($(o)!0.85!(a3)$) {$\beta_n\!$};
}\Big)\quad(\text{by Eq.~(\mref{eq:df3})})\\
&=\lf\phi(T_1)\rf_{\alpha_1}x_1\phi\Big(\treeoo{\cdb o\ocdx[2]{o}{a1}{160}{T_2}{left}
\ocdx[2]{o}{a2}{120}{T_3}{above}
\ocdx[2]{o}{a3}{60}{T_m}{above}
%\cdx[2]{o}{a4}{20}
\ocdx[2]{o}{a4}{20}{T_{m+1}}{right}
\node at (140:\xch) {$x_2$};
\node at (90:\xch) {$\cdots$};
\node at (40:\xch) {$x_m$};
\node[below] at ($(o)!0.7!(a1)$) {$\alpha_2$};
\node[below] at ($(o)!0.7!(a4)$) {$\alpha_{m+1}$};
\node[right] at ($(o)!0.85!(a2)$) {$\alpha_3$};
\node[left] at ($(o)!0.85!(a3)$) {$\alpha_m\!$};
}\Big)\dw\phi\Big(\treeoo{\cdb o\ocdx[2]{o}{a1}{160}{T'_1}{left}
\ocdx[2]{o}{a2}{120}{T'_2}{above}
\ocdx[2]{o}{a3}{60}{T'_n}{above}
\ocdx[2]{o}{a4}{20}{T'_{n+1}}{right}
\node at (140:\xch) {$y_1$};
\node at (90:\xch) {$\cdots$};
\node at (40:\xch) {$y_n$};
\node[below] at ($(o)!0.7!(a1)$) {$\beta_1$};
\node[below] at ($(o)!0.7!(a4)$) {$\beta_{n+1}$};
\node[right] at ($(o)!0.85!(a2)$) {$\beta_2$};
\node[left] at ($(o)!0.85!(a3)$) {$\beta_n\!$};
}\Big)\\
&\hspace{2cm}(\text{by the induction on $\bra(T)+\bra(T')$})\\
&=\phi(T)\dw\phi(T')\quad(\text{by Eq.~(\mref{eq:df3})}).
\end{align*}
This completes the proof.
\end{proof}
\noindent Now we are ready for our main result in this section. Let $j_X$ be the embedding given by
$$j_X:X\ra\bfk\calt,\, x\mapsto \stree x.$$
\begin{theorem}\mlabel{thm:free}
Let $X$ be a set and let $\Omega$ be a semigroup. The triple $\bigl(\bfk\calt,\diamond,(B^{+}_\omega)_{\omega\in\Omega}\bigr)$, together with the $j_X$, is the free Rota-Baxter family algebra of weight $\lambda$ on $X$.
\end{theorem}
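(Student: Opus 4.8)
The plan is to promote the bijection $\phi\colon\bfk\calt\to\bfk\frak{X}_\infty$ constructed above to an isomorphism of Rota-Baxter family algebras that carries the embedding $j_X$ to the canonical embedding of $X$ into the free object $\bfk\frak{X}_\infty$, and then to transport the universal property recorded in Lemma~\mref{lem:GS}~\mref{it:ideal} back along $\phi^{-1}=\psi$. This reduces the theorem to three verifications, the most laborious of which are already in hand.

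First I would record that $\phi$ intertwines the unary operators, i.e. $\phi\circ B^{+}_\omega=P_\omega\circ\phi$ for every $\omega\in\Omega$. This is immediate rather than inductive: for any $T\in\calt$ the tree $B^{+}_\omega(T)$ has a single branch, so $\bra(B^{+}_\omega(T))=1$ and $B^{+}_\omega(T)$ is precisely of the shape treated in Eq.~(\mref{eq:df1}), whence $\phi(B^{+}_\omega(T))=\lf\phi(T)\rf_\omega=P_\omega(\phi(T))$. Together with Proposition~\mref{prop:iso} (that $\phi$ is a linear bijection with inverse $\psi$) and Lemma~\mref{lem:compa} (that $\phi(T\diamond T')=\phi(T)\diamondw\phi(T')$), this shows that $\phi$ is a bijective linear map compatible with both the products and the families of operators, and hence that $\psi$ enjoys the same compatibilities.

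Since $(\bfk\frak{X}_\infty,\diamondw,(\lf\,\rf_\omega)_{\omega\in\Omega})$ is an associative unital Rota-Baxter family algebra of weight $\lambda$, I would then transport this structure along $\psi$: associativity and unitality of $\diamond$, the fact that the unit is the one-angle tree, and the weight-$\lambda$ relation~(\mref{eq:RBF}) for the operators $B^{+}_\omega$ all follow by applying $\psi$ to the corresponding identities in $\bfk\frak{X}_\infty$ and invoking the two compatibilities above. For instance, writing an arbitrary pair of trees as $T=\psi(a)$, $T'=\psi(b)$ and applying $\psi$ to $\lf a\rf_\alpha\diamondw\lf b\rf_\beta=\lf\lf a\rf_\alpha\diamondw b+a\diamondw\lf b\rf_\beta+\lambda\,a\diamondw b\rf_{\alpha\beta}$ yields exactly Eq.~(\mref{eq:RBF}) for $\diamond$ and $(B^{+}_\omega)_{\omega\in\Omega}$. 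Thus $(\bfk\calt,\diamond,(B^{+}_\omega)_{\omega\in\Omega})$ is a Rota-Baxter family algebra of weight $\lambda$ and $\phi$ is an isomorphism of such. Finally I would match the generators: by Eq.~(\mref{eq:df0}) we have $\phi(j_X(x))=\phi(\stree x)=x$, so $\phi$ identifies $j_X(X)$ with the canonical image of $X$. Given any Rota-Baxter family algebra $(R,(P_\omega)_{\omega\in\Omega})$ of weight $\lambda$ and a map $f\colon X\to R$, freeness of $\bfk\frak{X}_\infty$ produces a unique morphism $g\colon\bfk\frak{X}_\infty\to R$ extending $f$; then $g\circ\phi$ is the unique morphism $\bfk\calt\to R$ with $(g\circ\phi)\circ j_X=f$, since any such morphism composed with $\psi$ extends $f$ on $\bfk\frak{X}_\infty$ and hence equals $g$. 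This is the desired universal property.

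The computational core of the argument is entirely absorbed into Proposition~\mref{prop:iso} and Lemma~\mref{lem:compa}; what remains is conceptual bookkeeping. The point I expect to require the most care is making sure that transport of structure along a bijection respecting \emph{both} the multiplication and the unary operators genuinely delivers all the defining axioms simultaneously — in particular that the intricate, case-by-case inductive definition of $\diamond$ (whose associativity is far from obvious directly) inherits associativity and the weight-$\lambda$ relation~(\mref{eq:RBF}) from $\bfk\frak{X}_\infty$ rather than needing an independent combinatorial proof, and that the two embeddings are correctly identified so that the universal properties coincide.
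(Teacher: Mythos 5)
Your proposal is correct and is essentially the paper's own argument: the paper likewise proves the Rota-Baxter family axioms for $(\bfk\calt,\diamond,(B^{+}_\omega)_{\omega\in\Omega})$ by applying $\phi$, invoking Lemma~\mref{lem:compa} and Lemma~\mref{lem:GS}~\mref{it:ideal}, and cancelling via $\psi\circ\phi=\id$, and then deduces freeness from the fact that $\phi$ is a Rota-Baxter family isomorphism (operator compatibility being exactly Eq.~(\mref{eq:df1}), as you observe) onto the free object $\bfk\frak{X}_\infty$. Your additional bookkeeping --- checking $\phi(j_X(x))=x$ and spelling out the transported universal property --- merely makes explicit what the paper leaves implicit in its Step 2.
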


\begin{proof}
We divide the proof into two steps.

{\bf Step 1:} We prove that $(\bfk\calt,\diamond,(B^+_\omega)_{\omega\in\Omega})$ is a Rota-Baxter family algebra. First, for $T_1, T_2,T_3\in\cxo T;$, we have
\begin{align*}
\phi\Big((T_1\diamond T_2)\diamond T_3\Big)&=\Big(\phi(T_1)\dw\phi(T_2)\Big)\dw\phi(T_3)\Big)\quad(\text{by Lemma~\mref{lem:compa}})\\
&=\phi(T_1)\dw\Big(\phi(T_2)\dw\phi(T_3)\Big)\quad(\text{by Lemma~\mref{lem:GS}~\mref{it:ideal}})\\
&=\phi\Big(T_1\diamond(T_2\diamond T_3)\Big)\quad(\text{by Lemma~\mref{lem:compa}}).
\end{align*}
Since $\psi\circ\phi=\id$ by Proposition~\mref{prop:iso}, we have
$$(T_1\diamond T_2)\diamond T_3=T_1\diamond(T_2\diamond T_3),$$
which proves the associativity. Second, for $T, T'\in\cxo T;$, we have
\begin{align*}
\phi\Big(B^{+}_\alpha(T)\diamond B^+_\beta(T')\Big)&=\phi\Big(B^+_\alpha(T)\Big)\dw\phi\Big(B^+_\beta(T')\Big)\quad(\text{by Lemma~\mref{lem:compa}})\\
&=\lf\phi(T)\rf_\alpha\dw\lf\phi(T')\rf_\beta\quad(\text{by Eq.~(\mref{eq:df1})})\\
&=\lf\phi(T)\dw\lf\phi(T')\rf_\beta+\lf\phi(T)\rf_\alpha\dw\phi(T')
+\lambda\phi(T)\dw\phi(T')\rf_{\alpha\beta}\\
&\hspace{2cm}(\text{by Lemma~\mref{lem:GS}~\mref{it:ideal}})\\
&=\phi\Big(B^+_{\alpha\beta}\Big(T\diamond B^+_\beta(T')+B^+_\alpha(T)\diamond T'+\lambda T\diamond T'\Big)\Big)\quad(\text{by Eq.~(\mref{eq:df1}) and Lemma~\mref{lem:compa}}).
\end{align*}
Again by $\psi\circ\phi=\id$, we have
$$B^{+}_\alpha(T)\diamond B^+_\beta(T')=B^+_{\alpha\beta}\Big(T\diamond B^+_\beta(T')+B^+_\alpha(T)\diamond T'+\lambda T\diamond T'\Big),$$
whence $(B^+_\omega)_{\omega\in\Omega}$ is a Rota-Baxter family on $(\bfk\calt, \diamond)$.

{\bf Step 2:} We prove the freeness of $(\bfk\calt, \diamond, (B^+_\omega)_{\omega\in\Omega})$. By Proposition~\mref{prop:iso}, $\phi:\bfk\calt\ra \bfk\frak{X}_\infty$ is a linear isomorphism. Further, by Eq.~(\mref{eq:df1}) and Lemma~\mref{lem:compa}, $\phi$ is a Rota-Baxter family isomorphism. Finally, since
$(\bfk\frak{X}_\infty,\dw,(\lf\,\rf_\omega)_{\omega\in\Omega})$ is a free Rota-Baxter family algebra on $X$ by Lemma~\mref{lem:GS}{~\mref{it:ideal}}, the result holds.
\end{proof}

If $\Omega$ is a trivial semigroup, all typed angularly decorated planar rooted trees
in $\calt$ have the same edge decorations, and so $\calt$ reduces to angularly decorated planar rooted trees
$\mathcal{T}(X)$ without edge decorations. Meanwhile, the unique $B^+_\omega$ with $\omega\in \Omega$ reduces to
the grafting operation $B^+$ applied in the Connes-Kreimer Hopf algebra~\mcite{CK98}.
As a consequence, we obtain a new construction of free Rota-Baxter algebra in terms of
$\mathcal{T}(X)$.

\begin{coro}
Let $X$ be a set and let $\Omega$ be a trivial semigroup.
Then the triple $(\bfk\mathcal{T}(X),\diamond, B^{+})$, together with the $j_X$, is the free
Rota-Baxter algebra of weight $\lambda$ on $X$.
\end{coro}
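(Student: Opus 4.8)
The plan is to deduce this corollary directly from Theorem~\mref{thm:free} by specializing to the trivial (one-element) semigroup $\Omega=\{\ast\}$ and observing that every structure in play degenerates to its ordinary, non-family, counterpart. No new computation should be needed; the content is entirely in matching up the two universal properties.

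First I would record the algebraic degeneration. When $\Omega=\{\ast\}$, a Rota-Baxter family of weight $\lambda$ is a single operator $P:=P_\ast$, and since $\ast\cdot\ast=\ast$ the defining identity~\mref{eq:RBF} collapses to the classical Rota-Baxter relation $P(a)P(b)=P\big(P(a)b+aP(b)+\lambda ab\big)$. Thus a Rota-Baxter family algebra indexed by the trivial semigroup is precisely a Rota-Baxter algebra; dually, a homomorphism of such family algebras is exactly an algebra map commuting with the single operator, i.e.\ a homomorphism of Rota-Baxter algebras. Consequently the forgetful identification between the two categories is an isomorphism of categories over $\mathbf{Set}$, and the universal property characterizing the free Rota-Baxter family algebra on $X$ becomes, verbatim, the one characterizing the free Rota-Baxter algebra on $X$.

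Next I would make the combinatorial reduction precise. With $\Omega$ a singleton, $\Omega^1=\Omega=\{\ast\}$ and the typing map $\type\colon IE(T)\to\Omega$ carries no information, so forgetting it gives a bijection between $\calt$ and the set $\mathcal{T}(X)$ of angularly decorated planar rooted trees. This bijection is the identity on underlying trees; it intertwines the unique grafting operator $B^+_\ast$ with $B^+$ and carries the product $\diamond$ to the induced product on $\bfk\mathcal{T}(X)$, since the recursive definition of $\diamond$ in Cases~1--4 uses the semigroup only through the label $\alpha_{m+1}\beta_1$ in~\mref{eq:def6}, which is now constant and hence irrelevant to the resulting tree. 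The embedding $j_X$ is visibly unchanged.

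With these two identifications in hand the corollary is immediate: Theorem~\mref{thm:free} asserts that $\big(\bfk\calt,\diamond,(B^+_\omega)_{\omega\in\Omega}\big)$ together with $j_X$ is free among Rota-Baxter family algebras of weight $\lambda$ on $X$, and transporting this statement along the category isomorphism and the bijection $\calt\cong\mathcal{T}(X)$ yields that $(\bfk\mathcal{T}(X),\diamond,B^+)$ together with $j_X$ is free among Rota-Baxter algebras of weight $\lambda$ on $X$. The only point requiring any verification is that the equivalence of categories sends free objects to free objects, which is the formal fact that an isomorphism of categories respecting the forgetful functors preserves the initial object of the relevant comma category; I therefore expect no genuine obstacle, the proof being essentially a bookkeeping specialization of the main theorem.
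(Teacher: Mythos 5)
Your proposal is correct and follows essentially the same route as the paper: the paper's proof likewise specializes Theorem~\ref{thm:free} to the trivial semigroup, noting that Eq.~(\ref{eq:RBF}) collapses to the classical Rota-Baxter relation, with the combinatorial reduction of $\calt$ to $\mathcal{T}(X)$ and of $B^+_\omega$ to $B^+$ recorded in the remark preceding the corollary. Your write-up merely makes explicit the bookkeeping (the category identification and the forgetting of the now-constant typing map) that the paper leaves implicit.
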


\begin{proof}
If $\Omega$ is a trivial semigroup, Eq.~(\mref{eq:RBF}) is precisely the Rota-Baxter equation.
So the result follows directly from Theorem~\mref{thm:free}.
\end{proof}

\section{Embedding free DFAs (resp.~TFAs) into free RBFAs}~\mlabel{sec:emb}
In this section, we show that the free dendriform (resp.~tridendriform) family algebra on a set $X$ is a canonical dendriform (resp.~tridendriform) family subalgebra of the free Rota-Baxter family algebra $\bfk\calt$ of weight zero (resp.~one).

\subsection{Embedding free DFAs into free RBFAs}
The concept of a dendriform family algebra was proposed in~\mcite{ZG}, as a generalization of a dendriform algebra invented by Loday~\mcite{Lod93} in the study of algebraic $K$-theory.
\begin{defn}\mlabel{defn:dend}\cite{ZG}
Let $\Omega$ be a semigroup. A {\bf dendriform family algebra} is a $\bfk$-module $D$ with a family of binary operations $(\prec_\omega, \succ_\omega)_{\omega\in\Omega}$ such that for $ x, y, z\in D$ and $\alpha,\beta\in \Omega$,
\begin{align*}
(x\prec_{\alpha} y) \prec_{\beta} z=\ & x \prec_{\bim{\alpha \beta}} (y\prec_{\beta} z+y \succ_{\alpha} z),\\ %\mlabel{eq:ddf1} \\
(x\succ_{\alpha} y)\prec_{\beta} z=\ & x\succ_{\alpha} (y\prec_{\beta} z),\\%\mlabel{eq:ddf2} \\
(x\prec_{\beta} y+x\succ_{\alpha} y) \succ_{\bim{\alpha \beta}}z  =\ & x\succ_{\alpha}(y \succ_{\beta} z). %\mlabel{eq:ddf3}
\end{align*}
\end{defn}

\noindent Let $X$ be a set and let $\Omega$ be a semigroup. For $n\geq 1,$
let $$Y_{n}:=Y_{n,\,X,\,\Omega}:=\caltn\cap\,\{\text{planar binary trees}\},$$ that is, $Y_n$ is the set of $X$-angularly decorated $\Omega^{1}$-typed planar binary trees with $n+1$ leaves,
such that leaves are decorated by the identity 1 in $\Omega^{1}$ and internal edges are decorated by elements of $\Omega.$
Here are some examples for better understanding.
For convenience, we omit the decoration 1 in the sequel.
\begin{align*}
%Y_0&=\{|\},\ \
Y_1&=\left\{\stree x\Bigm|x\in X
\right\},\ \
Y_2=\left\{
\XX{\xxr{-5}5
\node at (-0.4,0) {$\alpha$};
\xxhu00x \xxhu{-5}5y
}, \,
\XX{\xxl55
\node at (0.4,0) {$\alpha$};
\xxhu00x \xxhu55y
}\Bigm|x,y\in X,\alpha\in\Omega
\right\},\\
Y_3&=\left\{
\XX[scale=1.6]{\xxr{-4}4\xxr{-7.5}{7.5}
\node at (-0.35,0.1) {$\alpha$};
\node at (-0.75,0.4) {$\beta$};
\xxhu00{x} \xxhu[0.1]{-4}4{y} \xxhu[0.1]{-7.5}{7.5}{z}
}, \,
\XX[scale=1.6]{\xxl44\xxl{7.5}{7.5}
\node at (0.35,0.1) {$\alpha$};
\node at (0.7,0.4) {$\beta$};
\xxhu00{x} \xxhu[0.1]44{y} \xxhu[0.1]{7.5}{7.5}{z}
}, \,
\XX[scale=1.6]{\xxr{-6}6\xxl66
\node at (-0.5,0.15) {$\beta$};
\node at (0.45,0.15) {$\alpha$};
\xxhu00{x} \xxhu66{y} \xxhu{-6}6{z}
}, \,
\XX[scale=1.6]{\xxr{-5}5\xxl{-2}8
\node at (-0.4,0.1) {$\alpha$};
\node at (-0.2,0.55) {\tiny $\beta$};
\xxhu00x
\xxhu[0.1]{-5}5{y} \xxhu[0.1]{-2}8{z}
}, \,
\XX[scale=1.6]{\xxl55\xxr28
\node at (0.45,0.15) {$\alpha$};
\node at (0.22,0.5) {\tiny $\beta$};
\xxhu[0.1]00{x\,}
\xxhu[0.1]55{\,y} \xxhu[0.1]28{z}
},\ldots \Bigg|\,x,y,z\in X,\alpha,\beta\in\Omega\right\}.
\end{align*}
For $T\in Y_{m}, U\in Y_{n}$, $x\in X$ and $\alpha,\beta\in\Omega^{1},$ the grafting $\Ve x,\alpha,\beta;$ of $T$ and $U$ over $x$ and $(\alpha,\beta)$ is defined to be $T\Ve x,\alpha,\beta; U\in Y_{m+n+1}$, obtained by adding a new vertex decorated by $x$ and joining the roots of $T$ and $U$ with the new vertex via two new edges decorated by $\alpha$ and $\beta$ respectively.

Conversely, for $T\in Y_n$ not equal to $|$, there is a unique decomposition
$$T=T^{l}\Ve x,\alpha,\beta; T^{r}\,\text{ for some }\, x\in X\,\text{ and }\, \alpha,\beta\in \Omega^{1}.$$

\noindent Denote by
$$\DDF:=\underset{n\geq 1}\bigoplus\,\bfk Y_{n}.$$

\begin{defn}\mlabel{defn:recur}
Let $X$ be a set and let $\Omega$ be a semigroup. Slightly extending \cite[Paragraph 3.1]{ZGM}, we define binary operations
$$\prec_\omega, \succ_\omega:\Big(\DDF\ot\DDF\Big)\oplus\Big(\bfk |\ot\DDF\Big)\oplus \Big(\DDF\ot\bfk |\Big)\ra \DDF\,\text{ for }\, \omega\in\Omega$$
recursively on $\dep(T)+\dep(U)$ by
\begin{enumerate}
\item $|\succ_\omega T:=T\prec_\omega |:=T\,\text{ and }\, |\prec_\omega T:=T\succ_\omega |:=0$ for $\omega\in\Omega$ and $T\in Y_{n}$ with $n\geq 1$.\mlabel{it:intial1}

\item For $T=T^{l}\Ve x,\alpha_1,\alpha_2; T^{r}$ and $U=U^{l}\Ve y,\beta_1,\beta_2; U^{r},$ define
\begin{align}
T\prec_\omega U:=& \ T^{l}\Ve x,\alpha_1,\alpha_2\omega; (T^{r}\prec_\omega U+T^{r}\succ_{\alpha_2} U),\mlabel{eq:recur1}\\
T\succ_\omega U:=& \ (T\prec_{\beta_1} U^{l}+T\succ_\omega U^{l})\Ve y,\omega\beta_1,\beta_2; U^{r},\,\text{ where }\, \omega \in \Omega.\mlabel{eq:recur}
\end{align}
\end{enumerate}
\end{defn}
\noindent Note that $|\prec_\omega |$ and $|\succ_\omega |$ are not defined for $\omega\in\Omega^1.$
Here we apply the convention
\begin{equation}
|\succ_1 T:=T\prec_1 |:=T\,\text{ and }\, |\prec_1 T:=T\succ_1 |:=0.
\mlabel{eq:conv}
\end{equation}
\noindent Let $j: X \ra \DDF$ be the map defined by $j(x)=\stree x$ for $x\in X$.
\begin{theorem}\mlabel{thm:free1}\cite{ZGM}
Let $X$ be a set and let $\Omega$ be a semigroup. Then $(\DDF,(\prec_\omega, \succ_\omega)_{\omega\in\Omega})$, together with the map $j$, is the free dendriform family algebra on $X$.
\end{theorem}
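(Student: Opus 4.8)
The plan is to prove two independent things: that $(\DDF,(\prec_\omega,\succ_\omega)_{\omega\in\Omega})$ is a genuine dendriform family algebra, and that it enjoys the universal property making it free on $X$ through $j$. The first is an internal consistency check of the recursive definition in Definition~\mref{defn:recur}; the second splits, as usual, into uniqueness (generation by $j(X)$) and existence (construction of the extension morphism).

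For the first part I would verify the three identities of Definition~\mref{defn:dend} simultaneously by induction on the total number of internal vertices of the three trees $x,y,z$. Writing each tree in its canonical form $T=T^{l}\Ve x,\alpha_1,\alpha_2;T^{r}$ and expanding both sides of each axiom through the recursive rules \mref{eq:recur1} and \mref{eq:recur}, the computation reduces to matching the outermost peeled vertex on each side and then invoking the inductive hypothesis on the strictly smaller subtrees. Since $\prec_\omega$ recurses on the left factor while $\succ_\omega$ recurses on the right factor, the reduction of axiom~(a) is driven by the structure of $x$, that of axiom~(c) by the structure of $z$, and axiom~(b) mixes the two; in every case the smaller triples that arise (e.g.\ $(x^{r},y,z)$ for axiom~(a)) require all three axioms at the previous stage together with associativity of $\Omega$. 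The boundary cases, where a relevant subtree equals the empty tree $|$, are handled by the conventions \mref{eq:conv} and part~\mref{it:intial1} of Definition~\mref{defn:recur}. This bookkeeping across cases is the bulk of the labor and the main obstacle.

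For freeness, the structural fact I would establish first is the generation formula
\[
T=(T^{l}\succ_{\alpha}\stree x)\prec_{\beta}T^{r}\quad\text{for } T=T^{l}\Ve x,\alpha,\beta;T^{r},
\]
obtained by one application each of \mref{eq:recur} and \mref{eq:recur1}: computing $T^{l}\succ_{\alpha}\stree x=T^{l}\Ve x,\alpha,1;|$ and then $\prec_\beta$-grafting $T^{r}$ onto the right, reading empty branches through the conventions. Because the canonical branches $T^{l},T^{r}$ have strictly fewer vertices than $T$, this formula shows by induction that $\DDF$ is generated as a dendriform family algebra by $j(X)$; consequently any family-algebra morphism out of $\DDF$ is pinned down by its values on the generators, which gives uniqueness.

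For existence, given a dendriform family algebra $(D,(\prec'_\omega,\succ'_\omega)_{\omega\in\Omega})$ and a map $f\colon X\to D$, I would define $\Phi\colon\DDF\to D$ recursively by $\Phi(\stree x):=f(x)$ and
\[
\Phi(T^{l}\Ve x,\alpha,\beta;T^{r}):=\bigl(\Phi(T^{l})\succ'_{\alpha}f(x)\bigr)\prec'_{\beta}\Phi(T^{r}),
\]
interpreting empty branches through the unital conventions (an empty left branch deletes $\Phi(T^{l})\succ'_\alpha$, an empty right branch deletes $\prec'_\beta\Phi(T^{r})$). This is well defined since the branches are strictly smaller, and $\Phi\circ j=f$ holds by construction. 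It then remains to check that $\Phi$ is a morphism, that is $\Phi(T\prec_\omega U)=\Phi(T)\prec'_\omega\Phi(U)$ and $\Phi(T\succ_\omega U)=\Phi(T)\succ'_\omega\Phi(U)$. I would prove this by induction on $\dep(T)+\dep(U)$, expanding the left-hand sides by \mref{eq:recur1} and \mref{eq:recur}, pushing $\Phi$ through its defining recursion, and collapsing the outcome using precisely the three axioms of Definition~\mref{defn:dend}, now available in $D$. Uniqueness from the generation lemma and this existence together yield the asserted universal property.
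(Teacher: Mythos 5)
Your proposal is correct, but be aware that there is no in-paper proof to measure it against: Theorem~\mref{thm:free1} is stated here as an imported result, cited from \cite{ZGM}, and this paper never proves it. What you have written is, in outline, the standard free-object argument (and essentially the route taken in \cite{ZGM}, where the recursion of Definition~\mref{defn:recur} originates), so your blind reconstruction would make this part of the paper self-contained where the authors chose brevity. The load-bearing steps all check out. Your generation formula is right: by Eq.~(\mref{eq:recur}) and the conventions, $T^l\succ_\alpha\stree x=(T^l\prec_1 |+T^l\succ_\alpha |)\Ve x,\alpha,1;|=T^l\Ve x,\alpha,1;|$, and then Eq.~(\mref{eq:recur1}) gives $(T^l\Ve x,\alpha,1;|)\prec_\beta T^r=T^l\Ve x,\alpha,\beta;(|\prec_\beta T^r+|\succ_1 T^r)=T$, with the degenerate cases $T^l=|$ or $T^r=|$ absorbed by Item~\mref{it:intial1} of Definition~\mref{defn:recur} and Eq.~(\mref{eq:conv}). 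In your existence step the three axioms of Definition~\mref{defn:dend} are consumed exactly where you say: peeling the root of $T$ reduces $\Phi(T\prec_\omega U)$ to $\Phi(T)\prec'_\omega\Phi(U)$ via axiom (a) in the target, and peeling the root of $U$ reduces $\Phi(T\succ_\omega U)$ via axiom (c) followed by axiom (b); well-definedness of $\Phi$ follows from the uniqueness of the decomposition $T=T^l\Ve x,\alpha,\beta;T^r$.

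One refinement to your bookkeeping in the first part. For axiom (a), with $x=x^l\Ve a,\gamma_1,\gamma_2;x^r$, both sides peel to trees with left branch $x^l$ and right edge typed $\gamma_2\alpha\beta$ (this is where associativity of $\Omega$ enters), and the remaining identity is exactly axioms (a), (b), (c) applied to the strictly smaller triple $(x^r,y,z)$, as you predicted; axiom (c) behaves dually with $z=z^l\Ve c,\epsilon_1,\epsilon_2;z^r$ and the triple $(x,y,z^l)$. But axiom (b) does not need the induction hypothesis at all: with $y=y^l\Ve b,\delta_1,\delta_2;y^r$, both $(x\succ_\alpha y)\prec_\beta z$ and $x\succ_\alpha(y\prec_\beta z)$ unfold by Eqs.~(\mref{eq:recur}) and (\mref{eq:recur1}) to the same expression $(x\prec_{\delta_1}y^l+x\succ_\alpha y^l)\Ve b,\alpha\delta_1,\delta_2\beta;(y^r\prec_\beta z+y^r\succ_{\delta_2}z)$ (with empty branches of $y$ handled by the conventions), so it holds by direct structural matching rather than by recursion. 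This only simplifies your plan; the simultaneous induction closes, and together with the generation lemma (uniqueness) and the recursively defined $\Phi$ (existence) it yields the universal property as claimed.
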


It is well-known that a Rota-Baxter algebra of weight $0$ induces a dendriform algebra~\cite{Agu99, EbFa}. Similarly, we have the following fact.

\begin{lemma}\mlabel{lem:dendt}\cite{ZG}
Let $X$ be a set and let $\Omega$ be a semigroup. The Rota-Baxter family algebra $\bigl(\bfk\calt,\diamond,\,(B^{+}_{\omega})_{\omega \in \Omega}\bigr)$ of weight $0$ induces a dendriform family algebra
$\bigl(\bfk\calt, \, (\prec'_\omega, \succ'_\omega)_{\omega\in\Omega}\bigr)$, where
\begin{align}
T\prec'_{{\omega}}U : = & \ T\diamond B^{+}_{\omega}(U)\,\text{ and }\, T\succ'_{{\omega}}U := B^{+}_{\omega}(T) \diamond U,\text{ for }\, T, U \in \calt.
\mlabel{eq:den}
\end{align}
\end{lemma}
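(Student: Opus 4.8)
The plan is to verify directly the three defining axioms of a dendriform family algebra (Definition~\ref{defn:dend}) for the operations $\prec'_\omega,\succ'_\omega$ introduced in Eq.~\eqref{eq:den}. By Theorem~\ref{thm:free} the triple $\bigl(\bfk\calt,\diamond,(B^{+}_\omega)_{\omega\in\Omega}\bigr)$ is a Rota-Baxter family algebra of weight $0$, and the whole proof rests on exactly two of its features: first, that $\diamond$ is associative; second, that the Rota-Baxter family relation \eqref{eq:RBF} at $\lambda=0$ takes the form
\[
B^{+}_\alpha(a)\diamond B^{+}_\beta(b)=B^{+}_{\alpha\beta}\bigl(B^{+}_\alpha(a)\diamond b+a\diamond B^{+}_\beta(b)\bigr),\qquad a,b\in\bfk\calt,\ \alpha,\beta\in\Omega.
\]
Since each $B^{+}_\omega$ is a linear endomorphism of $\bfk\calt$ and $\diamond$ is everywhere defined, the maps $\prec'_\omega,\succ'_\omega$ are well-defined bilinear operations on $\bfk\calt$, so the verification is purely formal.

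For the first axiom I would expand the left-hand side $(T\prec'_\alpha U)\prec'_\beta V=(T\diamond B^{+}_\alpha(U))\diamond B^{+}_\beta(V)$, reassociate to isolate the subword $B^{+}_\alpha(U)\diamond B^{+}_\beta(V)$, and apply the weight-$0$ relation to rewrite it as $T\diamond B^{+}_{\alpha\beta}\bigl(B^{+}_\alpha(U)\diamond V+U\diamond B^{+}_\beta(V)\bigr)$. Unfolding the definitions of $\prec'$ and $\succ'$ on the right-hand side $T\prec'_{\alpha\beta}\bigl(U\prec'_\beta V+U\succ'_\alpha V\bigr)$ produces the identical expression. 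The third axiom is symmetric: expanding the right-hand side $T\succ'_\alpha(U\succ'_\beta V)=B^{+}_\alpha(T)\diamond\bigl(B^{+}_\beta(U)\diamond V\bigr)$, reassociating, and applying the same relation to $B^{+}_\alpha(T)\diamond B^{+}_\beta(U)$ recovers the left-hand side $\bigl(T\prec'_\beta U+T\succ'_\alpha U\bigr)\succ'_{\alpha\beta}V$. The second axiom, $(T\succ'_\alpha U)\prec'_\beta V=T\succ'_\alpha(U\prec'_\beta V)$, is simpler still: both sides equal $B^{+}_\alpha(T)\diamond U\diamond B^{+}_\beta(V)$, so it follows from associativity of $\diamond$ alone, with no use of the Rota-Baxter relation.

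I do not anticipate any genuine obstacle. The content is precisely the classical observation that a weight-$0$ Rota-Baxter operator splits an associative product into a dendriform structure, and the family-indexed version goes through verbatim because the semigroup multiplication $\alpha\beta$ appearing in the Rota-Baxter family relation matches exactly the index $\alpha\beta$ on the outer operations in the dendriform family axioms. The one point that demands care is bookkeeping: keeping track of which index ($\alpha$, $\beta$, or $\alpha\beta$) decorates each occurrence of $B^{+}$. This matching is dictated unambiguously by Eq.~\eqref{eq:RBF}, so once the reassociations are performed the indices align automatically.
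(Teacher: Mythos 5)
Your verification is correct: the three axioms of Definition~\ref{defn:dend} follow exactly as you describe, the first and third from associativity of $\diamond$ plus the weight-$0$ instance of Eq.~(\ref{eq:RBF}) applied to $B^{+}_\alpha(U)\diamond B^{+}_\beta(V)$ (resp.\ $B^{+}_\alpha(T)\diamond B^{+}_\beta(U)$), and the second from associativity alone, with the semigroup indices aligning as you note. The paper itself gives no proof of this lemma --- it is quoted from \cite{ZG} --- and your direct computation is precisely the standard argument there (the family version of the Aguiar--Ebrahimi-Fard splitting of a weight-zero Rota--Baxter product), with no circularity in invoking Theorem~\ref{thm:free}, since that theorem is proved via the bijection with $\frak{X}_\infty$ and does not depend on this lemma.
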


Now we arrive at our main result.
\begin{theorem}\mlabel{thm:sub1}
Let $X$ be a set and let $\Omega$ be a semigroup. The free dendriform family algebra $\bigl(\DDF,(\prec_\omega, \succ_\omega)_{\omega\in\Omega}\bigr)$ on $X$ is a dendriform family subalgebra of the free Rota-Baxter family algebra $\bigl(\bfk\calt,\diamond,(B^{+}_\omega)_{\omega\in\Omega}\bigr)$ of weight $0$.
\end{theorem}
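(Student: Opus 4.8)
The plan is to show that the natural $\bfk$-linear inclusion $\iota\colon\DDF\hookrightarrow\bfk\calt$, which regards an $X$-angularly decorated $\Omega^1$-typed planar binary tree in $Y_n$ as the corresponding element of $\caltn$, is a morphism of dendriform family algebras onto its image, where $\bfk\calt$ carries the induced operations $(\prec'_\omega,\succ'_\omega)_{\omega\in\Omega}$ of Lemma~\mref{lem:dendt} of weight $0$. Concretely, it suffices to prove that for all binary trees $T,U\in\DDF$ and all $\omega\in\Omega$,
\[
T\prec'_\omega U=T\prec_\omega U\qquad\text{and}\qquad T\succ'_\omega U=T\succ_\omega U,
\]
the left-hand sides being computed in $\bfk\calt$ by $T\prec'_\omega U=T\diamond B^{+}_\omega(U)$ and $T\succ'_\omega U=B^{+}_\omega(T)\diamond U$ (Eq.~(\mref{eq:den})), the right-hand sides being the dendriform family operations of Definition~\mref{defn:recur}. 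Since the right-hand sides lie in $\DDF$, these identities simultaneously show that $\iota(\DDF)$ is closed under $\prec'_\omega,\succ'_\omega$ and that $\iota$ intertwines the two structures, which is exactly the assertion that $\DDF$ is a dendriform family subalgebra of $\bfk\calt$; the identification with the free dendriform family algebra then comes from Theorem~\mref{thm:free1}.

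I would prove the two identities simultaneously by induction on $\dep(T)+\dep(U)\ge 2$. Write the unique decompositions $T=T^{l}\Ve x,\alpha_1,\alpha_2; T^{r}$ and $U=U^{l}\Ve y,\beta_1,\beta_2; U^{r}$, and unfold $T\diamond B^{+}_\omega(U)$ through the inductive definition of $\diamond$. Because $U\neq |$, the single subtree $U$ of $B^{+}_\omega(U)$ is nonempty, so only Case~3 and Case~4 can occur. If $T^{r}=|$ (Case~3, Eq.~(\mref{eq:def5})), then $\alpha_2=1$ and the product collapses directly to $T^{l}\Ve x,\alpha_1,\omega; U$; this matches Eq.~(\mref{eq:recur1}) on noting $T^{r}\prec_\omega U=|\prec_\omega U=0$ and $T^{r}\succ_{\alpha_2}U=|\succ_1 U=U$ by the conventions in Definition~\mref{defn:recur}~\mref{it:intial1} and Eq.~(\mref{eq:conv}). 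If $T^{r}\neq|$ (Case~4, Eq.~(\mref{eq:def6})), then after discarding the weight term and applying Case~3 to the outer product the expression reduces to
\[
T^{l}\Ve x,\alpha_1,\alpha_2\omega; \Big(B^{+}_{\alpha_2}(T^{r})\diamond U+T^{r}\diamond B^{+}_\omega(U)\Big)=T^{l}\Ve x,\alpha_1,\alpha_2\omega; \big(T^{r}\succ'_{\alpha_2}U+T^{r}\prec'_\omega U\big),
\]
and the inductive hypothesis (valid since $\dep(T^{r})<\dep(T)$) rewrites the inner sum as $T^{r}\succ_{\alpha_2}U+T^{r}\prec_\omega U$, which is precisely Eq.~(\mref{eq:recur1}). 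The identity for $\succ'$ is entirely symmetric: one unfolds $B^{+}_\omega(T)\diamond U$, finds that only Case~2 ($U^{l}=|$) and Case~4 ($U^{l}\neq|$) arise, and matches the outcome with Eq.~(\mref{eq:recur}) using the analogous recursive calls $T\succ'_\omega U^{l}$ and $T\prec'_{\beta_1}U^{l}$.

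The step I expect to be the main obstacle, and the place where the weight-$0$ hypothesis is indispensable, is Case~4 of the definition of $\diamond$. There the bracketed argument of $B^{+}_{\alpha_2\omega}$ in Eq.~(\mref{eq:def6}) carries a third summand $\lambda\,T^{r}\diamond U$; for binary $T^{r},U$ the product $T^{r}\diamond U$ merges the two binary roots into a vertex of valence at least three and hence leaves the subspace of binary trees, so this summand would simultaneously destroy the invariance of $\DDF$ and break the two-term shape of the dendriform recursions. Setting $\lambda=0$ annihilates exactly this term, leaving the two summands that match Eqs.~(\mref{eq:recur1}) and~(\mref{eq:recur}). Beyond this, the remaining care is bookkeeping: handling the degenerate $n=0$ instances in the definition of $\diamond$ (where the trailing factor is the unit tree for $\diamond$ and may be dropped) and checking that the recursive calls strictly decrease $\dep(T)+\dep(U)$, which holds because $T^{r}$ and $U^{l}$ are proper subtrees of $T$ and $U$ respectively.
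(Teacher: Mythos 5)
Your proposal is correct and takes essentially the same route as the paper's own proof: the same induction on $\dep(T)+\dep(U)$, the same case split on whether $T^{r}$ (resp.\ $U^{l}$) equals $\vert$, the same use of the conventions for $\prec_1,\succ_1$, and the same appeal to the weight-zero Rota-Baxter family identity (equivalently, dropping the $\lambda$-term of Eq.~(\mref{eq:def6})) in the only nontrivial case, where you also correctly identify why $\lambda=0$ is indispensable. The one economy is your observation that closure of $\DDF$ under $\prec'_\omega,\succ'_\omega$ follows automatically from the identities $\prec'_\omega=\prec_\omega$ and $\succ'_\omega=\succ_\omega$, since the latter take values in $\DDF$ by Definition~\mref{defn:recur}; the paper instead establishes closure first by a separate preliminary induction, which your argument renders redundant.
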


\begin{proof}
By Lemma~\mref{lem:dendt}, a Rota-Baxter family algebra $\bigl(\bfk\calt,\diamond,\,$ $(B^{+}_{\omega})_{\omega \in \Omega}\bigr)$ of weight $0$ induces a dendriform family algebra
$\bigl(\bfk\calt, \, (\prec'_\omega, \succ'_\omega)_{\omega\in\Omega}\bigr)$. Now we prove that $\bigl(\DDF, (\prec'_\omega, \succ'_\omega)_{\omega\in\Omega}\bigr)$ is a dendriform family subalgebra of $\bigl(\bfk\calt, \, (\prec'_\omega, \succ'_\omega)_{\omega\in\Omega}\bigr)$.

For $T=T^l\Ve x,\alpha_1,\alpha_2;T^r$ and $U=U^l\Ve y,\beta_1,\beta_2;U^r\in\DDF$
, we use induction on $\dep(T)+\dep(U)\geq 2$. For the initial step $\dep(T)+\dep(U)=2$, we have $T=\stree x$ and $U=\stree y.$ Then
\begin{align*}
T\prec'_\omega U&=T\diamond B^+_\omega(U)=\stree x\diamond B^+_\omega\Big(\stree y\Big)=\stree x\diamond \treeoo{\cdb[1.5] o\zhd{o/b}\cdlr o
\node[above=1pt] at (o) {$y$};
\node[left] at ($(o)!0.3!(ob)$) {$\omega$};
}=\XX{\xxl55
\node at (0.4,0) {$\omega$};
\xxhu00x \xxhu55y
}\in\DDF,\\
T\succ'_\omega U&=B^+_\omega(T)\diamond U=B^+_\omega\Big(\stree x\Big)\diamond\stree y
=\treeoo{\cdb[1.5] o\zhd{o/b}\cdlr o
\node[above=1pt] at (o) {$x$};
\node[left] at ($(o)!0.3!(ob)$) {$\omega$};
}\diamond \stree y=\XX{\xxr{-5}5
\node at (-0.45,0) {$\omega$};
\xxhu00y \xxhu{-5}5{x}
}\in\DDF.
\end{align*}
For the induction step $\dep(T)+\dep(U)\geq 3$, we only prove $T\prec'_\omega U\in\DDF,$ as $T\succ'_\omega U\in\DDF$ can be obtained similarly.

\noindent If $T^r=|$, then by Eq.~(\mref{eq:def5}),
\begin{align*}
T\prec'_\omega U&=T\diamond B^+_\omega(U)=\treeoo{\cdb o\ocdl[1.5]o{T^{l}}\cdr[1.5]o
\node[left=2pt] at ($(o)!0.25!(ol)$) {$\alpha_1$};
\node[above=2pt] at (o) {$x$};
}\diamond B^{+}_\omega(U)
=\treeoo{\cdb o\ocdl[1.5]o{T^{l}}\ocdr[1.5]o{U}
\node[left=2pt] at ($(o)!0.25!(ol)$) {$\alpha_1$};
\node[right=2pt] at ($(o)!0.25!(or)$) {$\omega$};
\node[above=2pt] at (o) {$x$};
}\in\DDF.
\end{align*}
%By the induction hypothesis, $T^l, U\in\DDF$ and so $T\prec'_\omega U\in\DDF.$

\noindent If $T^r\neq |$, then
\begin{align*}
T\prec'_\omega U&=T\diamond B^+_\omega(U)=\treeoo{\cdb o\ocdl[1.5]o{T^{l}}\ocdr[1.5]o{T^{r}}
\node[left=2pt] at ($(o)!0.25!(ol)$) {$\alpha_1$};
\node[right=2pt] at ($(o)!0.25!(or)$) {$\alpha_2$};
\node[above=2pt] at (o) {$x$};
}\diamond B^{+}_\omega(U)\\
&=\Big(\treeoo{\cdb o\ocdl[1.5]o{T^{l}}\cdr[1.5]o
\node[left=2pt] at ($(o)!0.25!(ol)$) {$\alpha_1$};
\node[above=2pt] at (o) {$x$};
}\diamond B^{+}_{\alpha_2}(T^{r})\Big)\diamond B^{+}_\omega(U)\quad\text{(by Eq.~(\mref{eq:def5}))}\\
&=\treeoo{\cdb o\ocdl[1.5]o{T^{l}}\cdr[1.5]o
\node[left=2pt] at ($(o)!0.25!(ol)$) {$\alpha_1$};
\node[above=2pt] at (o) {$x$};
}\diamond \Big(B^{+}_{\alpha_2}(T^{r})\diamond B^{+}_\omega(U)\Big)\\
&=\treeoo{\cdb o\ocdl[1.5]o{T^{l}}\cdr[1.5]o
\node[left=2pt] at ($(o)!0.25!(ol)$) {$\alpha_1$};
\node[above=2pt] at (o) {$x$};
}\diamond B^{+}_{\alpha_2\omega}\Big(T^{r}\diamond B^{+}_\omega(U)+B^{+}_{\alpha_2}(T^{r})\diamond U\Big)\quad(\text{by Theorem ~\mref{thm:free} of $\lambda=0$)}\\
&=\treeoo{\cdb o\ocdl[1.5]o{T^{l}}\ocdr[1.5]o{T^{r}\diamond B^{+}_\omega(U)}
\node[left=2pt] at ($(o)!0.25!(ol)$) {$\alpha_1$};
\node[right=2pt] at ($(o)!0.25!(or)$) {$\alpha_2\omega$};
\node[above=2pt] at (o) {$x$};
}+
\treeoo{\cdb o\ocdl[1.5]o{T^{l}}\ocdr[1.5]o{B^{+}_{\alpha_2}(T^{r})\diamond U}
\node[left=2pt] at ($(o)!0.25!(ol)$) {$\alpha_1$};
\node[right=2pt] at ($(o)!0.25!(or)$) {$\alpha_2\omega$};
\node[above=2pt] at (o) {$x$};
}\quad\text{(by Eq.~(\mref{eq:def5}))}\\
&=\treeoo{\cdb o\ocdl[1.5]o{T^{l}}\ocdr[1.5]o{T^{r}\prec'_\omega U}
\node[left=2pt] at ($(o)!0.25!(ol)$) {$\alpha_1$};
\node[right=2pt] at ($(o)!0.25!(or)$) {$\alpha_2\omega$};
\node[above=2pt] at (o) {$x$};
}+
\treeoo{\cdb o\ocdl[1.5]o{T^{l}}\ocdr[1.5]o{T^{r}\succ'_{\alpha_2} U}
\node[left=2pt] at ($(o)!0.25!(ol)$) {$\alpha_1$};
\node[right=2pt] at ($(o)!0.25!(or)$) {$\alpha_2\omega$};
\node[above=2pt] at (o) {$x$};
}\quad(\text{by Eq.~(\mref{eq:den})}).
\end{align*}
By the induction hypothesis, $T^{r}\prec'_\omega U,T^{r}\succ'_{\alpha_2}U\in\DDF$ and so $T\prec'_\omega U\in\DDF.$
Hence $\bigl(\DDF,(\prec'_\omega,\succ'_\omega)_{\omega\in\Omega}\bigr)$ is a dendriform family subalgebra of $\bigl(\bfk\calt,(\prec'_\omega,\succ'_\omega)_{\omega\in\Omega}\bigr)$. We are left to show
\begin{equation}
T\prec_\omega U=T\prec'_\omega U\,\text{ and }\, T\succ_\omega U=T\succ'_\omega U,
\mlabel{eq:prsu}
\end{equation}
for $T=T^l\Ve x,\alpha_1,\alpha_2;T^r$ and $U=U^l\Ve y,\beta_1,\beta_2;U^r.$
We use induction on $\dep(T)+\dep(U)\geq 2$.
For the initial step  $\dep(T)+\dep(U)\geq 2$, we have $T=\stree x$ and $U=\stree y.$ Then
\begin{align*}
T\prec_\omega U&=(|\Ve x,1,1;|)\prec_\omega \stree y\\
&=|\Ve x,1,\omega;\biggl(|\prec_\omega \stree y+|\succ_1\stree y\biggr)\quad(\text{by Eq.~(\mref{eq:recur1})})\\
&=|\Ve x,1,\omega;\stree y\quad(\text{by Item~\mref{it:intial1} of Definition~\mref{defn:recur} and Eq.~(\mref{eq:conv})})\\
&=\XX{\xxl55
\node at (0.4,0) {$\omega$};
\xxhu00x \xxhu55y
}=\stree x\diamond \treeoo{\cdb[1.5] o\zhd{o/b}\cdlr o
\node[above=1pt] at (o) {$y$};
\node[left] at ($(o)!0.3!(ob)$) {$\omega$};
}\quad\text{(by Eq.~(\mref{eq:def5}))}\\
&=\stree x\diamond B^{+}_\omega\Big(\stree y\Big)
=T\diamond B^{+}_\omega(U)=T\prec'_\omega U\quad(\text{by Eq.~(\mref{eq:den})}).
\end{align*}
Similarly,
\begin{align*}
T\succ_\omega U&=\stree x\succ_\omega (|\Ve y,1,1;|)\\
&=\biggl(\stree x\prec_1 |+\stree x\succ_\omega |\biggr)\Ve y,\omega,1;|\quad(\text{by Eq.~(\mref{eq:recur})})\\
&=\stree x\Ve y,\omega,1;|\quad(\text{by Item~\mref{it:intial1} of Definition~\mref{defn:recur} and Eq.~(\mref{eq:conv})})\\
&=\XX{\xxr{-5}5
\node at (-0.45,0) {$\omega$};
\xxhu00y \xxhu{-5}5{x}
}
=\treeoo{\cdb[1.5] o\zhd{o/b}\cdlr o
\node[above=1pt] at (o) {$x$};
\node[left] at ($(o)!0.3!(ob)$) {$\omega$};
}\diamond \stree y\quad\text{(by Eq.~(\mref{eq:def4}))}\\
&=B^{+}_\omega\Big(\stree x\Big)\diamond \stree y
=B^{+}_\omega(T)\diamond U
=T\succ'_\omega U\quad(\text{by Eq.~(\mref{eq:den})}).
\end{align*}
Consider the induction step of $\dep(T)+\dep(U)\geq 3.$ We first prove the first equation of Eq.~(\mref{eq:prsu}).
\noindent If $T^{r}=|$, then
\begin{align*}
T\prec_\omega U&=(T^{l}\Ve x,\alpha_1,1;|)\prec_\omega U\\
&=T^{l}\Ve x,\alpha_1,\omega;(|\prec_\omega U+|\succ_1U)\quad\text{(by Eq.~(\mref{eq:recur1}))}\\
&=T^{l}\Ve x,\alpha_1,\omega;U\quad(\text{by Item~\mref{it:intial1} of Definition~\mref{defn:recur} and Eq.~(\mref{eq:conv})})\\
&=\treeoo{\cdb o\ocdl[1.5]o{T^{l}}\ocdr[1.5]o{U}
\node[left=2pt] at ($(o)!0.25!(ol)$) {$\alpha_1$};
\node[right=2pt] at ($(o)!0.25!(or)$) {$\omega$};
\node[above=2pt] at (o) {$x$};
}
=\treeoo{\cdb o\ocdl[1.5]o{T^{l}}\cdr[1.5]o
\node[left=2pt] at ($(o)!0.25!(ol)$) {$\alpha_1$};
\node[above=2pt] at (o) {$x$};
}\diamond B^{+}_\omega(U)\quad\text{(by Eq.~(\mref{eq:def5}))}\\
&=T\diamond B^{+}_\omega(U)=T\prec'_\omega U\quad\text{(by Eq.~(\mref{eq:den}))}.
\end{align*}
\noindent If $T^{r}\neq |$, then
\begin{align*}
T\prec_\omega U&=(T^{l}\Ve x,\alpha_1,\alpha_2;T^{r})\prec_\omega U\\
&=T^{l}\Ve x,\alpha_1,\alpha_2\omega;(T^{r}\prec_\omega U+T^{r}\succ_{\alpha_2}U)\quad\text{(by Eq.~(\mref{eq:recur1}))}\\
&=T^{l}\Ve x,\alpha_1,\alpha_2\omega;(T^{r}\prec'_\omega U+T^{r}\succ'_{\alpha_2}U)\quad\text{(by the induction hypothesis)}\\
&=T^{l}\Ve x,\alpha_1,\alpha_2\omega;\Big(T^{r}\diamond B^{+}_\omega(U)+B^{+}_{\alpha_2}(T^{r})\diamond U\Big)\quad\text{(by Eq.~(\mref{eq:den}))}\\
&=\treeoo{\cdb o\ocdl[1.5]o{T^{l}}\ocdr[1.5]o{T^{r}\diamond B^{+}_\omega(U)}
\node[left=2pt] at ($(o)!0.25!(ol)$) {$\alpha_1$};
\node[right=2pt] at ($(o)!0.25!(or)$) {$\alpha_2\omega$};
\node[above=2pt] at (o) {$x$};
}+
\treeoo{\cdb o\ocdl[1.5]o{T^{l}}\ocdr[1.5]o{B^{+}_{\alpha_2}(T^{r})\diamond U}
\node[left=2pt] at ($(o)!0.25!(ol)$) {$\alpha_1$};
\node[right=2pt] at ($(o)!0.25!(or)$) {$\alpha_2\omega$};
\node[above=2pt] at (o) {$x$};
}\\
&=\treeoo{\cdb o\ocdl[1.5]o{T^{l}}\cdr[1.5]o
\node[left=2pt] at ($(o)!0.25!(ol)$) {$\alpha_1$};
\node[above=2pt] at (o) {$x$};
}\diamond B^{+}_{\alpha_2\omega}\Big(T^{r}\diamond B^{+}_\omega(U)+B^{+}_{\alpha_2}(T^{r})\diamond U\Big)\quad\text{(by Eq.~(\mref{eq:def5}))}\\
&=\treeoo{\cdb o\ocdl[1.5]o{T^{l}}\cdr[1.5]o
\node[left=2pt] at ($(o)!0.25!(ol)$) {$\alpha_1$};
\node[above=2pt] at (o) {$x$};
}\diamond \Big(B^{+}_{\alpha_2}(T^{r})\diamond B^{+}_\omega(U)\Big)\quad\text{(by Theorem ~\mref{thm:free} of $\lambda=0$)}\\
&=\Big(\treeoo{\cdb o\ocdl[1.5]o{T^{l}}\cdr[1.5]o
\node[left=2pt] at ($(o)!0.25!(ol)$) {$\alpha_1$};
\node[above=2pt] at (o) {$x$};
}\diamond B^{+}_{\alpha_2}(T^{r})\Big)\diamond B^{+}_\omega(U)\\
&=\treeoo{\cdb o\ocdl[1.5]o{T^{l}}\ocdr[1.5]o{T^{r}}
\node[left=2pt] at ($(o)!0.25!(ol)$) {$\alpha_1$};
\node[right=2pt] at ($(o)!0.25!(or)$) {$\alpha_2$};
\node[above=2pt] at (o) {$x$};
}\diamond B^{+}_\omega(U)\quad\text{(by Eq.~(\mref{eq:def5}))}\\
&=T\diamond B^{+}_\omega(U)=T\prec'_\omega U\quad\text{(by Eq.~(\mref{eq:den}))}.
\end{align*}
We next prove the second statement of Eq.~(\mref{eq:prsu}). If $U^{l}=|,$ then
\begin{align*}
T\succ_\omega U&=T\succ_\omega(|\Ve y,1,\beta_2;U^{r})\\
&=(T\succ_\omega |+T\prec_1|)\Ve y,\omega,\beta_2;U^{r}\quad\text{(by Eq.~(\mref{eq:recur}))}\\
&=T\Ve y,\omega,\beta_2;U^{r}\quad(\text{by Item~\mref{it:intial1} of Definition~\mref{defn:recur} and Eq.~(\mref{eq:conv})})\\
&=\treeoo{\cdb o\ocdl[1.5]o{T}\ocdr[1.5]o{U^{r}}
\node[left=2pt] at ($(o)!0.25!(ol)$) {$\omega$};
\node[right=2pt] at ($(o)!0.25!(or)$) {$\beta_2$};
\node[above=2pt] at (o) {$y$};
}
=B^{+}_\omega(T)\diamond \treeoo{\cdb o\cdl[1.5]o\ocdr[1.5]o{U^{r}}
%\node[left=2pt] at ($(o)!0.25!(ol)$) {$\omega$};
\node[right=2pt] at ($(o)!0.25!(or)$) {$\beta_2$};
\node[above=2pt] at (o) {$y$};
}\quad\text{(by Eq.~(\mref{eq:def4}))}\\
&=B^{+}_\omega(T)\diamond U=T\succ'_\omega U\quad\text{(by Eq.~(\mref{eq:den}))}.
\end{align*}

\noindent If $U^{l}\neq |$, then
\begin{align*}
T\succ_\omega U&=T\succ_\omega(U^{l}\Ve y,\beta_1,\beta_2;U^{r})\\
&=(T\succ_\omega U^{l}+T\prec_{\beta_1}U^{l})\Ve y,\omega\beta_1,\beta_2;U^{r}\quad\text{(by Eq.~(\mref{eq:recur}))}\\
&=(T\succ'_\omega U^{l}+T\prec'_{\beta_1}U^{l})\Ve y,\omega\beta_1,\beta_2;U^{r}\quad\text{(by the induction hypothesis)}\\
&=\Big(B^{+}_\omega(T)\diamond U^{l}+T\diamond B^{+}_{\beta_1}(U^{l})\Big)\Ve y,\omega\beta_1,\beta_2;U^{r}\quad\text{(by Eq.~\mref{eq:den})}\\
&=\treeoo{\cdb o\ocdl[1.5]o{B^{+}_\omega(T)\diamond U^{l}}\ocdr[1.5]o{U^{r}}
\node[left=2pt] at ($(o)!0.25!(ol)$) {$\omega\beta_1$};
\node[right=2pt] at ($(o)!0.25!(or)$) {$\beta_2$};
\node[above=2pt] at (o) {$y$};
}+\treeoo{\cdb o\ocdl[1.5]o{T\diamond B^{+}_{\beta_1}(U^{l})}\ocdr[1.5]o{U^{r}}
\node[left=2pt] at ($(o)!0.25!(ol)$) {$\omega\beta_1$};
\node[right=2pt] at ($(o)!0.25!(or)$) {$\beta_2$};
\node[above=2pt] at (o) {$y$};
}\\
&=B^{+}_{\omega\beta_1}\Big(B^{+}_\omega(T)\diamond U^{l}+T\diamond B^{+}_{\beta_1}(U^{l})\Big)\diamond \treeoo{\cdb o\cdl[1.5]o\ocdr[1.5]o{U^{r}}
%\node[left=2pt] at ($(o)!0.25!(ol)$) {$\omega$};
\node[right=2pt] at ($(o)!0.25!(or)$) {$\beta_2$};
\node[above=2pt] at (o) {$y$};
}\quad\text{(by Eq.~(\mref{eq:def4}))}\\
&=\Big(B^{+}_\omega(T)\diamond B^{+}_{\beta_1}(U^{l})\Big)\diamond \treeoo{\cdb o\cdl[1.5]o\ocdr[1.5]o{U^{r}}
%\node[left=2pt] at ($(o)!0.25!(ol)$) {$\omega$};
\node[right=2pt] at ($(o)!0.25!(or)$) {$\beta_2$};
\node[above=2pt] at (o) {$y$};
}\quad\text{(by Theorem ~\mref{thm:free} of weight $0$)}\\
&=B^{+}_\omega(T)\diamond \Big(B^{+}_{\beta_1}(U^{l})\diamond \treeoo{\cdb o\cdl[1.5]o\ocdr[1.5]o{U^{r}}
%\node[left=2pt] at ($(o)!0.25!(ol)$) {$\omega$};
\node[right=2pt] at ($(o)!0.25!(or)$) {$\beta_2$};
\node[above=2pt] at (o) {$y$};
}\Big)\\
&=B^{+}_\omega(T)\diamond \treeoo{\cdb o\ocdl[1.5]o{U^{l}}\ocdr[1.5]o{U^{r}}
\node[left=2pt] at ($(o)!0.25!(ol)$) {$\beta_1$};
\node[right=2pt] at ($(o)!0.25!(or)$) {$\beta_2$};
\node[above=2pt] at (o) {$y$};
}\quad\text{(by Eq.~(\mref{eq:def4}))}\\
&=B^{+}_\omega(T)\diamond U=T\succ'_\omega U\quad\text{(by Eq.~(\mref{eq:den}))}.
\end{align*}

This completes the proof.
\end{proof}

\subsection{Embedding free TFAs into free RBFAs}
The concept of a tridendriform family algebra was introduced in~\cite{ZG}, which is a generalization of a tridendriform algebra invented by Loday and Ronco~~\mcite{LoRo04} in the study of polytopes and Koszul duality.

\begin{defn}\cite{ZG}\label{defn:tridend}
Let $\Omega$ be a semigroup. A {\bf tridendriform family algebra} is a $\bfk$-module $T$ equipped with
a family of binary operations $(\prec_{\omega}, \succ_\omega)_{\omega \in \Omega}$
and a binary operation $\cdot$ such that for $ x, y, z\in T$ and $\alpha,\beta\in \Omega$,
\begin{align*}
(x\prec_{\alpha} y)\prec_{\beta} z=\ & x\prec_{\bim{\alpha \beta}} (y\prec_{\beta} z+y \succ_{\alpha} z + y \cdot z),\\ %\mlabel{eq:tdf1}\\
(x\succ_{\alpha} y)\prec_{\beta} z=\ & x\succ_{\alpha}(y\prec_{\beta} z), \\ %\mlabel{eq:tdf2}\\
(x\prec_{\beta} y + x\succ_{\alpha} y + x \cdot y) \succ_{\bim{\alpha \beta}}z = \ & x\succ_{\alpha} (y\succ_{\beta} z),\\ %\mlabel{eq:tdf3} \\
(x\succ_{\alpha} y)\cdot  z=\  & x\succ_{\alpha}(y \cdot  z),\\ %\mlabel{eq:tdf4}\\
(x\prec_{\alpha} y)\cdot  z= \ & x \cdot (y\succ_{\alpha} z),\\ %\mlabel{eq:tdf5}\\ %\nonumber
(x\cdot y)\prec_{\alpha} z= \ & x\cdot (y\prec_{\alpha} z),\\ %\mlabel{eq:tdf6}\\ %\nonumber
(x\cdot y)\cdot  z= \ & x\cdot  (y\cdot  z).%\mlabel{eq:tdf7}
\end{align*}
\end{defn}

Let $X$ be a set and let $\Omega$ be a semigroup.
For $n\geq 1,$ let
$$T_{n}:=T_{n,\,X,\,\Omega}:=\caltn\cap\,\{\text{Schr\"oder trees}\},$$
 be the set of $X$-angularly
decorated $\Omega^{1}$-typed Schr\"oder trees with $n+1$ leaves, such that
the leaves are decorated by the identity 1 in $\Omega^{1}$ and internal edges are decorated by elements of $\Omega$.
Here are some examples. Note that the edge decoration 1 is omitted for convenience.
\begin{align*}
%T_0=& \ \{|\},\qquad
T_1=& \ \left\{\stree x\Bigm|x\in X\right\},
\qquad T_2=\left\{
\XX{\xxr{-5}5
\node at (-0.4,0) {$\alpha$};
\xxhu00x \xxhu{-5}5y
},
\XX{\xxl55
\node at (0.4,0) {$\alpha$};
\xxhu00x \xxhu55y
},
\XX{\xx002
\xxh0023{x\ }{0.5} \xxh0012{\ \,y}{0.4}
}\Bigm|x,y\in X,\alpha\in\Omega
\right\},\\
T_3=& \ \left\{
\XX[scale=1.6]{\xxr{-4}4\xxr{-7.5}{7.5}
\node at (-0.3,0) {$\alpha$};
\node at (-0.72,0.4) {$\beta$};
\xxhu00x \xxhu[0.1]{-4}4{\,y} \xxhu[0.1]{-7.5}{7.5}{z}
},
\XX[scale=1.6]{\xxr{-5}5\xxl{-2}8
\node at (-0.4,0.1) {$\alpha$};
\node at (-0.2,0.55) {\tiny $\beta$};
\xxhu00x
\xxhu[0.1]{-5}5{y} \xxhu[0.1]{-2}8{z}
},
\XX[scale=1.6]{\xxr{-4}4\xx{-4}42
\node at (-0.3,0) {$\alpha$};
\xxhu00x \xxh{-4}423{y\ \,}{0.3} \xxh{-4}412{\ \, z}{0.3}
},
\XX[scale=1.6]{\xx00{1.6}\xx00{2.4}
\xxh001{1.6}{\ \ \,z}{0.6}
\xxh00{1.6}{2.4}{y}{0.5}
\xxh00{2.4}3{x\ \ }{0.6}
},
\XX[scale=1.6]{\xxlr0{7.5} \draw(0,0)--(0,0.75);
\xxh0023{x\ \,}{0.3} \xxhu[0.12]0{7.5}{z}
\node at (0.16,0.35) {\tiny $y$};
\node at (0.17,0.62) {\tiny $\alpha$};
},
\ldots\Bigg|\,x,y,z\in X,\alpha,\beta\in\Omega
\right\}.
\end{align*}
Denote by $$\DTF:=\bigoplus_{n\geq 1}\bfk T_{n}.$$

For typed angularly decorated Schr\"oder trees $T^{(i)}\in T_{n_i}, 0\leq i\leq k,$ and $x_1,\ldots,x_k\in X, \alpha_0,\ldots,\alpha_k\in\Omega^{1},$ the grafting $\bigvee$ of $T^{(i)}$ over $(x_1,\ldots,x_k)$ and $(\alpha_0,\ldots,\alpha_k)$ is
\begin{equation}
T=\bigv x_1,\dots,x_k;k+1;\alpha_0,\dots,\alpha_k;
(T^{(0)},\dots,T^{(k)})
\mlabel{eq:texpr}
\end{equation}
obtained by joining the $k+1$ roots of $T^{(i)}$ to a new vertex angularly decorated by $(x_1,\ldots,x_k)$ and
decorating the new edges from left to right by $\alpha_0,\ldots,\alpha_k$ respectively.

Conversely, any typed angularly decorated Schr\"oder tree $T\in \DTF$ can be uniquely expressed as such a grafting in Eq.~(\mref{eq:texpr}) of lower depth typed angularly decorated planar rooted trees.

\begin{defn}\mlabel{defn:trirec}
Let $X$ be a set and let $\Omega$ be a semigroup. Slightly extending \cite[Definition 3.7]{ZGM}, we
define binary operations %$(\prec_{\omega}, \succ_\omega)_{\omega \in \Omega}$ and $\cdot$ on $\DTF$
$$ \prec_\omega, \succ_\omega,\cdot:\Big(\DTF\ot\DTF\Big)\oplus\Big(\bfk |\ot\DTF\Big)\oplus \Big(\DTF\ot\bfk |\Big)\ra \DTF\,\text{ for }\, \omega\in\Omega$$
 recursively on $\dep(T)+\dep(U)$ by
\begin{enumerate}
\item $|\succ_\omega T:=T\prec_\omega |:=T,\, |\prec_\omega T:=T\succ_\omega |:=0 \text{ and }\, |\cdot T:=T\cdot |:=0$ for $\omega\in \Omega$ and $T\in T_{n}$ with $n\geq 1.$\mlabel{it:trida}

\item Let
\begin{align*}
T=& \ \bigv x_1,\ldots,x_m;m+1;\alpha_0,\ldots,\alpha_m;(T^{(0)},\ldots, T^{(m)})\in T_{m},\\
U=& \ \bigv y_1,\ldots,y_n;n+1;\beta_0,\ldots,\beta_n;(U^{(0)},\ldots, U^{(n)})\in T_{n},
\end{align*}
define
\begin{align}
 T\prec_\omega U:=& \ \bigv x_1,\ldots,x_{m-1},x_m;m+1;\alpha_0,\ldots,\alpha_{m-1},\alpha_m\omega; (T^{(0)},\ldots,T^{(m-1)},T^{(m)}\succ_{\alpha_m} U+T^{(m)}\prec_\omega U+T^{(m)}\cdot U),\mlabel{eq:tdpre}\\
T\succ_\omega U:=& \ \bigv y_1,y_2,\ldots,y_n;n+1;\omega\beta_0,\beta_1,\ldots,\beta_n;  (T\succ_\omega U^{(0)}+T\prec_{\beta_0} U^{(0)}+T\cdot U^{(0)},U^{(1)},\ldots,U^{(n)}),\mlabel{eq:tdsuc}\\
 T\cdot U:=& \ \bigv x_1,\ldots,x_{m-1},x_m,y_1,\ldots,y_n;m+n+1;\alpha_0,\ldots,\alpha_{m-1},\alpha_m\beta_0,\beta_1,
 \ldots,\beta_n;
 (T^{(0)},\ldots,T^{(m-1)},T^{(m)}\succ_{\alpha_m} U^{(0)}+T^{(m)}\prec_{\beta_0} U^{(0)}+T^{(m)}\cdot U^{(0)},\mlabel{eq:tdcdot}\\
 & \hspace{5cm} U^{(1)},\ldots,U^{(n)}).\nonumber
\end{align}
\end{enumerate}
\end{defn}
\noindent Note that $|\prec_\omega |, |\succ_\omega |$ and $|\cdot |$ are not defined. We employ the convention that
\begin{equation}
|\prec_1 |+|\succ_1 |+|\cdot | :=|,
\mlabel{eq:1star1}
\end{equation}
and
\begin{equation}
|\succ_1 T:=T\prec_1 |:=T\,\text{ and }\, |\prec_1 T:=T\succ_1 |:=0.
\mlabel{eq:con}
\end{equation}

Let $j: X \ra \DTF$ be the map defined by $j(x)=\stree x$ for $x\in X$.
\begin{lemma}\mlabel{lem:free2}\cite{ZGM}
Let $X$ be a set and let $\Omega$ be a semigroup. Then $(\DTF,(\prec_{\omega}, \succ_\omega)_{\omega \in \Omega},\cdot)$, together with the map $j$, is the free tridendriform family algebra on $X$.
\end{lemma}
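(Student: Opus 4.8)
The plan is to establish the two defining properties of a free object: first, that $(\DTF,(\prec_{\omega},\succ_\omega)_{\omega\in\Omega},\cdot)$ genuinely satisfies the seven identities of Definition~\mref{defn:tridend}, and second, that it enjoys the universal property, namely that every set map $f\colon X\to A$ into a tridendriform family algebra $A$ extends uniquely to a morphism $\bar f\colon\DTF\to A$ with $\bar f\circ j=f$. Both parts proceed by induction on depth, exactly paralleling the dendriform case recorded in Theorem~\mref{thm:free1}; the recursive formulas \mref{eq:tdpre}, \mref{eq:tdsuc} and \mref{eq:tdcdot}, together with the conventions \mref{eq:1star1} and \mref{eq:con}, are the engine of every induction.

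For the first part I would verify each of the seven axioms of Definition~\mref{defn:tridend} by induction on $\dep(T)+\dep(U)+\dep(V)$, where $T,U,V$ are the three Schr\"oder trees being multiplied. The base cases reduce to corollas $\stree{x}$, for which the identities follow directly from item~\mref{it:trida} of Definition~\mref{defn:trirec} and the conventions. In the inductive step I expand each tree through its unique grafting decomposition \mref{eq:texpr}, rewrite both sides using \mref{eq:tdpre}--\mref{eq:tdcdot}, and match the resulting graftings edge-by-edge; the bookkeeping of the semigroup products (e.g.\ the edge decoration $\alpha_m\beta_0$ produced by $\cdot$) is what makes the two sides of each axiom agree. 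The seven relations split naturally into the three ``pure'' relations involving only $\prec$ and $\succ$, and the four relations coupling these with $\cdot$; in each, the relevant edge of the output tree receives the same decoration on both sides precisely because $\Omega$ is a semigroup.

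For the universal property I would define $\bar f$ by induction on $\dep(T)$. A corolla is sent by $\bar f(\stree{x}):=f(x)$. For a tree of depth $\ge 2$ written as in \mref{eq:texpr} with root subtrees $(T^{(0)},\dots,T^{(k)})$, I express this grafting as an iterated application of the operations $\prec_\omega,\succ_\omega,\cdot$ of $A$ to $f(x_1),\dots,f(x_k)$ and to $\bar f(T^{(0)}),\dots,\bar f(T^{(k)})$, building the tree up one angle at a time. Concretely, $\stree{x}\prec_\omega U$ grafts $U$ as the right branch, $\stree{x}\succ_\omega U$ feeds $x$ into the leftmost branch, and $\stree{x}\cdot U$ adjoins a new leftmost angle, so that iterated $\cdot$ products of corollas build corollas of arbitrary arity and every Schr\"oder tree is obtained from corollas by finitely many such operations; defining $\bar f$ by the corresponding word in the operations of $A$ gives a candidate extension. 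That $\bar f$ is a morphism then follows by another induction on total depth, checking compatibility with $\prec_\omega,\succ_\omega,\cdot$ against \mref{eq:tdpre}--\mref{eq:tdcdot}, and uniqueness is immediate since $X$ generates $\DTF$ under these three families of operations.

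The hard part will be the universal property, and specifically the passage from the grafting $\bigvee$ of Eq.~\mref{eq:texpr} --- an operation on vertices of arbitrary arity $k+1$ --- to a \emph{well-defined} word in the binary operations of $A$. One must pin down a canonical such word and then prove that $\bar f$ does not depend on this choice; this independence is exactly the content of the seven tridendriform identities in $A$, so the well-definedness of $\bar f$ and the verification of the axioms in the first part are two sides of the same computation. I expect this matching --- that the recursive definition of the operations on $\DTF$ in \mref{eq:tdpre}--\mref{eq:tdcdot} is the image under $\bar f$ of the tridendriform relations holding in every target $A$ --- to be the delicate, calculation-heavy core of the argument, with the role of $\Omega$ being a semigroup entering only through the associativity of the edge-decoration products.
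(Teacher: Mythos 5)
This lemma is not proved in the paper at all: it is imported, statement only, from \cite{ZGM}, so there is no in-paper argument to measure your attempt against, and the relevant comparison is with the construction given in that reference. Your two-step plan --- verifying the seven axioms of Definition~\mref{defn:tridend} by induction on total depth via the recursions (\mref{eq:tdpre})--(\mref{eq:tdcdot}) and the conventions (\mref{eq:1star1})--(\mref{eq:con}), then establishing the universal property by extending a map on $X$ over the generators --- is exactly the strategy used there, and your three structural observations ($\prec_\omega$ grafts onto the rightmost branch, $\succ_\omega$ pushes into the leftmost branch, $\cdot$ concatenates angles at the root) are correct and are precisely what make $X$ generate $\DTF$. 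One small inaccuracy: the depth-one base case consists of \emph{all} corollas, i.e.\ one-vertex trees with an arbitrary number of angles, not only the one-angle trees $j(x)$; the base computations are then the ones appearing in the initial step of the proof of Theorem~\mref{thm:main}.

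Two caveats, the second being the substantive one. First, as written your proposal defers its entire content: both the seven-identity check and the construction of $\bar f$ are announced as ``calculation-heavy cores'' rather than carried out, so nothing is yet established. Second, the well-definedness problem you single out as the hard part can be dissolved rather than solved. Instead of quantifying over all words in the binary operations that produce a given grafting of arity $k+1$ and proving independence (which, as you note, is essentially equivalent to re-deriving the axioms), fix once and for all the canonical decomposition
$T=\bigl((T^{(0)}\succ_{\alpha_0}j(x_1))\cdot(T^{(1)}\succ_{\alpha_1}j(x_2))\cdots(T^{(k-1)}\succ_{\alpha_{k-1}}j(x_k))\bigr)\prec_{\alpha_k}T^{(k)}$,
which one checks directly from (\mref{eq:tdpre})--(\mref{eq:tdcdot}); here the conventions (\mref{eq:1star1}) and (\mref{eq:con}) mean that when a branch $T^{(i)}$ equals $\vert$ the corresponding factor is simply omitted --- and note that this translation is forced, since the target algebra $A$ contains no analogue of $\vert$, a detail your sketch glosses over. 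Then define $\bar f(T)$ by structural recursion along this one decomposition and prove by induction on $\dep(T)+\dep(U)$ that $\bar f$ intertwines $\prec_\omega$, $\succ_\omega$, $\cdot$ with the recursions; independence of any other bracketing is then automatic because $\bar f$ is a morphism and the identities hold in $\DTF$ by the first part. Finally, uniqueness follows from generation as you say, but generation \emph{is} the displayed decomposition together with an induction on depth, so it must be stated and proved rather than assumed.
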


Let us recall the following fact.

\begin{lemma}\mlabel{lem:tendt}\cite{ZG}
Let $X$ be a set and let $\Omega$ be a semigroup. The Rota-Baxter family algebra $(\bfk\calt, \diamond,\,(B^{+}_{\omega})_{\omega \in \Omega})$ of weight $1$ induces a tridendriform family algebra
$(\bfk\calt, \, (\prec'_{\omega}, \succ'_\omega)_{\omega \in \Omega}, \cdot'\,)$, where
\begin{align}
\begin{split}
T\prec'_{{\omega}}U : = & \ T \diamond B^{+}_{\omega}(U),\,\, T\succ'_{{\omega}}U := B^{+}_{\omega}(T) \diamond U\,\text{ and }\\
 T\cdot'  U := & \  T\diamond U,\,\text{ for }\, T, U \in \calt.
 \end{split}
 \mlabel{eq:trid}
\end{align}
\end{lemma}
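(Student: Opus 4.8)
The plan is to check, one by one, the seven defining identities of a tridendriform family algebra listed in Definition~\mref{defn:tridend} for the operations $(\prec'_\omega,\succ'_\omega,\cdot')$ introduced in Eq.~(\mref{eq:trid}). The only two ingredients I would use are the associativity of $\diamond$ and the fact that $(B^{+}_\omega)_{\omega\in\Omega}$ is a Rota-Baxter family of weight $1$ on $(\bfk\calt,\diamond)$; both are supplied by Theorem~\mref{thm:free}. Concretely, I would repeatedly invoke the weight-$1$ relation
\[
B^{+}_{\alpha}(T)\diamond B^{+}_{\beta}(U)=B^{+}_{\alpha\beta}\bigl(B^{+}_{\alpha}(T)\diamond U+T\diamond B^{+}_{\beta}(U)+T\diamond U\bigr),\qquad T,U\in\bfk\calt,\ \alpha,\beta\in\Omega.
\]

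First I would dispose of the five identities that do not require the Rota-Baxter relation at all, since after substituting Eq.~(\mref{eq:trid}) they collapse to a single instance of associativity of $\diamond$. For example, the last axiom becomes $(T\cdot' U)\cdot' W=(T\diamond U)\diamond W=T\diamond(U\diamond W)=T\cdot'(U\cdot' W)$, and the second axiom becomes $(T\succ'_\alpha U)\prec'_\beta W=(B^{+}_\alpha(T)\diamond U)\diamond B^{+}_\beta(W)=B^{+}_\alpha(T)\diamond\bigl(U\diamond B^{+}_\beta(W)\bigr)=T\succ'_\alpha(U\prec'_\beta W)$. The fourth, fifth and sixth axioms are verified in exactly the same way, each reducing to re-bracketing a triple $\diamond$-product.

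The substantive step is the first and third axioms, which are precisely where the weight-$1$ relation is used. For the first, I would expand the right-hand side using Eq.~(\mref{eq:trid}) as $T\diamond B^{+}_{\alpha\beta}\bigl(U\diamond B^{+}_\beta(W)+B^{+}_\alpha(U)\diamond W+U\diamond W\bigr)$ and recognise, via the displayed relation, that the bracketed argument equals $B^{+}_\alpha(U)\diamond B^{+}_\beta(W)$; associativity then identifies this with the left-hand side $(T\diamond B^{+}_\alpha(U))\diamond B^{+}_\beta(W)$. The third axiom is symmetric: its left-hand side $B^{+}_{\alpha\beta}\bigl(T\diamond B^{+}_\beta(U)+B^{+}_\alpha(T)\diamond U+T\diamond U\bigr)\diamond W$ collapses by the same relation to $\bigl(B^{+}_\alpha(T)\diamond B^{+}_\beta(U)\bigr)\diamond W$, which by associativity equals $B^{+}_\alpha(T)\diamond\bigl(B^{+}_\beta(U)\diamond W\bigr)=T\succ'_\alpha(U\succ'_\beta W)$.

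I do not expect a genuine obstacle here: the computation is routine once one notices that the three summands on the bracketed side of the first and third tridendriform axioms are exactly the three terms generated by the Rota-Baxter family relation of weight $1$, so the choice $\lambda=1$ is precisely what closes the identities. The only point demanding attention is the noncommutativity of $\Omega$, which forces one to track the order of the index products $\alpha\beta$ faithfully; this order is already built into both the relation above and Eq.~(\mref{eq:trid}), so no difficulty arises. (The analogous weight-$0$ statement, in which the $\cdot'$ term is absent, is the content of Lemma~\mref{lem:dendt}.)
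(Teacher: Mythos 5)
Your verification is correct: after substituting Eq.~(\mref{eq:trid}), the second, fourth, fifth, sixth and seventh axioms of Definition~\mref{defn:tridend} indeed reduce to associativity of $\diamond$, while the first and third collapse via the weight-$1$ identity $B^{+}_{\alpha}(T)\diamond B^{+}_{\beta}(U)=B^{+}_{\alpha\beta}\bigl(B^{+}_{\alpha}(T)\diamond U+T\diamond B^{+}_{\beta}(U)+T\diamond U\bigr)$, with the order of the index products $\alpha\beta$ tracked correctly. The paper itself gives no proof of Lemma~\mref{lem:tendt} (it is quoted from \mcite{ZG}), and the argument there is precisely this axiom-by-axiom check, so your proposal matches the intended proof.
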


Now we are ready for our main result.
\begin{theorem}\label{thm:main}
Let $X$ be a set and let $\Omega$ be a semigroup. The free tridendriform family algebra $\bigl(\DTF,(\prec_{\omega}, \succ_\omega)_{\omega \in \Omega},\cdot\bigr)$ on $X$ is a tridendriform family subalgebra of the free Rota-Baxter family algebra $\bigl(\bfk\calt,\diamond,(B^{+}_{\omega})_{\omega \in \Omega}\bigr)$ of weight $1$.
\end{theorem}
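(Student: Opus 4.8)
The plan is to follow the proof of Theorem~\mref{thm:sub1} almost verbatim, replacing the dendriform data by their tridendriform analogues and exploiting that the weight is now $1$ instead of $0$. By Lemma~\mref{lem:tendt}, the weight-one Rota-Baxter family algebra $\bigl(\bfk\calt,\diamond,(B^{+}_\omega)_{\omega\in\Omega}\bigr)$ already carries the induced tridendriform family structure $\bigl(\bfk\calt,(\prec'_\omega,\succ'_\omega)_{\omega\in\Omega},\cdot'\bigr)$ of Eq.~(\mref{eq:trid}). Since $\bigl(\DTF,(\prec_\omega,\succ_\omega)_{\omega\in\Omega},\cdot\bigr)$ is the free tridendriform family algebra on $X$ by Lemma~\mref{lem:free2}, it suffices to show that the subspace $\DTF\subseteq\bfk\calt$ is closed under the induced operations $\prec'_\omega$, $\succ'_\omega$, $\cdot'$, and that on $\DTF$ these coincide with the intrinsic ones, i.e.
\begin{equation*}
T\prec_\omega U=T\prec'_\omega U,\qquad T\succ_\omega U=T\succ'_\omega U,\qquad T\cdot U=T\cdot' U.
\end{equation*}
Granting both, $\DTF$ with its free structure is a tridendriform family subalgebra of $\bigl(\bfk\calt,(\prec'_\omega,\succ'_\omega)_{\omega\in\Omega},\cdot'\bigr)$, which is the assertion.

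Both claims I would prove together by induction on $\dep(T)+\dep(U)\geq 2$, writing $T=\bigv x_1,\ldots,x_m;m+1;\alpha_0,\ldots,\alpha_m;(T^{(0)},\ldots,T^{(m)})$ and $U=\bigv y_1,\ldots,y_n;n+1;\beta_0,\ldots,\beta_n;(U^{(0)},\ldots,U^{(n)})$ in their unique grafting forms. The base case $T=\stree x$, $U=\stree y$ is a direct computation via Eqs.~(\mref{eq:def1}), (\mref{eq:def4}), (\mref{eq:def5}), as in the dendriform base case, supplemented by $\stree x\cdot'\stree y=\stree x\diamond\stree y$, which by Eq.~(\mref{eq:def1}) is the two-leaf corolla decorated by $x$ and $y$, hence lies in $\DTF$ and equals $\stree x\cdot\stree y$. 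For the operations $\prec'_\omega$ and $\succ'_\omega$ the inductive step runs exactly as in Theorem~\mref{thm:sub1}: one unfolds $T\diamond B^{+}_\omega(U)$ and $B^{+}_\omega(T)\diamond U$ through Case~3 (Eq.~(\mref{eq:def5})) and Case~2 (Eq.~(\mref{eq:def4})) of the definition of $\diamond$, rebrackets using associativity and the Rota-Baxter family relation supplied by Theorem~\mref{thm:free} at weight $1$, rewrites the resulting $B^{+}$-capped subtrees back in terms of $\prec'$, $\succ'$, $\cdot'$ via Eq.~(\mref{eq:trid}), and matches the outcome with the recursions (\mref{eq:tdpre}) and (\mref{eq:tdsuc}) after applying the induction hypothesis. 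Each output is visibly a combination of Schröder trees, yielding closure.

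The substantive new ingredient---and the reason weight $1$ is forced---is the product $\cdot'=\diamond$. Here all four cases of the definition of $\diamond$ may occur, according to whether the last branch $T^{(m)}$ of $T$ and the first branch $U^{(0)}$ of $U$ are trivial; the cases where at least one is trivial (Cases~1--3, Eqs.~(\mref{eq:def3})--(\mref{eq:def5})) involve no genuine merging of branches and are handled directly using the conventions (\mref{eq:1star1}) and (\mref{eq:con}). The decisive case is $T^{(m)}\neq|\neq U^{(0)}$, where Case~4 (Eq.~(\mref{eq:def6})) with $\lambda=1$ produces the merged middle branch
\begin{equation*}
B^{+}_{\alpha_m}(T^{(m)})\diamond U^{(0)}+T^{(m)}\diamond B^{+}_{\beta_0}(U^{(0)})+T^{(m)}\diamond U^{(0)},
\end{equation*}
whose three summands are exactly $T^{(m)}\succ'_{\alpha_m}U^{(0)}$, $T^{(m)}\prec'_{\beta_0}U^{(0)}$ and $T^{(m)}\cdot'U^{(0)}$ by Eq.~(\mref{eq:trid}). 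This matches termwise the middle branch prescribed by Eq.~(\mref{eq:tdcdot}), the extra summand $T^{(m)}\cdot U^{(0)}$ being precisely the weight term $\lambda\,T^{(m)}\diamond U^{(0)}$ that vanished in the weight-zero dendriform setting. I expect the main obstacle to be exactly this bookkeeping: one must verify that the grafting arities and the edge decorations generated by the iterated Cases~2--4 of $\diamond$ agree on the nose with the arbitrary-arity grafting $\bigvee$ of Definition~\mref{defn:trirec}, not merely up to the binary decomposition that sufficed in Theorem~\mref{thm:sub1}. Once the three merged terms and the labels are tracked correctly and converted through the induction hypothesis, comparison with (\mref{eq:tdcdot}) closes the induction and gives $T\cdot U=T\cdot'U\in\DTF$.
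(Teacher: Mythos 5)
Your proposal is correct and follows essentially the same route as the paper's proof: both reduce the statement, via Lemma~\mref{lem:tendt} and Lemma~\mref{lem:free2}, to showing closure of $\DTF$ under $\prec'_\omega,\succ'_\omega,\cdot'$ together with the identities $T\prec_\omega U=T\prec'_\omega U$, $T\succ_\omega U=T\succ'_\omega U$, $T\cdot U=T\cdot' U$, proved by induction on $\dep(T)+\dep(U)$ with the same case split on whether $T^{(m)}$ and $U^{(0)}$ are trivial, using Eqs.~(\mref{eq:def3})--(\mref{eq:def6}), the weight-one Rota--Baxter relation from Theorem~\mref{thm:free}, and the conventions (\mref{eq:1star1}) and (\mref{eq:con}). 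In particular, your identification of the Case~4 merged middle branch $B^{+}_{\alpha_m}(T^{(m)})\diamond U^{(0)}+T^{(m)}\diamond B^{+}_{\beta_0}(U^{(0)})+T^{(m)}\diamond U^{(0)}$ with the middle component of Eq.~(\mref{eq:tdcdot})---the weight term supplying $T^{(m)}\cdot U^{(0)}$---is exactly the decisive step in the paper's Case~8.
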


\begin{proof}
By Lemma~\mref{lem:tendt}, a Rota-Baxter family algebra $\bigl(\bfk\calt,\diamond,\,$ $(B^{+}_{\omega})_{\omega \in \Omega}\bigr)$ of weight $1$ induces a tridendriform family algebra
$\bigl(\bfk\calt, \, (\prec'_{\omega}, \succ'_\omega)_{\omega \in \Omega},\cdot'\bigr).$
Now we prove that $\bigl(\DTF,(\prec'_\omega,\succ'_\omega)_{\omega\in\Omega},\cdot'\bigr)$ is a tridendriform family subalgebra of $\bigl(\bfk\calt, \, (\prec'_{\omega}, \succ'_\omega)_{\omega \in \Omega},\cdot'\bigr).$

For
\begin{align*}
T=& \ \bigv x_1,\ldots,x_m;m+1;\alpha_0,\ldots,\alpha_m;(T^{(0)},\ldots, T^{(m)})\in T_{m},\\
U=& \ \bigv y_1,\ldots,y_n;n+1;\beta_0,\ldots,\beta_n;(U^{(0)},\ldots, U^{(n)})\in T_{n}\,\text{ with }\,m,n\geq 1,
\end{align*}
we use induction on $\dep(T)+\dep(U)\geq 2.$
For the initial step $\dep(T)+\dep(U)=2$, we have
$T=\treeoo{\cdb o\cdx[1.5]{o}{a1}{160}
\cdx[1.5]{o}{a2}{120}
\cdx[1.5]{o}{a3}{60}
\cdx[1.5]{o}{a4}{20}
%\node[below] at ($(o)!0.7!(a1)$) {$\omega$};
\node at (140:1.2*\xch) {$x_1$};
\node at (90:1.2*\xch) {$\cdots$};
\node at (40:1.2*\xch) {$x_m$};
}$ and $U=\treeoo{\cdb o\cdx[1.5]{o}{a1}{160}
\cdx[1.5]{o}{a2}{120}
\cdx[1.5]{o}{a3}{60}
\cdx[1.5]{o}{a4}{20}
%\node[below] at ($(o)!0.7!(a1)$) {$\omega$};
\node at (140:1.2*\xch) {$y_1$};
\node at (90:1.2*\xch) {$\cdots$};
\node at (40:1.2*\xch) {$y_n$};
}$. Then
\begin{align*}
T\prec'_\omega U&=T\diamond B^+_\omega(U)=\treeoo{\cdb o\cdx[1.5]{o}{a1}{160}
\cdx[1.5]{o}{a2}{120}
\cdx[1.5]{o}{a3}{60}
\cdx[1.5]{o}{a4}{20}
%\node[below] at ($(o)!0.7!(a4)$) {$\omega$};
\node at (140:1.2*\xch) {$x_1$};
\node at (90:1.2*\xch) {$\cdots$};
\node at (40:1.2*\xch) {$x_m$};
}\diamond B^{+}_\omega(U)
=\treeoo{\cdb o\cdx[1.5]{o}{a1}{160}
\cdx[1.5]{o}{a2}{120}
\cdx[1.5]{o}{a3}{60}
\ocdx[1.5]{o}{a4}{20}{U}{20}
\node[below] at ($(o)!0.7!(a4)$) {$\omega$};
\node at (140:1.2*\xch) {$x_1$};
\node at (90:1.2*\xch) {$\cdots$};
\node at (40:1.2*\xch) {$x_m$};
}\in\DTF,\\%\overset{(\mref{eq:def5})}
T\succ'_\omega U&=B^+_\omega(T)\diamond U
=B^{+}_\omega(T)\diamond \treeoo{\cdb o\cdx[1.5]{o}{a1}{160}
\cdx[1.5]{o}{a2}{120}
\cdx[1.5]{o}{a3}{60}
\cdx[1.5]{o}{a4}{20}
%\node[below] at ($(o)!0.7!(a1)$) {$\omega$};
\node at (140:1.2*\xch) {$y_1$};
\node at (90:1.2*\xch) {$\cdots$};
\node at (40:1.2*\xch) {$y_n$};
}
=\treeoo{\cdb o\ocdx[1.5]{o}{a1}{160}{T}{160}
\cdx[1.5]{o}{a2}{120}
\cdx[1.5]{o}{a3}{60}
\cdx[1.5]{o}{a4}{20}
\node[below] at ($(o)!0.7!(a1)$) {$\omega$};
\node at (140:1.2*\xch) {$y_1$};
\node at (90:1.2*\xch) {$\cdots$};
\node at (40:1.2*\xch) {$y_n$};
}\in\DTF\,\text{ and }\,\\%\overset{~(\mref{eq:def4})}{=}
T\cdot'U&=T\diamond U=
\treeoo{\cdb o\cdx[1.5]{o}{a1}{160}
\cdx[1.5]{o}{a2}{120}
\cdx[1.5]{o}{a3}{60}
\cdx[1.5]{o}{a4}{20}
%\node[below] at ($(o)!0.7!(a4)$) {$\omega$};
\node at (140:1.2*\xch) {$x_1$};
\node at (90:1.2*\xch) {$\cdots$};
\node at (40:1.2*\xch) {$x_m$};
}\diamond \treeoo{\cdb o\cdx[1.5]{o}{a1}{160}
\cdx[1.5]{o}{a2}{120}
\cdx[1.5]{o}{a3}{60}
\cdx[1.5]{o}{a4}{20}
%\node[below] at ($(o)!0.7!(a4)$) {$\omega$};
\node at (140:1.2*\xch) {$y_1$};
\node at (90:1.2*\xch) {$\cdots$};
\node at (40:1.2*\xch) {$y_n$};
}
=\treeoo{\cdb o
\cdx[2.2]{o}{a1}{170}
\cdx[2.2]{o}{a2}{145}
\cdx[2.2]{o}{a3}{115}
\cdx[2.2]{o}{a4}{90}
\cdx[2.2]{o}{a5}{65}
\cdx[2.2]{o}{a6}{35}
\cdx[2.2]{o}{a7}{10}
\node at ($(a1)!0.5!(a2)$) {$x_1$};
\node[rotate=40] at ($(a2)!0.5!(a3)$) {$\cdots$};
\node at ($(a3)!0.5!(a4)$) {$x_m$};
\node at ($(a4)!0.5!(a5)$) {$y_1$};
\node[rotate=-40] at ($(a5)!0.5!(a6)$) {$\cdots$};
\node at ($(a6)!0.5!(a7)$) {$y_n$};
}\in\DTF.%\overset{(\mref{eq:def1})}{=}
\end{align*}
For the induction step $\dep(T)+\dep(U)\geq 3$, we first prove $T\prec'_\omega U\in\DTF,$ there are two cases to consider.

\noindent {\bf Case 1:} $T^{(m)}=|$. Then
\begin{align*}
T\prec'_\omega U=T\diamond B^+_\omega(U)=\treeoo{\cdb o\ocdx[2]{o}{a1}{160}{T^{(0)}}{left}
\ocdx[2]{o}{a2}{120}{T^{(1)}}{above}
\ocdx[2]{o}{a3}{60}{T^{(m-1)}}{above}
\cdx[2]{o}{a4}{20}
\node at (140:\xch) {$x_1$};
\node at (90:\xch) {$\cdots$};
\node at (40:\xch) {$x_m$};
\node[below] at ($(o)!0.7!(a1)$) {$\alpha_0$};
%\node[below] at ($(o)!0.7!(a4)$) {$\alpha_{m}\omega$};
\node[right] at ($(o)!0.75!(a2)$) {$\alpha_1$};
\node[right] at ($(o)!0.75!(a3)$) {$\alpha_{m-1}\!$};
}\diamond B^+_\omega(U)
=\treeoo{\cdb o\ocdx[2]{o}{a1}{160}{T^{(0)}}{left}
\ocdx[2]{o}{a2}{120}{T^{(1)}}{above}
\ocdx[2]{o}{a3}{60}{T^{(m-1)}}{above}
\ocdx[2]{o}{a4}{20}{U}{right}
\node at (140:\xch) {$x_1$};
\node at (90:\xch) {$\cdots$};
\node at (40:\xch) {$x_m$};
\node[below] at ($(o)!0.7!(a1)$) {$\alpha_0$};
\node[below] at ($(o)!0.7!(a4)$) {$\omega$};
\node[right] at ($(o)!0.75!(a2)$) {$\alpha_1$};
\node[right] at ($(o)!0.75!(a3)$) {$\alpha_{m-1}\!$};
}\in\DTF.
\end{align*}
\noindent {\bf Case 2:} $T^{(m)}\neq |$. Then
\begin{align*}
T\prec'_\omega U&=T\diamond B^+_\omega(U)
=\treeoo{\cdb o\ocdx[2]{o}{a1}{160}{T^{(0)}}{left}
\ocdx[2]{o}{a2}{120}{T^{(1)}}{above}
\ocdx[2]{o}{a3}{60}{T^{(m-1)}}{above}
\ocdx[2]{o}{a4}{20}{T^{(m)}}{right}
\node at (140:\xch) {$x_1$};
\node at (90:\xch) {$\cdots$};
\node at (40:\xch) {$x_m$};
\node[below] at ($(o)!0.7!(a1)$) {$\alpha_0$};
\node[below] at ($(o)!0.7!(a4)$) {$\alpha_{m}$};
\node[right] at ($(o)!0.75!(a2)$) {$\alpha_1$};
\node[right] at ($(o)!0.75!(a3)$) {$\alpha_{m-1}\!$};
}\diamond B^{+}_\omega(U)\\
&=\Big(\treeoo{\cdb o\ocdx[2]{o}{a1}{160}{T^{(0)}}{left}
\ocdx[2]{o}{a2}{120}{T^{(1)}}{above}
\ocdx[2]{o}{a3}{60}{T^{(m-1)}}{above}
\cdx[2]{o}{a4}{20}
\node at (140:\xch) {$x_1$};
\node at (90:\xch) {$\cdots$};
\node at (40:\xch) {$x_m$};
\node[below] at ($(o)!0.7!(a1)$) {$\alpha_0$};
%\node[below] at ($(o)!0.7!(a4)$) {$\alpha_{m}\omega$};
\node[right] at ($(o)!0.75!(a2)$) {$\alpha_1$};
\node[right] at ($(o)!0.75!(a3)$) {$\alpha_{m-1}\!$};
}\diamond B^{+}_{\alpha_m}(T^{(m)})\Big)\diamond B^{+}_{\omega}(U)
\quad(\text{by Eq.~(\mref{eq:def5})})\\
&=\treeoo{\cdb o\ocdx[2]{o}{a1}{160}{T^{(0)}}{left}
\ocdx[2]{o}{a2}{120}{T^{(1)}}{above}
\ocdx[2]{o}{a3}{60}{T^{(m-1)}}{above}
\cdx[2]{o}{a4}{20}
\node at (140:\xch) {$x_1$};
\node at (90:\xch) {$\cdots$};
\node at (40:\xch) {$x_m$};
\node[below] at ($(o)!0.7!(a1)$) {$\alpha_0$};
%\node[below] at ($(o)!0.7!(a4)$) {$\alpha_{m}\omega$};
\node[right] at ($(o)!0.75!(a2)$) {$\alpha_1$};
\node[right] at ($(o)!0.75!(a3)$) {$\alpha_{m-1}\!$};
}\diamond \Big(B^{+}_{\alpha_m}(T^{(m)})\diamond B^{+}_{\omega}(U)\Big)\\
&=\treeoo{\cdb o\ocdx[2]{o}{a1}{160}{T^{(0)}}{left}
\ocdx[2]{o}{a2}{120}{T^{(1)}}{above}
\ocdx[2]{o}{a3}{60}{T^{(m-1)}}{above}
\cdx[2]{o}{a4}{20}
\node at (140:\xch) {$x_1$};
\node at (90:\xch) {$\cdots$};
\node at (40:\xch) {$x_m$};
\node[below] at ($(o)!0.7!(a1)$) {$\alpha_0$};
%\node[below] at ($(o)!0.7!(a4)$) {$\alpha_{m}\omega$};
\node[right] at ($(o)!0.75!(a2)$) {$\alpha_1$};
\node[right] at ($(o)!0.75!(a3)$) {$\alpha_{m-1}\!$};
}\diamond B^{+}_{\alpha_m\omega}\Big(B^{+}_{\alpha_m}(T^{(m)})\diamond U+T^{(m)}\diamond B^{+}_\omega(U)+T^{(m)}\diamond U\Big)\\
&\hspace{3cm}(\text{by Theorem~(\mref{thm:free}) of weight $1$})\\
&=\treeoo{\cdb o\ocdx[2]{o}{a1}{160}{T^{(0)}}{left}
\ocdx[2]{o}{a2}{120}{T^{(1)}}{above}
\ocdx[2]{o}{a3}{60}{T^{(m-1)}}{above}
\ocdx[2]{o}{a4}{20}{(B^{+}_{\alpha_m}(T^{(m)})\diamond U)}{right}
\node at (140:\xch) {$x_1$};
\node at (90:\xch) {$\cdots$};
\node at (40:\xch) {$x_m$};
\node[below] at ($(o)!0.7!(a1)$) {$\alpha_0$};
\node[below] at ($(o)!0.7!(a4)$) {$\alpha_{m}\omega$};
\node[right] at ($(o)!0.75!(a2)$) {$\alpha_1$};
\node[right] at ($(o)!0.75!(a3)$) {$\alpha_{m-1}\!$};
}+\treeoo{\cdb o\ocdx[2]{o}{a1}{160}{T^{(0)}}{left}
\ocdx[2]{o}{a2}{120}{T^{(1)}}{above}
\ocdx[2]{o}{a3}{60}{T^{(m-1)}}{above}
\ocdx[2]{o}{a4}{20}{(T^{(m)}\diamond B^{+}_\omega(U))}{right}
\node at (140:\xch) {$x_1$};
\node at (90:\xch) {$\cdots$};
\node at (40:\xch) {$x_m$};
\node[below] at ($(o)!0.7!(a1)$) {$\alpha_0$};
\node[below] at ($(o)!0.7!(a4)$) {$\alpha_{m}\omega$};
\node[right] at ($(o)!0.75!(a2)$) {$\alpha_1$};
\node[right] at ($(o)!0.75!(a3)$) {$\alpha_{m-1}\!$};
}+\treeoo{\cdb o\ocdx[2]{o}{a1}{160}{T^{(0)}}{left}
\ocdx[2]{o}{a2}{120}{T^{(1)}}{above}
\ocdx[2]{o}{a3}{60}{T^{(m-1)}}{above}
\ocdx[2]{o}{a4}{20}{(T^{(m)}\diamond U)}{right}
\node at (140:\xch) {$x_1$};
\node at (90:\xch) {$\cdots$};
\node at (40:\xch) {$x_m$};
\node[below] at ($(o)!0.7!(a1)$) {$\alpha_0$};
\node[below] at ($(o)!0.7!(a4)$) {$\alpha_{m}\omega$};
\node[right] at ($(o)!0.75!(a2)$) {$\alpha_1$};
\node[right] at ($(o)!0.75!(a3)$) {$\alpha_{m-1}\!$};
}.\\
&\hspace{5cm}(\text{by Eq.~(\mref{eq:def5})})
\end{align*}
By the induction hypothesis, $
B^{+}_{\alpha_m}(T^{(m)})\diamond U, T^{(m)}\diamond B^{+}_\omega(U)$ and $T^{(m)}\diamond U\in\DTF$ and so $T\prec'_\omega U\in\DTF.$

\noindent We next prove $T\succ'_\omega U\in\DTF,$  there are also two cases to consider.

\noindent{\bf Case 3:} $U^{(0)}=|$. Then
\begin{align*}
T\succ'_\omega U&=B^+_\omega(T)\diamond U=B^{+}_\omega(T)\diamond \treeoo{\cdb o\cdx[2]{o}{a1}{160}
\ocdx[2]{o}{a2}{120}{U^{(1)}}{above}
\ocdx[2]{o}{a3}{60}{U^{(n-1)}}{above}
\ocdx[2]{o}{a4}{20}{U^{(n)}}{right}
\node at (140:\xch) {$y_1$};
\node at (90:\xch) {$\cdots$};
\node at (40:\xch) {$y_n$};
%\node[below] at ($(o)!0.7!(a1)$) {$\omega$};
\node[below] at ($(o)!0.7!(a4)$) {$\beta_{n}$};
\node[right] at ($(o)!0.75!(a2)$) {$\beta_1$};
\node[right] at ($(o)!0.75!(a3)$) {$\beta_{n-1}\!$};
}
=\treeoo{\cdb o\ocdx[2]{o}{a1}{160}{T}{left}
\ocdx[2]{o}{a2}{120}{U^{(1)}}{above}
\ocdx[2]{o}{a3}{60}{U^{(n-1)}}{above}
\ocdx[2]{o}{a4}{20}{U^{(n)}}{right}
\node at (140:\xch) {$y_1$};
\node at (90:\xch) {$\cdots$};
\node at (40:\xch) {$y_n$};
\node[below] at ($(o)!0.7!(a1)$) {$\omega$};
\node[below] at ($(o)!0.7!(a4)$) {$\beta_{n}$};
\node[right] at ($(o)!0.75!(a2)$) {$\beta_1$};
\node[right] at ($(o)!0.75!(a3)$) {$\beta_{n-1}\!$};
}\in\DTF.%\overset{~(\mref{eq:def4})}{=}
\end{align*}
\noindent {\bf Case 4:} $U^{(0)}\neq |$. Then
\begin{align*}
T\succ'_\omega U&=B^+_\omega(T)\diamond U
=B^{+}_\omega(T)\diamond \treeoo{\cdb o\ocdx[2]{o}{a1}{160}{U^{(0)}}{left}
\ocdx[2]{o}{a2}{120}{U^{(1)}}{above}
\ocdx[2]{o}{a3}{60}{U^{(n-1)}}{above}
\ocdx[2]{o}{a4}{20}{U^{(n)}}{right}
\node at (140:\xch) {$y_1$};
\node at (90:\xch) {$\cdots$};
\node at (40:\xch) {$y_n$};
\node[below] at ($(o)!0.7!(a1)$) {$\beta_0$};
\node[below] at ($(o)!0.7!(a4)$) {$\beta_{n}$};
\node[right] at ($(o)!0.75!(a2)$) {$\beta_1$};
\node[right] at ($(o)!0.75!(a3)$) {$\beta_{n-1}\!$};
}\\
&=B^{+}_\omega(T)\diamond \Big(B^{+}_{\beta_0}(U^{(0)})\diamond \treeoo{\cdb o\cdx[2]{o}{a1}{160}
\ocdx[2]{o}{a2}{120}{U^{(1)}}{above}
\ocdx[2]{o}{a3}{60}{U^{(n-1)}}{above}
\ocdx[2]{o}{a4}{20}{U^{(n)}}{right}
\node at (140:\xch) {$y_1$};
\node at (90:\xch) {$\cdots$};
\node at (40:\xch) {$y_n$};
%\node[below] at ($(o)!0.7!(a1)$) {$\omega\beta_0$};
\node[below] at ($(o)!0.7!(a4)$) {$\beta_{n}$};
\node[right] at ($(o)!0.75!(a2)$) {$\beta_1$};
\node[right] at ($(o)!0.75!(a3)$) {$\beta_{n-1}\!$};
}\Big)\quad(\text{by Eq.~(\mref{eq:def4})})\\
&=\Big(B^{+}_\omega(T)\diamond B^{+}_{\beta_0}(U^{(0)})\Big)\diamond
\treeoo{\cdb o\cdx[2]{o}{a1}{160}
\ocdx[2]{o}{a2}{120}{U^{(1)}}{above}
\ocdx[2]{o}{a3}{60}{U^{(n-1)}}{above}
\ocdx[2]{o}{a4}{20}{U^{(n)}}{right}
\node at (140:\xch) {$y_1$};
\node at (90:\xch) {$\cdots$};
\node at (40:\xch) {$y_n$};
%\node[below] at ($(o)!0.7!(a1)$) {$\omega\beta_0$};
\node[below] at ($(o)!0.7!(a4)$) {$\beta_{n}$};
\node[right] at ($(o)!0.75!(a2)$) {$\beta_1$};
\node[right] at ($(o)!0.75!(a3)$) {$\beta_{n-1}\!$};
}\\
&=B^{+}_{\omega\beta_0}\Big(B^{+}_\omega(T)\diamond U^{(0)}+T\diamond B^{+}_{\beta_0}(U^{(0)}
)+T\diamond U^{(0)}\Big)\diamond \treeoo{\cdb o\cdx[2]{o}{a1}{160}
\ocdx[2]{o}{a2}{120}{U^{(1)}}{above}
\ocdx[2]{o}{a3}{60}{U^{(n-1)}}{above}
\ocdx[2]{o}{a4}{20}{U^{(n)}}{right}
\node at (140:\xch) {$y_1$};
\node at (90:\xch) {$\cdots$};
\node at (40:\xch) {$y_n$};
%\node[below] at ($(o)!0.7!(a1)$) {$\omega\beta_0$};
\node[below] at ($(o)!0.7!(a4)$) {$\beta_{n}$};
\node[right] at ($(o)!0.75!(a2)$) {$\beta_1$};
\node[right] at ($(o)!0.75!(a3)$) {$\beta_{n-1}\!$};
}\\
&\hspace{3cm}(\text{by Theorem~(\mref{thm:free}) of weight $1$})\\
&=\treeoo{\cdb o\ocdx[2]{o}{a1}{160}{\Big(B^{+}_\omega(T)\diamond U^{(0)}+T\diamond B^{+}_{\beta_0}(U^{(0)})+T\diamond U^{(0)}\Big)}{left}
\ocdx[2]{o}{a2}{120}{U^{(1)}}{above}
\ocdx[2]{o}{a3}{60}{U^{(n-1)}}{above}
\ocdx[2]{o}{a4}{20}{U^{(n)}}{right}
\node at (140:\xch) {$y_1$};
\node at (90:\xch) {$\cdots$};
\node at (40:\xch) {$y_n$};
\node[below] at ($(o)!0.7!(a1)$) {$\omega\beta_0$};
\node[below] at ($(o)!0.7!(a4)$) {$\beta_{n}$};
\node[right] at ($(o)!0.75!(a2)$) {$\beta_1$};
\node[right] at ($(o)!0.75!(a3)$) {$\beta_{n-1}\!$};
}\quad(\text{by Eq.~(\mref{eq:def4})}).
\end{align*}
By the induction hypothesis, $B^{+}_\omega(T)\diamond U^{(0)}+T\diamond B^{+}_{\beta_0}(U^{(0)})+T\diamond U^{(0)}\in\DTF$ and so $T\succ'_\omega U\in\DTF.$

\noindent We finally prove $T\cdot' U\in\DTF$, there are four cases to consider.

\noindent{\bf Case 5:} $T^{(m)}=|=U^{(0)}$. Then
\begin{align*}
T\cdot' U=T\diamond U
=\treeoo{\cdb o\ocdx[2]{o}{a1}{160}{T^{(0)}}{left}
\ocdx[2]{o}{a2}{120}{T^{(1)}}{above}
\ocdx[2]{o}{a3}{60}{T^{(m-1)}}{above}
\cdx[2]{o}{a4}{20}%{T^{(m)}}{right}
\node at (140:\xch) {$x_1$};
\node at (90:\xch) {$\cdots$};
\node at (40:\xch) {$x_m$};
\node[below] at ($(o)!0.7!(a1)$) {$\alpha_0$};
%\node[below] at ($(o)!0.7!(a4)$) {$\alpha_{m}$};
\node[right] at ($(o)!0.75!(a2)$) {$\alpha_1$};
\node[right] at ($(o)!0.75!(a3)$) {$\alpha_{m-1}\!$};
}\diamond \treeoo{\cdb o\cdx[2]{o}{a1}{160}%{T^{(0)}}{left}
\ocdx[2]{o}{a2}{120}{U^{(1)}}{above}
\ocdx[2]{o}{a3}{60}{U^{(n-1)}}{above}
\ocdx[2]{o}{a4}{20}{U^{(n)}}{right}
\node at (140:\xch) {$y_1$};
\node at (90:\xch) {$\cdots$};
\node at (40:\xch) {$y_n$};
%\node[below] at ($(o)!0.7!(a1)$) {$\alpha_0$};
\node[below] at ($(o)!0.7!(a4)$) {$\beta_{n}$};
\node[right] at ($(o)!0.75!(a2)$) {$\beta_1$};
\node[right] at ($(o)!0.75!(a3)$) {$\beta_{n-1}\!$};
}
=\treeoo{\cdb o
\ocdx[2.5]{o}{a1}{170}{T^{(0)}}{170}
\ocdx[2.5]{o}{a2}{145}{T^{(1)}}{145}
\ocdx[2.5]{o}{a3}{115}{T^{(m-1)}}{115}
\cdx[2.5]{o}{a4}{90}{-90}
\ocdx[2.5]{o}{a5}{65}{U^{(1)}}{65}
\ocdx[2.5]{o}{a6}{35}{U^{(n-1)}}{35}
\ocdx[2.5]{o}{a7}{10}{U^{(n)}}{10}
\node at (157:1.3*\xch) {$x_1$};
\node[rotate=40] at (129:2.5*\xch) {$\cdots$};
\node at (102:1.5*\xch) {$x_m$};
\node at (75:1.5*\xch) {$y_1$};
\node[rotate=-40] at (49:2.5*\xch) {$\cdots$};
\node at (20:1.5*\xch) {$y_n$};
\node[below] at ($(o)!0.7!(a1)$) {$\alpha_0$};
\node[below] at ($(o)!0.8!(a7)$) {\tiny$\beta_{n}$};
\node[below] at ($(o)!0.9!(a2)$) {\tiny$\alpha_1$};
\node[left=-1pt] at ($(o)!0.65!(a3)$) {\tiny$\alpha_{m-1}$};
\node[right=1pt] at ($(o)!0.65!(a5)$) {\tiny$\beta_1$};
\node[right=1pt] at ($(o)!0.65!(a6)$) {\tiny$\beta_{n-1}$};
}\in\DTF.%\overset{(\mref{eq:def3})}{=}
\end{align*}
\noindent{\bf Case 6:} $T^{(m)}\neq|=U^{(0)}$. Then
\begin{align*}
T\cdot' U=T\diamond U=\treeoo{\cdb o\ocdx[2]{o}{a1}{160}{T^{(0)}}{left}
\ocdx[2]{o}{a2}{120}{T^{(1)}}{above}
\ocdx[2]{o}{a3}{60}{T^{(m-1)}}{above}
\ocdx[2]{o}{a4}{20}{T^{(m)}}{right}
\node at (140:\xch) {$x_1$};
\node at (90:\xch) {$\cdots$};
\node at (40:\xch) {$x_m$};
\node[below] at ($(o)!0.7!(a1)$) {$\alpha_0$};
\node[below] at ($(o)!0.7!(a4)$) {$\alpha_{m}$};
\node[right] at ($(o)!0.75!(a2)$) {$\alpha_1$};
\node[right] at ($(o)!0.75!(a3)$) {$\alpha_{m-1}\!$};
}\diamond \treeoo{\cdb o\cdx[2]{o}{a1}{160}%{T^{(0)}}{left}
\ocdx[2]{o}{a2}{120}{U^{(1)}}{above}
\ocdx[2]{o}{a3}{60}{U^{(n-1)}}{above}
\ocdx[2]{o}{a4}{20}{U^{(n)}}{right}
\node at (140:\xch) {$y_1$};
\node at (90:\xch) {$\cdots$};
\node at (40:\xch) {$y_n$};
%\node[below] at ($(o)!0.7!(a1)$) {$\alpha_0$};
\node[below] at ($(o)!0.7!(a4)$) {$\beta_{n}$};
\node[right] at ($(o)!0.75!(a2)$) {$\beta_1$};
\node[right] at ($(o)!0.75!(a3)$) {$\beta_{n-1}\!$};
}
=\treeoo{\cdb o
\ocdx[2.5]{o}{a1}{170}{T^{(0)}}{170}
\ocdx[2.5]{o}{a2}{145}{T^{(1)}}{145}
\ocdx[2.5]{o}{a3}{115}{T^{(m-1)}}{115}
\ocdx[2.5]{o}{a4}{90}{T^{(m)}}{90}
\ocdx[2.5]{o}{a5}{65}{U^{(0)}}{65}
\ocdx[2.5]{o}{a6}{35}{U^{(n-1)}}{35}
\ocdx[2.5]{o}{a7}{10}{U^{(n)}}{10}
\node at (157:1.3*\xch) {$x_1$};
\node[rotate=40] at (129:2.5*\xch) {$\cdots$};
\node at (102:1.5*\xch) {$x_m$};
\node at (75:1.5*\xch) {$y_1$};
\node[rotate=-40] at (49:2.5*\xch) {$\cdots$};
\node at (20:1.5*\xch) {$y_n$};
\node[below] at ($(o)!0.7!(a1)$) {$\alpha_0$};
\node[below] at ($(o)!0.8!(a7)$) {\tiny$\beta_{n}$};
\node[below] at ($(o)!0.9!(a2)$) {\tiny$\alpha_1$};
\node[left=-1pt] at ($(o)!0.65!(a3)$) {\tiny$\alpha_{n-1}$};
\node[right=1pt] at ($(o)!0.65!(a5)$) {\tiny$\beta_1$};
\node[right=1pt] at ($(o)!0.65!(a6)$) {\tiny$\beta_{m-1}$};
\node[right] at ($(o)!0.8!(a4)$) {\tiny$\alpha_{m}$};
}\in\DTF.%\overset{(\mref{eq:def4})}{=}
\end{align*}
\noindent {\bf Case 7:} $T^{(m)}= |\neq U^{(0)}$. This case is similar to Case 6.

\noindent {\bf Case 8:} $T^{(m)}\neq |\neq U^{(0)}$. Then
{\small{\begin{align*}
T\cdot' U&=T\diamond U
=\treeoo{\cdb o\ocdx[2]{o}{a1}{160}{T^{(0)}}{left}
\ocdx[2]{o}{a2}{120}{T^{(1)}}{above}
\ocdx[2]{o}{a3}{60}{T^{(m-1)}}{above}
\ocdx[2]{o}{a4}{20}{T^{(m)}}{right}
\node at (140:\xch) {$x_1$};
\node at (90:\xch) {$\cdots$};
\node at (40:\xch) {$x_m$};
\node[below] at ($(o)!0.7!(a1)$) {$\alpha_0$};
\node[below] at ($(o)!0.7!(a4)$) {$\alpha_{m}$};
\node[right] at ($(o)!0.75!(a2)$) {$\alpha_1$};
\node[right] at ($(o)!0.75!(a3)$) {$\alpha_{m-1}\!$};
}
\diamond \treeoo{\cdb o\ocdx[2]{o}{a1}{160}{U^{(0)}}{left}
\ocdx[2]{o}{a2}{120}{U^{(1)}}{above}
\ocdx[2]{o}{a3}{60}{U^{(n-1)}}{above}
\ocdx[2]{o}{a4}{20}{U^{(n)}}{right}
\node at (140:\xch) {$y_1$};
\node at (90:\xch) {$\cdots$};
\node at (40:\xch) {$y_n$};
\node[below] at ($(o)!0.7!(a1)$) {$\beta_0$};
\node[below] at ($(o)!0.7!(a4)$) {$\beta_{n}$};
\node[right] at ($(o)!0.75!(a2)$) {$\beta_1$};
\node[right] at ($(o)!0.75!(a3)$) {$\beta_{n-1}\!$};
}\\
&=\Biggl(\treeoo{\cdb o\ocdx[2]{o}{a1}{160}{T^{(0)}}{left}
\ocdx[2]{o}{a2}{120}{T^{(1)}}{above}
\ocdx[2]{o}{a3}{60}{T^{(m-1)}}{above}
\cdx[2]{o}{a4}{20}%{T^{(m)}}{right}
\node at (140:\xch) {$x_1$};
\node at (90:\xch) {$\cdots$};
\node at (40:\xch) {$x_m$};
\node[below] at ($(o)!0.7!(a1)$) {$\alpha_0$};
%\node[below] at ($(o)!0.7!(a4)$) {$\alpha_{m}$};
\node[right] at ($(o)!0.75!(a2)$) {$\alpha_1$};
\node[right] at ($(o)!0.75!(a3)$) {$\alpha_{m-1}\!$};
}
\diamond \Big(B^{+}_{\alpha_m}(T^{(m)})\diamond B^{+}_{\beta_0}(U^{(0)})\Big)\Biggl)\diamond \treeoo{\cdb o\cdx[2]{o}{a1}{160}%{T^{(0)}}{left}
\ocdx[2]{o}{a2}{120}{U^{(1)}}{above}
\ocdx[2]{o}{a3}{60}{U^{(n-1)}}{above}
\ocdx[2]{o}{a4}{20}{U^{(n)}}{right}
\node at (140:\xch) {$y_1$};
\node at (90:\xch) {$\cdots$};
\node at (40:\xch) {$y_n$};
%\node[below] at ($(o)!0.7!(a1)$) {$\alpha_0$};
\node[below] at ($(o)!0.7!(a4)$) {$\beta_{n}$};
\node[right] at ($(o)!0.75!(a2)$) {$\beta_1$};
\node[right] at ($(o)!0.75!(a3)$) {$\beta_{n-1}\!$};
}\quad(\text{by Eq.~(\mref{eq:def6})})\\
&=\Biggl(\treeoo{\cdb o\ocdx[2]{o}{a1}{160}{T^{(0)}}{left}
\ocdx[2]{o}{a2}{120}{T^{(1)}}{above}
\ocdx[2]{o}{a3}{60}{T^{(m-1)}}{above}
\cdx[2]{o}{a4}{20}%{T^{(m)}}{right}
\node at (140:\xch) {$x_1$};
\node at (90:\xch) {$\cdots$};
\node at (40:\xch) {$x_m$};
\node[below] at ($(o)!0.7!(a1)$) {$\alpha_0$};
%\node[below] at ($(o)!0.7!(a4)$) {$\alpha_{m}$};
\node[right] at ($(o)!0.75!(a2)$) {$\alpha_1$};
\node[right] at ($(o)!0.75!(a3)$) {$\alpha_{m-1}\!$};
}\diamond B^{+}_{\alpha_m\beta_0}\Big(B^{+}_{\alpha_m}(T^{(m)})\diamond U^{(0)}+T^{(m)}\diamond B^{+}_{\beta_0}(U^{(0)})+T^{(m)}\diamond U^{(0)}\Big)\Biggl)\diamond
\treeoo{\cdb o\cdx[2]{o}{a1}{160}%{T^{(0)}}{left}
\ocdx[2]{o}{a2}{120}{U^{(1)}}{above}
\ocdx[2]{o}{a3}{60}{U^{(n-1)}}{above}
\ocdx[2]{o}{a4}{20}{U^{(n)}}{right}
\node at (140:\xch) {$y_1$};
\node at (90:\xch) {$\cdots$};
\node at (40:\xch) {$y_n$};
%\node[below] at ($(o)!0.7!(a1)$) {$\alpha_0$};
\node[below] at ($(o)!0.7!(a4)$) {$\beta_{n}$};
\node[right] at ($(o)!0.75!(a2)$) {$\beta_1$};
\node[right] at ($(o)!0.75!(a3)$) {$\beta_{n-1}\!$};
}\\
&\hspace{3cm}\text{(by Theorem ~(\mref{thm:free}) of weight $1$)}\\
&=\treeoo{\cdb o
\ocdx[2.5]{o}{a1}{170}{T^{(0)}}{170}
\ocdx[2.5]{o}{a2}{145}{T^{(1)}}{145}
\ocdx[2.5]{o}{a3}{115}{T^{(m-1)}}{115}
\ocdx[4.0]{o}{a4}{90}{B^{+}_{\alpha_m}(T^{(m)})\diamond U^{(0)}}{90}
\ocdx[2.5]{o}{a5}{65}{U^{(1)}}{65}
\ocdx[2.5]{o}{a6}{35}{U^{(n-1)}}{35}
\ocdx[2.5]{o}{a7}{10}{U^{(n)}}{10}
\node at (157:1.3*\xch) {$x_1$};
\node[rotate=40] at (129:2.5*\xch) {$\cdots$};
\node at (102:1.5*\xch) {$x_m$};
\node at (75:1.5*\xch) {$y_1$};
\node[rotate=-40] at (49:2.5*\xch) {$\cdots$};
\node at (20:1.5*\xch) {$y_n$};
\node[below] at ($(o)!0.7!(a1)$) {$\alpha_0$};
\node[below] at ($(o)!0.8!(a7)$) {\tiny$\beta_{n}$};
\node[below] at ($(o)!0.9!(a2)$) {\tiny$\alpha_1$};
\node[left=-1pt] at ($(o)!0.65!(a3)$) {\tiny$\alpha_{m-1}$};
\node[right=1pt] at ($(o)!0.65!(a5)$) {\tiny$\beta_1$};
\node[right=1pt] at ($(o)!0.65!(a6)$) {\tiny$\beta_{m-1}$};
\node[right] at ($(o)!0.9!(a4)$) {\tiny$\alpha_{m}\beta_0$};
}+\treeoo{\cdb o
\ocdx[2.5]{o}{a1}{170}{T^{(0)}}{170}
\ocdx[2.5]{o}{a2}{145}{T^{(1)}}{145}
\ocdx[2.5]{o}{a3}{115}{T^{(m-1)}}{115}
\ocdx[4.0]{o}{a4}{90}{T^{(m)}\diamond B^{+}_{\beta_0}(U^{(0)})}{90}
\ocdx[2.5]{o}{a5}{65}{U^{(1)}}{65}
\ocdx[2.5]{o}{a6}{35}{U^{(n-1)}}{35}
\ocdx[2.5]{o}{a7}{10}{U^{(n)}}{10}
\node at (157:1.3*\xch) {$x_1$};
\node[rotate=40] at (129:2.5*\xch) {$\cdots$};
\node at (102:1.5*\xch) {$x_m$};
\node at (75:1.5*\xch) {$y_1$};
\node[rotate=-40] at (49:2.5*\xch) {$\cdots$};
\node at (20:1.5*\xch) {$y_n$};
\node[below] at ($(o)!0.7!(a1)$) {$\alpha_0$};
\node[below] at ($(o)!0.8!(a7)$) {\tiny$\beta_{n}$};
\node[below] at ($(o)!0.9!(a2)$) {\tiny$\alpha_1$};
\node[left=-1pt] at ($(o)!0.65!(a3)$) {\tiny$\alpha_{m-1}$};
\node[right=1pt] at ($(o)!0.65!(a5)$) {\tiny$\beta_1$};
\node[right=1pt] at ($(o)!0.65!(a6)$) {\tiny$\beta_{m-1}$};
\node[right] at ($(o)!0.9!(a4)$) {\tiny$\alpha_{m}\beta_0$};
}+\treeoo{\cdb o
\ocdx[2.5]{o}{a1}{170}{T^{(0)}}{170}
\ocdx[2.5]{o}{a2}{145}{T^{(1)}}{145}
\ocdx[2.5]{o}{a3}{115}{T^{(m-1)}}{115}
\ocdx[4.0]{o}{a4}{90}{T^{(m)}\diamond U^{(0)}}{90}
\ocdx[2.5]{o}{a5}{65}{U^{(1)}}{65}
\ocdx[2.5]{o}{a6}{35}{U^{(n-1)}}{35}
\ocdx[2.5]{o}{a7}{10}{U^{(n)}}{10}
\node at (157:1.3*\xch) {$x_1$};
\node[rotate=40] at (129:2.5*\xch) {$\cdots$};
\node at (102:1.5*\xch) {$x_m$};
\node at (75:1.5*\xch) {$y_1$};
\node[rotate=-40] at (49:2.5*\xch) {$\cdots$};
\node at (20:1.5*\xch) {$y_n$};
\node[below] at ($(o)!0.7!(a1)$) {$\alpha_0$};
\node[below] at ($(o)!0.8!(a7)$) {\tiny$\beta_{n}$};
\node[below] at ($(o)!0.9!(a2)$) {\tiny$\alpha_1$};
\node[left=-1pt] at ($(o)!0.65!(a3)$) {\tiny$\alpha_{m-1}$};
\node[right=1pt] at ($(o)!0.65!(a5)$) {\tiny$\beta_1$};
\node[right=1pt] at ($(o)!0.65!(a6)$) {\tiny$\beta_{m-1}$};
\node[right] at ($(o)!0.9!(a4)$) {\tiny$\alpha_{m}\beta_0$};
}\\
&\hspace{3cm}\quad(\text{by Eqs.~(\mref{eq:def4}) and ~(\mref{eq:def5})}).
\end{align*}}}
By the induction hypothesis, $B^{+}_{\alpha_m}(T^{(m)})\diamond U^{(0)}, T^{(m)}\diamond B^{+}_{\beta_0}(U^{(0)})$ and $T^{(m)}\diamond U^{(0)}\in \DTF$ and so $T\cdot' U\in\DTF.$
We are left to show
\begin{equation}
 T\prec_\omega U=T\prec'_\omega U,\,T\succ_\omega U=T\succ'_\omega U\text{ and }\,T\cdot U=T\cdot' U,
\mlabel{eq:cont}
\end{equation}
for
\begin{align*}
T=& \ \bigv x_1,\ldots,x_m;m+1;\alpha_0,\ldots,\alpha_m;(T^{(0)},\ldots, T^{(m)})\in T_{m},\\
U=& \ \bigv y_1,\ldots,y_n;n+1;\beta_0,\ldots,\beta_n;(U^{(0)},\ldots, U^{(n)})\in T_{n}\,\text{ with }\,m,n\geq 1.
\end{align*}
We apply induction on $\dep(T)+\dep(U)\geq 2.$
For the initial step  $\dep(T)+\dep(U)=2$, we have
$T=\treeoo{\cdb o\cdx[1.5]{o}{a1}{160}
\cdx[1.5]{o}{a2}{120}
\cdx[1.5]{o}{a3}{60}
\cdx[1.5]{o}{a4}{20}
%\node[below] at ($(o)!0.7!(a1)$) {$\omega$};
\node at (140:1.2*\xch) {$x_1$};
\node at (90:1.2*\xch) {$\cdots$};
\node at (40:1.2*\xch) {$x_m$};
}$ and $U=\treeoo{\cdb o\cdx[1.5]{o}{a1}{160}
\cdx[1.5]{o}{a2}{120}
\cdx[1.5]{o}{a3}{60}
\cdx[1.5]{o}{a4}{20}
%\node[below] at ($(o)!0.7!(a1)$) {$\omega$};
\node at (140:1.2*\xch) {$y_1$};
\node at (90:1.2*\xch) {$\cdots$};
\node at (40:1.2*\xch) {$y_n$};
}$. Then
\begin{align*}
T\prec_\omega U&=\bigv x_1,\ldots,x_{m-1},x_m;m+1;1,\ldots,1,\omega; (|,\ldots,|,|\succ_1 U+|\prec_\omega U+|\cdot U)\quad(\text{by Eq.~(\mref{eq:tdpre})})\\
&=\bigv x_1,\ldots,x_{m-1},x_m;m+1;1,\ldots,1,\omega; (|,\ldots,|,U)\quad(\text{by Item~\mref{it:trida} in Definition~\mref{defn:trirec} and Eq.~(\mref{eq:con})})\\
&=\treeoo{\cdb o\cdx[1.5]{o}{a1}{160}
\cdx[1.5]{o}{a2}{120}
\cdx[1.5]{o}{a3}{60}
\ocdx[1.5]{o}{a4}{20}{U}{20}
\node[below] at ($(o)!0.7!(a4)$) {$\omega$};
\node at (140:1.2*\xch) {$x_1$};
\node at (90:1.2*\xch) {$\cdots$};
\node at (40:1.2*\xch) {$x_m$};
}
=\treeoo{\cdb o\cdx[1.5]{o}{a1}{160}
\cdx[1.5]{o}{a2}{120}
\cdx[1.5]{o}{a3}{60}
\cdx[1.5]{o}{a4}{20}
%\node[below] at ($(o)!0.7!(a4)$) {$\omega$};
\node at (140:1.2*\xch) {$x_1$};
\node at (90:1.2*\xch) {$\cdots$};
\node at (40:1.2*\xch) {$x_m$};
}\diamond B^{+}_\omega(U)\quad(\text{by Eq.~(\mref{eq:def5})})\\
&=T\diamond B^{+}_\omega(U)=T\prec'_\omega U\quad(\text{by Eq.~(\mref{eq:trid})}),
\end{align*}
and
\begin{align*}
T\succ_\omega U&=\bigv y_1,y_2,\ldots,y_n;n+1;\omega,1,\ldots,1;  (T\succ_\omega |+T\prec_{1} |+T\cdot |,|,\ldots,|)
\quad(\text{by Eq.~(\mref{eq:tdsuc})})\\
&=\bigv y_1,y_2,\ldots,y_n;n+1;\omega,1,\ldots,1;  (T,|,\ldots,|)\quad(\text{by Item~\mref{it:trida} in Definition~\mref{defn:trirec} and Eq.~(\mref{eq:con})})\\
&=\treeoo{\cdb o\ocdx[1.5]{o}{a1}{160}{T}{160}
\cdx[1.5]{o}{a2}{120}
\cdx[1.5]{o}{a3}{60}
\cdx[1.5]{o}{a4}{20}
\node[below] at ($(o)!0.7!(a1)$) {$\omega$};
\node at (140:1.2*\xch) {$y_1$};
\node at (90:1.2*\xch) {$\cdots$};
\node at (40:1.2*\xch) {$y_n$};
}
=B^{+}_\omega(T)\diamond \treeoo{\cdb o\cdx[1.5]{o}{a1}{160}
\cdx[1.5]{o}{a2}{120}
\cdx[1.5]{o}{a3}{60}
\cdx[1.5]{o}{a4}{20}
%\node[below] at ($(o)!0.7!(a1)$) {$\omega$};
\node at (140:1.2*\xch) {$y_1$};
\node at (90:1.2*\xch) {$\cdots$};
\node at (40:1.2*\xch) {$y_n$};
}\quad(\text{by Eq.~(\mref{eq:def4})})\\
&=B^{+}_\omega(T)\diamond U=T\succ'_\omega U\quad(\text{by Eq.~(\mref{eq:trid})}),
\end{align*}
and
\begin{align*}
T\cdot U&=\bigv x_1,\ldots,x_{m-1},x_m,y_1,\ldots,y_n;m+n+1;1,\ldots,1,1,1,
 \ldots,1;
 (|,\ldots,|,|\succ_1 |+|\prec_1 |+|\cdot |, |,\ldots,|)\quad(\text{by Eq.~(\mref{eq:tdcdot})})\\
 &=\bigv x_1,\ldots,x_{m-1},x_m,y_1,\ldots,y_n;m+n+1;1,\ldots,1,1,1,
 \ldots,1;
 (|,\ldots,|,|, |,\ldots,|)\quad(\text{by Eq.~(\mref{eq:1star1})})\\
 &=\treeoo{\cdb o
\cdx[2.2]{o}{a1}{170}
\cdx[2.2]{o}{a2}{145}
\cdx[2.2]{o}{a3}{115}
\cdx[2.2]{o}{a4}{90}
\cdx[2.2]{o}{a5}{65}
\cdx[2.2]{o}{a6}{35}
\cdx[2.2]{o}{a7}{10}
\node at ($(a1)!0.5!(a2)$) {$x_1$};
\node[rotate=40] at ($(a2)!0.5!(a3)$) {$\cdots$};
\node at ($(a3)!0.5!(a4)$) {$x_m$};
\node at ($(a4)!0.5!(a5)$) {$y_1$};
\node[rotate=-40] at ($(a5)!0.5!(a6)$) {$\cdots$};
\node at ($(a6)!0.5!(a7)$) {$y_n$};
}=\treeoo{\cdb o\cdx[1.5]{o}{a1}{160}
\cdx[1.5]{o}{a2}{120}
\cdx[1.5]{o}{a3}{60}
\cdx[1.5]{o}{a4}{20}
%\node[below] at ($(o)!0.7!(a4)$) {$\omega$};
\node at (140:1.2*\xch) {$x_1$};
\node at (90:1.2*\xch) {$\cdots$};
\node at (40:1.2*\xch) {$x_m$};
}\diamond \treeoo{\cdb o\cdx[1.5]{o}{a1}{160}
\cdx[1.5]{o}{a2}{120}
\cdx[1.5]{o}{a3}{60}
\cdx[1.5]{o}{a4}{20}
%\node[below] at ($(o)!0.7!(a4)$) {$\omega$};
\node at (140:1.2*\xch) {$y_1$};
\node at (90:1.2*\xch) {$\cdots$};
\node at (40:1.2*\xch) {$y_n$};
}\quad(\text{by Eq.~(\mref{eq:def1})})\\
&=T\diamond U=T\cdot' U\quad(\text{by Eq.~(\mref{eq:trid})}).
\end{align*}
%This completes the proof of initial step.
For the induction step  $\dep(T)+\dep(U)\geq 3$, we first prove the first statement of Eq.~(\mref{eq:cont}). There are two cases to consider.

\noindent {\bf Case 1:} $T^{(m)}=|$. Then
\begin{align*}
T\prec_\omega U&=\bigv x_1,\ldots,x_{m-1},x_m;m+1;\alpha_0,\ldots,\alpha_{m-1},\omega; (T^{(0)},\ldots,T^{(m-1)},|\succ_1 U+|\prec_\omega U+|\cdot U)\quad(\text{by Eq.~(\mref{eq:tdpre})})\\
&=\bigv x_1,\ldots,x_{m-1},x_m;m+1;\alpha_0,\ldots,\alpha_{m-1},\omega; (T^{(0)},\ldots,T^{(m-1)},U)\quad(\text{by Item~\mref{it:trida} in Definition~\mref{defn:trirec} and Eq.~(\mref{eq:con})})\\
&=\treeoo{\cdb o\ocdx[2]{o}{a1}{160}{T^{(0)}}{left}
\ocdx[2]{o}{a2}{120}{T^{(1)}}{above}
\ocdx[2]{o}{a3}{60}{T^{(m-1)}}{above}
\ocdx[2]{o}{a4}{20}{U}{right}
\node at (140:\xch) {$x_1$};
\node at (90:\xch) {$\cdots$};
\node at (40:\xch) {$x_m$};
\node[below] at ($(o)!0.7!(a1)$) {$\alpha_0$};
\node[below] at ($(o)!0.7!(a4)$) {$\omega$};
\node[right] at ($(o)!0.75!(a2)$) {$\alpha_1$};
\node[right] at ($(o)!0.75!(a3)$) {$\alpha_{m-1}\!$};
}
=\treeoo{\cdb o\ocdx[2]{o}{a1}{160}{T^{(0)}}{left}
\ocdx[2]{o}{a2}{120}{T^{(1)}}{above}
\ocdx[2]{o}{a3}{60}{T^{(m-1)}}{above}
\cdx[2]{o}{a4}{20}%{T^{(m)}}{right}
\node at (140:\xch) {$x_1$};
\node at (90:\xch) {$\cdots$};
\node at (40:\xch) {$x_m$};
\node[below] at ($(o)!0.7!(a1)$) {$\alpha_0$};
%\node[below] at ($(o)!0.7!(a4)$) {$\alpha_{m}$};
\node[right] at ($(o)!0.75!(a2)$) {$\alpha_1$};
\node[right] at ($(o)!0.75!(a3)$) {$\alpha_{m-1}\!$};
}\diamond B^{+}_\omega(U)\quad(\text{by Eq.~(\mref{eq:def5})})\\
&=T\diamond B^{+}_\omega(U)=T\prec'_\omega U\quad(\text{by Eq.~(\mref{eq:trid})}).
\end{align*}
\noindent {\bf Case 2:} $T^{(m)}\neq|$. Then
\begin{align*}
T\prec_\omega U&=\bigv x_1,\ldots,x_{m-1},x_m;m+1;\alpha_0,\ldots,\alpha_{m-1},\alpha_m\omega; (T^{(0)},\ldots,T^{(m-1)},T^{(m)}\succ_{\alpha_m} U+T^{(m)}\prec_\omega U+T^{(m)}\cdot U)\quad(\text{by Eq.~(\mref{eq:tdpre})})\\
&=\bigv x_1,\ldots,x_{m-1},x_m;m+1;\alpha_0,\ldots,\alpha_{m-1},\alpha_m\omega; (T^{(0)},\ldots,T^{(m-1)},T^{(m)}\succ'_{\alpha_m} U+T^{(m)}\prec'_\omega U+T^{(m)}\cdot' U)\\
&\hspace{5cm}(\text{by the induction hypothesis})\\
&=\bigv x_1,\ldots,x_{m-1},x_m;m+1;\alpha_0,\ldots,\alpha_{m-1},\alpha_m\omega; (T^{(0)},\ldots,T^{(m-1)},B^{+}_{\alpha_m}(T^{(m)})\diamond U+T^{(m)}\diamond B^{+}_\omega(U)+T^{(m)}\diamond U)\\
&\hspace{5cm}(\text{by Eq.~(\mref{eq:trid})})\\
&=\treeoo{\cdb o\ocdx[2]{o}{a1}{160}{T^{(0)}}{left}
\ocdx[2]{o}{a2}{120}{T^{(1)}}{above}
\ocdx[2]{o}{a3}{60}{T^{(m-1)}}{above}
\ocdx[2]{o}{a4}{20}{\Big(B^{+}_{\alpha_m}(T^{(m)})\diamond U+T^{(m)}\diamond B^{+}_\omega(U)+T^{(m)}\diamond U\Big)}{right}
\node at (140:\xch) {$x_1$};
\node at (90:\xch) {$\cdots$};
\node at (40:\xch) {$x_m$};
\node[below] at ($(o)!0.7!(a1)$) {$\alpha_0$};
\node[below] at ($(o)!0.7!(a4)$) {$\alpha_{m}\omega$};
\node[right] at ($(o)!0.75!(a2)$) {$\alpha_1$};
\node[right] at ($(o)!0.75!(a3)$) {$\alpha_{m-1}\!$};
}\\
&=\treeoo{\cdb o\ocdx[2]{o}{a1}{160}{T^{(0)}}{left}
\ocdx[2]{o}{a2}{120}{T^{(1)}}{above}
\ocdx[2]{o}{a3}{60}{T^{(m-1)}}{above}
\cdx[2]{o}{a4}{20}
\node at (140:\xch) {$x_1$};
\node at (90:\xch) {$\cdots$};
\node at (40:\xch) {$x_m$};
\node[below] at ($(o)!0.7!(a1)$) {$\alpha_0$};
%\node[below] at ($(o)!0.7!(a4)$) {$\alpha_{m}\omega$};
\node[right] at ($(o)!0.75!(a2)$) {$\alpha_1$};
\node[right] at ($(o)!0.75!(a3)$) {$\alpha_{m-1}\!$};
}\diamond B^{+}_{\alpha_m\omega}\Big(B^{+}_{\alpha_m}(T^{(m)})\diamond U+T^{(m)}\diamond B^{+}_\omega(U)+T^{(m)}\diamond U\Big)\quad(\text{by Eq.~(\mref{eq:def5})})\\
&=\treeoo{\cdb o\ocdx[2]{o}{a1}{160}{T^{(0)}}{left}
\ocdx[2]{o}{a2}{120}{T^{(1)}}{above}
\ocdx[2]{o}{a3}{60}{T^{(m-1)}}{above}
\cdx[2]{o}{a4}{20}
\node at (140:\xch) {$x_1$};
\node at (90:\xch) {$\cdots$};
\node at (40:\xch) {$x_m$};
\node[below] at ($(o)!0.7!(a1)$) {$\alpha_0$};
%\node[below] at ($(o)!0.7!(a4)$) {$\alpha_{m}\omega$};
\node[right] at ($(o)!0.75!(a2)$) {$\alpha_1$};
\node[right] at ($(o)!0.75!(a3)$) {$\alpha_{m-1}\!$};
}\diamond \Big(B^{+}_{\alpha_m}(T^{(m)})\diamond B^{+}_{\omega}(U)\Big)\quad(\text{by Theorem~\mref{thm:free} of weight $1$})\\
&=\Big(\treeoo{\cdb o\ocdx[2]{o}{a1}{160}{T^{(0)}}{left}
\ocdx[2]{o}{a2}{120}{T^{(1)}}{above}
\ocdx[2]{o}{a3}{60}{T^{(m-1)}}{above}
\cdx[2]{o}{a4}{20}
\node at (140:\xch) {$x_1$};
\node at (90:\xch) {$\cdots$};
\node at (40:\xch) {$x_m$};
\node[below] at ($(o)!0.7!(a1)$) {$\alpha_0$};
%\node[below] at ($(o)!0.7!(a4)$) {$\alpha_{m}\omega$};
\node[right] at ($(o)!0.75!(a2)$) {$\alpha_1$};
\node[right] at ($(o)!0.75!(a3)$) {$\alpha_{m-1}\!$};
}\diamond B^{+}_{\alpha_m}(T^{(m)})\Big)\diamond B^{+}_{\omega}(U)\\
&=\treeoo{\cdb o\ocdx[2]{o}{a1}{160}{T^{(0)}}{left}
\ocdx[2]{o}{a2}{120}{T^{(1)}}{above}
\ocdx[2]{o}{a3}{60}{T^{(m-1)}}{above}
\ocdx[2]{o}{a4}{20}{T^{(m)}}{right}
\node at (140:\xch) {$x_1$};
\node at (90:\xch) {$\cdots$};
\node at (40:\xch) {$x_m$};
\node[below] at ($(o)!0.7!(a1)$) {$\alpha_0$};
\node[below] at ($(o)!0.7!(a4)$) {$\alpha_{m}$};
\node[right] at ($(o)!0.75!(a2)$) {$\alpha_1$};
\node[right] at ($(o)!0.75!(a3)$) {$\alpha_{m-1}\!$};
}\diamond B^{+}_\omega(U)\quad(\text{by Eq.~(\mref{eq:def5})})\\
&=T\diamond B^{+}_\omega(U)=T\prec'_\omega U\quad(\text{by Eq.~(\mref{eq:trid})}).
\end{align*}
We next prove the second statement of Eq.~(\mref{eq:cont}), and there are also two cases to consider.

\noindent
{\bf Case 3:} $U^{(0)}=|.$ Then
\begin{align*}
T\succ_\omega U&=\bigv y_1,y_2,\ldots,y_n;n+1;\omega,\,\beta_1,\ldots,\beta_n;  (T\succ_\omega |+T\prec_1 |+T\cdot |,U^{(1)},\ldots,U^{(n)})\quad(\text{by Eq.~(\mref{eq:tdsuc})})\\
&=\bigv y_1,y_2,\ldots,y_n;n+1;\omega,\,\beta_1,\ldots,\beta_n;  (T,U^{(1)},\ldots,U^{(n)})\quad(\text{by Item~\mref{it:trida} in Definition~\mref{defn:trirec} and Eq.~(\mref{eq:con})})\\
&=\treeoo{\cdb o\ocdx[2]{o}{a1}{160}{T}{left}
\ocdx[2]{o}{a2}{120}{U^{(1)}}{above}
\ocdx[2]{o}{a3}{60}{U^{(n-1)}}{above}
\ocdx[2]{o}{a4}{20}{U^{(n)}}{right}
\node at (140:\xch) {$y_1$};
\node at (90:\xch) {$\cdots$};
\node at (40:\xch) {$y_n$};
\node[below] at ($(o)!0.7!(a1)$) {$\omega$};
\node[below] at ($(o)!0.7!(a4)$) {$\beta_{n}$};
\node[right] at ($(o)!0.75!(a2)$) {$\beta_1$};
\node[right] at ($(o)!0.75!(a3)$) {$\beta_{n-1}\!$};
}
=B^{+}_\omega(T)\diamond \treeoo{\cdb o\cdx[2]{o}{a1}{160}
\ocdx[2]{o}{a2}{120}{U^{(1)}}{above}
\ocdx[2]{o}{a3}{60}{U^{(n-1)}}{above}
\ocdx[2]{o}{a4}{20}{U^{(n)}}{right}
\node at (140:\xch) {$y_1$};
\node at (90:\xch) {$\cdots$};
\node at (40:\xch) {$y_n$};
%\node[below] at ($(o)!0.7!(a1)$) {$\omega$};
\node[below] at ($(o)!0.7!(a4)$) {$\beta_{n}$};
\node[right] at ($(o)!0.75!(a2)$) {$\beta_1$};
\node[right] at ($(o)!0.75!(a3)$) {$\beta_{n-1}\!$};
}\quad(\text{by Eq.~(\mref{eq:def4})})\\
&=B^{+}_\omega(T)\diamond U=T\succ'_\omega U\quad(\text{by Eq.~(\mref{eq:trid})}).
\end{align*}
\noindent {\bf Case 4:} $U^{(0)}\neq |$. Then
\begin{align*}
T\succ_\omega U&=\bigv y_1,y_2,\ldots,y_n;n+1;\omega\beta_0,\,\beta_1,\ldots,\beta_n;  (T\succ_\omega U^{(0)}+T\prec_{\beta_0} U^{(0)}+T\cdot U^{(0)},U^{(1)},\ldots,U^{(n)})\quad(\text{by Eq.~(\mref{eq:tdsuc})})\\
&=\bigv y_1,y_2,\ldots,y_n;n+1;\omega\beta_0,\,\beta_1,\ldots,\beta_n;  (T\succ'_\omega U^{(0)}+T\prec'_{\beta_0} U^{(0)}+T\cdot' U^{(0)},U^{(1)},\ldots,U^{(n)})\\
 &\hspace{5cm}(\text{by the induction hypothesis})\\
 &=\bigv y_1,y_2,\ldots,y_n;n+1;\omega\beta_0,\,\beta_1,\ldots,\beta_n;  (B^{+}_\omega(T)\diamond U^{(0)}+T\diamond B^{+}_{\beta_0}(U^{(0)})+T\diamond U^{(0)},U^{(1)},\ldots,U^{(n)})\\
 &\hspace{5cm}(\text{by Eq.~(\mref{eq:trid})})\\
&=\treeoo{\cdb o\ocdx[2]{o}{a1}{160}{\Big(B^{+}_\omega(T)\diamond U^{(0)}+T\diamond B^{+}_{\beta_0}(U^{(0)})+T\diamond U^{(0)}\Big)}{left}
\ocdx[2]{o}{a2}{120}{U^{(1)}}{above}
\ocdx[2]{o}{a3}{60}{U^{(n-1)}}{above}
\ocdx[2]{o}{a4}{20}{U^{(n)}}{right}
\node at (140:\xch) {$y_1$};
\node at (90:\xch) {$\cdots$};
\node at (40:\xch) {$y_n$};
\node[below] at ($(o)!0.7!(a1)$) {$\omega\beta_0$};
\node[below] at ($(o)!0.7!(a4)$) {$\beta_{n}$};
\node[right] at ($(o)!0.75!(a2)$) {$\beta_1$};
\node[right] at ($(o)!0.75!(a3)$) {$\beta_{n-1}\!$};
}\\
&=B^{+}_{\omega\beta_0}\Big(B^{+}_\omega(T)\diamond U^{(0)}+T\diamond B^{+}_{\beta_0}(U^{(0)}
)+T\diamond U^{(0)}\Big)\diamond \treeoo{\cdb o\cdx[2]{o}{a1}{160}
\ocdx[2]{o}{a2}{120}{U^{(1)}}{above}
\ocdx[2]{o}{a3}{60}{U^{(n-1)}}{above}
\ocdx[2]{o}{a4}{20}{U^{(n)}}{right}
\node at (140:\xch) {$y_1$};
\node at (90:\xch) {$\cdots$};
\node at (40:\xch) {$y_n$};
%\node[below] at ($(o)!0.7!(a1)$) {$\omega\beta_0$};
\node[below] at ($(o)!0.7!(a4)$) {$\beta_{n}$};
\node[right] at ($(o)!0.75!(a2)$) {$\beta_1$};
\node[right] at ($(o)!0.75!(a3)$) {$\beta_{n-1}\!$};
}\quad(\text{by Eq.~(\mref{eq:def4})})\\
&=\Big(B^{+}_\omega(T)\diamond B^{+}_{\beta_0}(U^{(0)})\Big)\diamond
\treeoo{\cdb o\cdx[2]{o}{a1}{160}
\ocdx[2]{o}{a2}{120}{U^{(1)}}{above}
\ocdx[2]{o}{a3}{60}{U^{(n-1)}}{above}
\ocdx[2]{o}{a4}{20}{U^{(n)}}{right}
\node at (140:\xch) {$y_1$};
\node at (90:\xch) {$\cdots$};
\node at (40:\xch) {$y_n$};
%\node[below] at ($(o)!0.7!(a1)$) {$\omega\beta_0$};
\node[below] at ($(o)!0.7!(a4)$) {$\beta_{n}$};
\node[right] at ($(o)!0.75!(a2)$) {$\beta_1$};
\node[right] at ($(o)!0.75!(a3)$) {$\beta_{n-1}\!$};
}\quad(\text{by Theorem~\mref{thm:free} of weight $1$})\\
&=B^{+}_\omega(T)\diamond \Big(B^{+}_{\beta_0}(U^{(0)})\diamond \treeoo{\cdb o\cdx[2]{o}{a1}{160}
\ocdx[2]{o}{a2}{120}{U^{(1)}}{above}
\ocdx[2]{o}{a3}{60}{U^{(n-1)}}{above}
\ocdx[2]{o}{a4}{20}{U^{(n)}}{right}
\node at (140:\xch) {$y_1$};
\node at (90:\xch) {$\cdots$};
\node at (40:\xch) {$y_n$};
%\node[below] at ($(o)!0.7!(a1)$) {$\omega\beta_0$};
\node[below] at ($(o)!0.7!(a4)$) {$\beta_{n}$};
\node[right] at ($(o)!0.75!(a2)$) {$\beta_1$};
\node[right] at ($(o)!0.75!(a3)$) {$\beta_{n-1}\!$};
}\Big)\\
&=B^{+}_\omega(T)\diamond \treeoo{\cdb o\ocdx[2]{o}{a1}{160}{U^{(0)}}{left}
\ocdx[2]{o}{a2}{120}{U^{(1)}}{above}
\ocdx[2]{o}{a3}{60}{U^{(n-1)}}{above}
\ocdx[2]{o}{a4}{20}{U^{(n)}}{right}
\node at (140:\xch) {$y_1$};
\node at (90:\xch) {$\cdots$};
\node at (40:\xch) {$y_n$};
\node[below] at ($(o)!0.7!(a1)$) {$\beta_0$};
\node[below] at ($(o)!0.7!(a4)$) {$\beta_{n}$};
\node[right] at ($(o)!0.75!(a2)$) {$\beta_1$};
\node[right] at ($(o)!0.75!(a3)$) {$\beta_{n-1}\!$};
}\quad(\text{by Eq.~(\mref{eq:def4})})\\
&=B^{+}_\omega(T)\diamond U=T\succ'_\omega U\quad(\text{by Eq.~(\mref{eq:trid})}).
\end{align*}
We finally prove the third statement of Eq.~(\mref{eq:cont}), and there are four cases to consider.

\noindent {\bf Case 5:} $T^{(m)}=|=U^{(0)}$. Then
\begin{align*}
T\cdot U&=\bigv x_1,\ldots,x_{m-1},x_m,y_1,\ldots,y_n;m+n+1;\alpha_0,\ldots,\alpha_{m-1},1,\,\beta_1,
 \ldots,\beta_n;
 (T^{(0)},\ldots,T^{(m-1)},|\succ_1 |+|\prec_1 |+|\cdot |, U^{(1)},\ldots,U^{(n)})\\
 &\hspace{3cm}(\text{by Eq.~(\mref{eq:tdcdot})})\\
 &=\bigv x_1,\ldots,x_{m-1},x_m,y_1,\ldots,y_n;m+n+1;\alpha_0,\ldots,\alpha_{m-1},1,\,\beta_1,
 \ldots,\beta_n;
 (T^{(0)},\ldots,T^{(m-1)},|, U^{(1)},\ldots,U^{(n)})
 \quad(\text{by Eq.~(\mref{eq:1star1})})\\
 &=\treeoo{\cdb o
\ocdx[2.5]{o}{a1}{170}{T^{(0)}}{170}
\ocdx[2.5]{o}{a2}{145}{T^{(1)}}{145}
\ocdx[2.5]{o}{a3}{115}{T^{(m-1)}}{115}
\cdx[2.5]{o}{a4}{90}{-90}
\ocdx[2.5]{o}{a5}{65}{U^{(1)}}{65}
\ocdx[2.5]{o}{a6}{35}{U^{(n-1)}}{35}
\ocdx[2.5]{o}{a7}{10}{U^{(n)}}{10}
\node at (157:1.3*\xch) {$x_1$};
\node[rotate=40] at (129:2.5*\xch) {$\cdots$};
\node at (102:1.5*\xch) {$x_m$};
\node at (75:1.5*\xch) {$y_1$};
\node[rotate=-40] at (49:2.5*\xch) {$\cdots$};
\node at (20:1.5*\xch) {$y_n$};
\node[below] at ($(o)!0.7!(a1)$) {$\alpha_0$};
\node[below] at ($(o)!0.8!(a7)$) {\tiny$\beta_{n}$};
\node[below] at ($(o)!0.9!(a2)$) {\tiny$\alpha_1$};
\node[left=-1pt] at ($(o)!0.65!(a3)$) {\tiny$\alpha_{m-1}$};
\node[right=1pt] at ($(o)!0.65!(a5)$) {\tiny$\beta_1$};
\node[right=1pt] at ($(o)!0.65!(a6)$) {\tiny$\beta_{n-1}$};
}
=\treeoo{\cdb o\ocdx[2]{o}{a1}{160}{T^{(0)}}{left}
\ocdx[2]{o}{a2}{120}{T^{(1)}}{above}
\ocdx[2]{o}{a3}{60}{T^{(m-1)}}{above}
\cdx[2]{o}{a4}{20}%{T^{(m)}}{right}
\node at (140:\xch) {$x_1$};
\node at (90:\xch) {$\cdots$};
\node at (40:\xch) {$x_m$};
\node[below] at ($(o)!0.7!(a1)$) {$\alpha_0$};
%\node[below] at ($(o)!0.7!(a4)$) {$\alpha_{m}$};
\node[right] at ($(o)!0.75!(a2)$) {$\alpha_1$};
\node[right] at ($(o)!0.75!(a3)$) {$\alpha_{m-1}\!$};
}\diamond \treeoo{\cdb o\cdx[2]{o}{a1}{160}%{T^{(0)}}{left}
\ocdx[2]{o}{a2}{120}{U^{(1)}}{above}
\ocdx[2]{o}{a3}{60}{U^{(n-1)}}{above}
\ocdx[2]{o}{a4}{20}{U^{(n)}}{right}
\node at (140:\xch) {$y_1$};
\node at (90:\xch) {$\cdots$};
\node at (40:\xch) {$y_n$};
%\node[below] at ($(o)!0.7!(a1)$) {$\alpha_0$};
\node[below] at ($(o)!0.7!(a4)$) {$\beta_{n}$};
\node[right] at ($(o)!0.75!(a2)$) {$\beta_1$};
\node[right] at ($(o)!0.75!(a3)$) {$\beta_{n-1}\!$};
}\quad(\text{by Eq.~(\mref{eq:def3})})\\
&=T\diamond U=T\cdot' U\quad(\text{by Eq.~(\mref{eq:trid})}).
\end{align*}
\noindent {\bf Case 6:} $T^{(m)}\neq |=U^{(0)}$. Then
\begin{align*}
T\cdot U=&\bigv x_1,\ldots,x_{m-1},x_m,y_1,\ldots,y_n;m+n+1;\alpha_0,\ldots,\alpha_{m-1},\alpha_m,\,
\beta_1,\ldots,\beta_n;
 (T^{(0)},\ldots,T^{(m-1)},T^{(m)}\succ_{\alpha_m} |+T^{(m)}\prec_1 |+T^{(m)}\cdot |, U^{(1)},\ldots,U^{(n)})\\
 &\hspace{3cm}(\text{by Eq.~(\mref{eq:tdcdot})})\\
 &=\bigv x_1,\ldots,x_{m-1},x_m,y_1,\ldots,y_n;m+n+1;\alpha_0,\ldots,\alpha_{m-1},\alpha_m,
 \,\beta_1,
 \ldots,\beta_n;
 (T^{(0)},\ldots,T^{(m-1)},T^{(m)}, U^{(1)},\ldots,U^{(n)})\\
 &\hspace{2cm}(\text{by Item~\mref{it:trida} in Definition~\mref{defn:trirec} and Eq.~(\mref{eq:con})})\\
 &=\treeoo{\cdb o
\ocdx[2.5]{o}{a1}{170}{T^{(0)}}{170}
\ocdx[2.5]{o}{a2}{145}{T^{(1)}}{145}
\ocdx[2.5]{o}{a3}{115}{T^{(m-1)}}{115}
\ocdx[2.5]{o}{a4}{90}{T^{(m)}}{90}
\ocdx[2.5]{o}{a5}{65}{U^{(1)}}{65}
\ocdx[2.5]{o}{a6}{35}{U^{(n-1)}}{35}
\ocdx[2.5]{o}{a7}{10}{U^{(n)}}{10}
\node at (157:1.3*\xch) {$x_1$};
\node[rotate=40] at (129:2.5*\xch) {$\cdots$};
\node at (102:1.5*\xch) {$x_m$};
\node at (75:1.5*\xch) {$y_1$};
\node[rotate=-40] at (49:2.5*\xch) {$\cdots$};
\node at (20:1.5*\xch) {$y_n$};
\node[below] at ($(o)!0.7!(a1)$) {$\alpha_0$};
\node[below] at ($(o)!0.8!(a7)$) {\tiny$\beta_{n}$};
\node[below] at ($(o)!0.9!(a2)$) {\tiny$\alpha_1$};
\node[left=-1pt] at ($(o)!0.65!(a3)$) {\tiny$\alpha_{m-1}$};
\node[right=1pt] at ($(o)!0.65!(a5)$) {\tiny$\beta_1$};
\node[right=1pt] at ($(o)!0.65!(a6)$) {\tiny$\beta_{m-1}$};
\node[right] at ($(o)!0.8!(a4)$) {\tiny$\alpha_{m}$};
}
=\treeoo{\cdb o\ocdx[2]{o}{a1}{160}{T^{(0)}}{left}
\ocdx[2]{o}{a2}{120}{T^{(1)}}{above}
\ocdx[2]{o}{a3}{60}{T^{(m-1)}}{above}
\ocdx[2]{o}{a4}{20}{T^{(m)}}{right}
\node at (140:\xch) {$x_1$};
\node at (90:\xch) {$\cdots$};
\node at (40:\xch) {$x_m$};
\node[below] at ($(o)!0.7!(a1)$) {$\alpha_0$};
\node[below] at ($(o)!0.7!(a4)$) {$\alpha_{m}$};
\node[right] at ($(o)!0.75!(a2)$) {$\alpha_1$};
\node[right] at ($(o)!0.75!(a3)$) {$\alpha_{m-1}\!$};
}\diamond \treeoo{\cdb o\cdx[2]{o}{a1}{160}%{T^{(0)}}{left}
\ocdx[2]{o}{a2}{120}{U^{(1)}}{above}
\ocdx[2]{o}{a3}{60}{U^{(n-1)}}{above}
\ocdx[2]{o}{a4}{20}{U^{(n)}}{right}
\node at (140:\xch) {$y_1$};
\node at (90:\xch) {$\cdots$};
\node at (40:\xch) {$y_n$};
%\node[below] at ($(o)!0.7!(a1)$) {$\alpha_0$};
\node[below] at ($(o)!0.7!(a4)$) {$\beta_{n}$};
\node[right] at ($(o)!0.75!(a2)$) {$\beta_1$};
\node[right] at ($(o)!0.75!(a3)$) {$\beta_{n-1}\!$};
}\quad(\text{by Eq.~(\mref{eq:def4})})\\
&=T\diamond U=T\cdot' U\quad(\text{by Eq.~(\mref{eq:trid})}).
\end{align*}
\noindent {\bf Case 7:} $T^{(m)}= |\neq U^{(0)}$. This case is similar to Case 6.

\noindent {\bf Case 8:} $T^{(m)}\neq |\neq U^{(0)}$. Then
{\small{\begin{align*}
T\cdot U&=\bigv x_1,\ldots,x_{m-1},x_m,y_1,\ldots,y_n;m+n+1;\alpha_0,\ldots,\alpha_{m-1},
\alpha_m\beta_0,\,\beta_1,
 \ldots,\beta_n;
 (T^{(0)},\ldots,T^{(m-1)},T^{(m)}\succ_{\alpha_m} U^{(0)}+T^{(m)}\prec_{\beta_0} U^{(0)}+T^{(m)}\cdot U^{(0)},\\
 &\hspace{5cm} U^{(1)},\ldots,U^{(n)})\quad(\text{by Eq.~(\mref{eq:tdcdot})})\\
 &=\bigv x_1,\ldots,x_{m-1},x_m,y_1,\ldots,y_n;m+n+1;\alpha_0,\ldots,\alpha_{m-1},
 \alpha_m\beta_0,\,\beta_1,
 \ldots,\beta_n;
 (T^{(0)},\ldots,T^{(m-1)},T^{(m)}\succ'_{\alpha_m} U^{(0)}+T^{(m)}\prec'_{\beta_0} U^{(0)}+T^{(m)}\cdot' U^{(0)},\\
 & \hspace{5cm}U^{(1)},\ldots,U^{(n)})\quad(\text{by the induction hypothesis})\\
 &=\bigv x_1,\ldots,x_{m-1},x_m,y_1,\ldots,y_n;m+n+1;\alpha_0,\ldots,\alpha_{m-1},
 \alpha_m\beta_0,\,\beta_1,
 \ldots,\beta_n;
 (T^{(0)},\ldots,T^{(m-1)},B^{+}_{\alpha_m}(T^{(m)})\diamond U^{(0)}+T^{(m)}\diamond B^{+}_{\beta_0}(U^{(0)})+T^{(m)}\diamond U^{(0)},\\
 & \hspace{5cm}U^{(1)},\ldots,U^{(n)})\quad(\text{by Eq.~(\mref{eq:trid})})\\
 &=\treeoo{\cdb o
\ocdx[2.5]{o}{a1}{170}{T^{(0)}}{170}
\ocdx[2.5]{o}{a2}{145}{T^{(1)}}{145}
\ocdx[2.5]{o}{a3}{115}{T^{(m-1)}}{115}
\ocdx[4.0]{o}{a4}{90}{B^{+}_{\alpha_m}(T^{(m)})\diamond U^{(0)}}{90}
\ocdx[2.5]{o}{a5}{65}{U^{(1)}}{65}
\ocdx[2.5]{o}{a6}{35}{U^{(n-1)}}{35}
\ocdx[2.5]{o}{a7}{10}{U^{(n)}}{10}
\node at (157:1.3*\xch) {$x_1$};
\node[rotate=40] at (129:2.5*\xch) {$\cdots$};
\node at (102:1.5*\xch) {$x_m$};
\node at (75:1.5*\xch) {$y_1$};
\node[rotate=-40] at (49:2.5*\xch) {$\cdots$};
\node at (20:1.5*\xch) {$y_n$};
\node[below] at ($(o)!0.7!(a1)$) {$\alpha_0$};
\node[below] at ($(o)!0.8!(a7)$) {\tiny$\beta_{n}$};
\node[below] at ($(o)!0.9!(a2)$) {\tiny$\alpha_1$};
\node[left=-1pt] at ($(o)!0.65!(a3)$) {\tiny$\alpha_{m-1}$};
\node[right=1pt] at ($(o)!0.65!(a5)$) {\tiny$\beta_1$};
\node[right=1pt] at ($(o)!0.65!(a6)$) {\tiny$\beta_{m-1}$};
\node[right] at ($(o)!0.9!(a4)$) {\tiny$\alpha_{m}\beta_0$};
}+\treeoo{\cdb o
\ocdx[2.5]{o}{a1}{170}{T^{(0)}}{170}
\ocdx[2.5]{o}{a2}{145}{T^{(1)}}{145}
\ocdx[2.5]{o}{a3}{115}{T^{(m-1)}}{115}
\ocdx[4.0]{o}{a4}{90}{T^{(m)}\diamond B^{+}_{\beta_0}(U^{(0)})}{90}
\ocdx[2.5]{o}{a5}{65}{U^{(1)}}{65}
\ocdx[2.5]{o}{a6}{35}{U^{(n-1)}}{35}
\ocdx[2.5]{o}{a7}{10}{U^{(n)}}{10}
\node at (157:1.3*\xch) {$x_1$};
\node[rotate=40] at (129:2.5*\xch) {$\cdots$};
\node at (102:1.5*\xch) {$x_m$};
\node at (75:1.5*\xch) {$y_1$};
\node[rotate=-40] at (49:2.5*\xch) {$\cdots$};
\node at (20:1.5*\xch) {$y_n$};
\node[below] at ($(o)!0.7!(a1)$) {$\alpha_0$};
\node[below] at ($(o)!0.8!(a7)$) {\tiny$\beta_{n}$};
\node[below] at ($(o)!0.9!(a2)$) {\tiny$\alpha_1$};
\node[left=-1pt] at ($(o)!0.65!(a3)$) {\tiny$\alpha_{m-1}$};
\node[right=1pt] at ($(o)!0.65!(a5)$) {\tiny$\beta_1$};
\node[right=1pt] at ($(o)!0.65!(a6)$) {\tiny$\beta_{m-1}$};
\node[right] at ($(o)!0.9!(a4)$) {\tiny$\alpha_{m}\beta_0$};
}+\treeoo{\cdb o
\ocdx[2.5]{o}{a1}{170}{T^{(0)}}{170}
\ocdx[2.5]{o}{a2}{145}{T^{(1)}}{145}
\ocdx[2.5]{o}{a3}{115}{T^{(m-1)}}{115}
\ocdx[4.0]{o}{a4}{90}{T^{(m)}\diamond U^{(0)}}{90}
\ocdx[2.5]{o}{a5}{65}{U^{(1)}}{65}
\ocdx[2.5]{o}{a6}{35}{U^{(n-1)}}{35}
\ocdx[2.5]{o}{a7}{10}{U^{(n)}}{10}
\node at (157:1.3*\xch) {$x_1$};
\node[rotate=40] at (129:2.5*\xch) {$\cdots$};
\node at (102:1.5*\xch) {$x_m$};
\node at (75:1.5*\xch) {$y_1$};
\node[rotate=-40] at (49:2.5*\xch) {$\cdots$};
\node at (20:1.5*\xch) {$y_n$};
\node[below] at ($(o)!0.7!(a1)$) {$\alpha_0$};
\node[below] at ($(o)!0.8!(a7)$) {\tiny$\beta_{n}$};
\node[below] at ($(o)!0.9!(a2)$) {\tiny$\alpha_1$};
\node[left=-1pt] at ($(o)!0.65!(a3)$) {\tiny$\alpha_{m-1}$};
\node[right=1pt] at ($(o)!0.65!(a5)$) {\tiny$\beta_1$};
\node[right=1pt] at ($(o)!0.65!(a6)$) {\tiny$\beta_{m-1}$};
\node[right] at ($(o)!0.9!(a4)$) {\tiny$\alpha_{m}\beta_0$};
}\\
&=\Biggl(\treeoo{\cdb o\ocdx[2]{o}{a1}{160}{T^{(0)}}{left}
\ocdx[2]{o}{a2}{120}{T^{(1)}}{above}
\ocdx[2]{o}{a3}{60}{T^{(m-1)}}{above}
\cdx[2]{o}{a4}{20}%{T^{(m)}}{right}
\node at (140:\xch) {$x_1$};
\node at (90:\xch) {$\cdots$};
\node at (40:\xch) {$x_m$};
\node[below] at ($(o)!0.7!(a1)$) {$\alpha_0$};
%\node[below] at ($(o)!0.7!(a4)$) {$\alpha_{m}$};
\node[right] at ($(o)!0.75!(a2)$) {$\alpha_1$};
\node[right] at ($(o)!0.75!(a3)$) {$\alpha_{m-1}\!$};
}\diamond B^{+}_{\alpha_m\beta_0}\Big(B^{+}_{\alpha_m}(T^{(m)})\diamond U^{(0)}+T^{(m)}\diamond B^{+}_{\beta_0}(U^{(0)})+T^{(m)}\diamond U^{(0)}\Big)\Biggl)\diamond
\treeoo{\cdb o\cdx[2]{o}{a1}{160}%{T^{(0)}}{left}
\ocdx[2]{o}{a2}{120}{U^{(1)}}{above}
\ocdx[2]{o}{a3}{60}{U^{(n-1)}}{above}
\ocdx[2]{o}{a4}{20}{U^{(n)}}{right}
\node at (140:\xch) {$y_1$};
\node at (90:\xch) {$\cdots$};
\node at (40:\xch) {$y_n$};
%\node[below] at ($(o)!0.7!(a1)$) {$\alpha_0$};
\node[below] at ($(o)!0.7!(a4)$) {$\beta_{n}$};
\node[right] at ($(o)!0.75!(a2)$) {$\beta_1$};
\node[right] at ($(o)!0.75!(a3)$) {$\beta_{n-1}\!$};
}\\
&\hspace{3cm}\quad(\text{by Eqs.~(\mref{eq:def4}) and ~(\mref{eq:def5})})\\
&=\Biggl(\treeoo{\cdb o\ocdx[2]{o}{a1}{160}{T^{(0)}}{left}
\ocdx[2]{o}{a2}{120}{T^{(1)}}{above}
\ocdx[2]{o}{a3}{60}{T^{(m-1)}}{above}
\cdx[2]{o}{a4}{20}%{T^{(m)}}{right}
\node at (140:\xch) {$x_1$};
\node at (90:\xch) {$\cdots$};
\node at (40:\xch) {$x_m$};
\node[below] at ($(o)!0.7!(a1)$) {$\alpha_0$};
%\node[below] at ($(o)!0.7!(a4)$) {$\alpha_{m}$};
\node[right] at ($(o)!0.75!(a2)$) {$\alpha_1$};
\node[right] at ($(o)!0.75!(a3)$) {$\alpha_{m-1}\!$};
}
\diamond \Big(B^{+}_{\alpha_m}(T^{(m)})\diamond B^{+}_{\beta_0}(U^{(0)})\Big)\Biggl)\diamond \treeoo{\cdb o\cdx[2]{o}{a1}{160}%{T^{(0)}}{left}
\ocdx[2]{o}{a2}{120}{U^{(1)}}{above}
\ocdx[2]{o}{a3}{60}{U^{(n-1)}}{above}
\ocdx[2]{o}{a4}{20}{U^{(n)}}{right}
\node at (140:\xch) {$y_1$};
\node at (90:\xch) {$\cdots$};
\node at (40:\xch) {$y_n$};
%\node[below] at ($(o)!0.7!(a1)$) {$\alpha_0$};
\node[below] at ($(o)!0.7!(a4)$) {$\beta_{n}$};
\node[right] at ($(o)!0.75!(a2)$) {$\beta_1$};
\node[right] at ($(o)!0.75!(a3)$) {$\beta_{n-1}\!$};
}\\
&\hspace{3cm}\text{(by Theorem ~\mref{thm:free} of weight $1$)}\\
&=\treeoo{\cdb o\ocdx[2]{o}{a1}{160}{T^{(0)}}{left}
\ocdx[2]{o}{a2}{120}{T^{(1)}}{above}
\ocdx[2]{o}{a3}{60}{T^{(m-1)}}{above}
\ocdx[2]{o}{a4}{20}{T^{(m)}}{right}
\node at (140:\xch) {$x_1$};
\node at (90:\xch) {$\cdots$};
\node at (40:\xch) {$x_m$};
\node[below] at ($(o)!0.7!(a1)$) {$\alpha_0$};
\node[below] at ($(o)!0.7!(a4)$) {$\alpha_{m}$};
\node[right] at ($(o)!0.75!(a2)$) {$\alpha_1$};
\node[right] at ($(o)!0.75!(a3)$) {$\alpha_{m-1}\!$};
}
\diamond \treeoo{\cdb o\ocdx[2]{o}{a1}{160}{U^{(0)}}{left}
\ocdx[2]{o}{a2}{120}{U^{(1)}}{above}
\ocdx[2]{o}{a3}{60}{U^{(n-1)}}{above}
\ocdx[2]{o}{a4}{20}{U^{(n)}}{right}
\node at (140:\xch) {$y_1$};
\node at (90:\xch) {$\cdots$};
\node at (40:\xch) {$y_n$};
\node[below] at ($(o)!0.7!(a1)$) {$\beta_0$};
\node[below] at ($(o)!0.7!(a4)$) {$\beta_{n}$};
\node[right] at ($(o)!0.75!(a2)$) {$\beta_1$};
\node[right] at ($(o)!0.75!(a3)$) {$\beta_{n-1}\!$};
}\quad(\text{by Eq.~(\mref{eq:def6})})\\
&=T\diamond U=T\cdot' U\quad(\text{by Eq.~(\mref{eq:trid})}).
\end{align*}}}
This completes the proof.
\end{proof}
\begin{remark}{\em In the rest of this paper, let $\bfk$ be a field.}
Theorem \ref{thm:main} is also true for any weight $\lambda\neq 0$, at the price of replacing the canonical inclusion $j$ by $\lambda^{-1}j$. A similar (and simpler) proof yields an analogon in weight zero case, namely that the free dendriform family algebra $\bigl(\DDF,(\prec_{\omega}, \succ_\omega)_{\omega \in \Omega}\bigr)$ on $X$ is a dendriform family subalgebra of the free Rota-Baxter family algebra of weight zero. The image of $\DDF$ in $\bfk\cxo T;$ is included in the linear span $\bigl(\bfk\caltbin,\diamond,(B^{+}_{\omega})_{\omega \in \Omega}\bigr)$ of typed decorated rooted trees in which any vertex has exactly one or two incoming edges. Details are left to the reader.
\end{remark}

\subsection{Universal enveloping algebras of (tri)dendriform family algebras}
In this subsection, we mainly prove that the free Rota-Baxter family algebra $\bigl(\bfk \cxo T;, \diamond, (B^+_\omega)_{\omega\in\Omega}\bigr)$ is the universal enveloping algebra of the free dendriform (resp.~tridendriform) family algebra $\DDF$ (resp.~$\DTF$).

\begin{defn}
Let $D$ be a dendriform (resp.~tridendriform) family algebra. A {\bf universal enveloping Rota-Baxter family algebra} of weight $\lambda$ of $D$ is a Rota-Baxter family algebra $\rbf(D):=\rbf_\lambda(D)$, together with a dendriform (resp.~tridendriform) family algebra morphism $j: D\ra \rbf(D)$ such that for any weight $\lambda$ Rota-Baxter family algebra $A$ and dendriform (resp.~tridendriform) family algebra morphism $f:D\ra A$, there is a unique Rota-Baxter family algebra morphism $\bar{f}:\rbf(D)\ra A$ such that $\bar{f}\circ j=f.$
\end{defn}

\delete{
\begin{lemma}\cite{ZGM}
Let $X$ be a set and let $\Omega$ be a semigroup.
Then $(\DDF, (\prec_\omega,\succ_\omega)_{\omega\in\Omega})$ is the free dendriform family algebra, generated by $\biggl\{\stree x\Bigm| x\in X\biggr\}.$
\mlabel{lemma:dend}
\end{lemma}

\begin{lemma}\cite{ZGM}
Let $X$ be a set and let $\Omega$ be a semigroup.
Then $(\DTF,\,(\prec_\omega,\succ_\omega)_{\omega\in\Omega},\cdot)$ is the free tridendriform family algebra, generated by  $\biggl\{\stree x\Bigm|x\in X\biggr\}.$
\mlabel{lemma:trid}
\end{lemma}
}
From Theorem~\mref{thm:sub1}, we know that the free dendriform family algebra $\DDF$ is a dendriform family subalgebra of $\bfk \cxo T; $. Let $j:\DDF\ra \bfk \cxo T; $ be the natural embedding map.

\begin{theorem}
 Let $X$ be a set and let $\Omega$ be a semigroup. Then $\bfk \cxo T;$, together with the canonical inclusion map $j$, is the universal enveloping weight zero Rota-Baxter family algebra of the free dendriform family algebra $\DDF.$
\mlabel{thm:ddf}
\end{theorem}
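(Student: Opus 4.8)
The plan is to deduce the universal property purely formally from the two freeness results already in hand: that $\bfk\calt$ is the free Rota-Baxter family algebra on $X$ (Theorem~\ref{thm:free} with $\lambda=0$) and that $\DDF$ is the free dendriform family algebra on $X$ (Theorem~\ref{thm:free1}). The conceptual bridge between these two is the observation that any morphism of weight zero Rota-Baxter family algebras is \emph{automatically} a morphism for the induced dendriform family structures of Lemma~\ref{lem:dendt}, since those operations $\prec'_\omega$ and $\succ'_\omega$ are built out of $\diamond$ and the $B^{+}_\omega$ alone.

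First I would fix a weight zero Rota-Baxter family algebra $A$, equip it with its induced dendriform family structure, and let $f\colon \DDF\ra A$ be an arbitrary dendriform family algebra morphism. Restricting $f$ along the generator map $x\mapsto \stree x$ yields a set map $g\colon X\ra A$. Applying the freeness of $\bfk\calt$ as a Rota-Baxter family algebra (Theorem~\ref{thm:free}) to $g$ produces a unique Rota-Baxter family algebra morphism $\bar{f}\colon \bfk\calt\ra A$ with $\bar{f}(\stree x)=g(x)$ for all $x\in X$. This $\bar{f}$ is the required candidate.

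To prove $\bar{f}\circ j=f$, I would note that $\bar{f}$, being a Rota-Baxter family algebra morphism, preserves $\diamond$ and the $B^{+}_\omega$, hence preserves $\prec'_\omega$ and $\succ'_\omega$; thus $\bar{f}$ is a dendriform family algebra morphism for the induced structures. Since $j\colon\DDF\ra\bfk\calt$ is itself a dendriform morphism (Theorem~\ref{thm:sub1}), the composite $\bar{f}\circ j$ is a dendriform family algebra morphism $\DDF\ra A$. It agrees with $f$ on generators, since $(\bar{f}\circ j)(\stree x)=\bar{f}(\stree x)=g(x)=f(\stree x)$, using that $j$ sends the one-vertex tree $\stree x\in\DDF$ to $\stree x\in\bfk\calt$. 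By the universal property of $\DDF$ as the free dendriform family algebra (Theorem~\ref{thm:free1}), two dendriform morphisms out of $\DDF$ that coincide on $X$ are equal, whence $\bar{f}\circ j=f$. Uniqueness of $\bar{f}$ follows the same route: any Rota-Baxter family algebra morphism $\bar{f}'$ with $\bar{f}'\circ j=f$ satisfies $\bar{f}'(\stree x)=g(x)$ for all $x$, and the uniqueness clause of Theorem~\ref{thm:free} then forces $\bar{f}'=\bar{f}$.

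The one step requiring genuine, if brief, verification, and the likeliest place for an oversight, is the claim that a Rota-Baxter family algebra morphism intertwines the induced dendriform operations. This is where the weight zero formulas $T\prec'_\omega U=T\diamond B^{+}_\omega(U)$ and $T\succ'_\omega U=B^{+}_\omega(T)\diamond U$ of Lemma~\ref{lem:dendt} must be matched on both the source $\bfk\calt$ and the target $A$, and one must be careful that the induced dendriform structure on $A$ is precisely the one with respect to which $f$ was assumed to be a morphism. Once this compatibility is in place, everything else is a formal consequence of the two freeness results, so the argument is essentially diagram-chasing through the generators $X$.
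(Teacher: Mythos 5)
Your proposal is correct and follows essentially the same route as the paper's own proof: apply the freeness of $\bfk\calt$ as a weight-zero Rota-Baxter family algebra (Theorem~\mref{thm:free}) to the restriction of $f$ to the generators $\stree x$ to obtain the unique morphism $\bar f$, then check commutativity of the triangle on generators. You in fact spell out a step the paper leaves implicit — that $\bar f\circ j$ is itself a dendriform family morphism (because Rota-Baxter family morphisms intertwine the induced operations of Lemma~\mref{lem:dendt}, and $j$ is a dendriform embedding by Theorem~\mref{thm:sub1}), so that the freeness of $\DDF$ (Theorem~\mref{thm:free1}) upgrades agreement on generators to the identity $\bar f\circ j=f$ — which makes your write-up slightly more complete than the paper's.
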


\begin{proof}
From Lemma~\mref{lem:dendt}, a Rota-Baxter family algebra of weight $0$ induces a dendriform family algebra. Let $(A,(Q_\omega)_\omega)$ be any Rota-Baxter family algebra of weight $0$ and let $g:\DDF\ra A$ be a dendriform family algebra morphism. We need to find a Rota-Baxter family algebra morphism $\bar{g}:\bfk\cxo F;\ra A$
making the following diagram commute.

$$
\xymatrix@C=2cm{
  \DDF \ar[d]_{g} \ar[r]^-{j}
                & \bfk \cxo T;
                \ar@{.>}[dl]_{\bar{g}} \\
  A
                          }
$$
\noindent From Lemma~\mref{thm:free1} and Theorem~\mref{thm:sub1}, we know that $\DDF$ is generated by $\biggl\{\stree x\Bigm| x\in X\biggr\}$ and $\bfk \cxo T; $ is generated by $\DDF$ under the operations $(B^+_\omega)_{\omega\in\Omega}$ and $\diamond$. Hence $\bfk \cxo T; $ is generated by $\biggl\{\stree x\Bigm| x\in X\biggr\}$ under the operations $(B^+_\omega)_{\omega\in\Omega}$ and $\diamond.$ By the freeness of $\bfk \cxo T; $, there is a unique Rota-Baxter family algebra morphism $\bar{g}: \bfk \cxo T; \ra A$, so that $\bar{g}\Big(\stree x\Big):=g\Big(\stree x\Big)=a_x$. Then $g\Big(\stree x\Big)=\bar{g}\circ j\Big(\stree x\Big).$ So $g=\bar{g}\circ j$ and the diagram commutes.
\end{proof}

From Theorem~\mref{thm:main}, we know that the free tridendriform family algebra $\DTF$ is a tridendriform family subalgebra of $\bfk \cxo T; $. Let $j:\DTF\ra \bfk \cxo T; $ be the natural embedding map.

\begin{theorem}
Let $X$ be a set and let $\Omega$ be a semigroup. Let $\lambda$ be any nonzero element of the base field $\bfk$. Then $\bfk \cxo T; $, together with $\lambda^{-1}j$, is the universal enveloping weight $\lambda$ Rota-Baxter family algebra of the free tridendriform family algebra $\DTF.$
\mlabel{thm:dtf}
\end{theorem}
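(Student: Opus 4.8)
The plan is to imitate the proof of Theorem~\ref{thm:ddf} almost verbatim, the only genuinely new ingredient being the scalar $\lambda$. Two preliminary facts, both weight-$\lambda$ analogues of statements already proved, are needed. First, Lemma~\ref{lem:tendt} carries over to weight $\lambda$: the Rota-Baxter family algebra $\bigl(\bfk\cxo T;,\diamond,(B^{+}_{\omega})_{\omega\in\Omega}\bigr)$ of weight $\lambda$ induces a tridendriform family algebra with $T\prec'_\omega U=T\diamond B^{+}_\omega(U)$, $T\succ'_\omega U=B^{+}_\omega(T)\diamond U$ and $T\cdot' U=\lambda\,(T\diamond U)$, the factor $\lambda$ in the associative product being exactly what the tridendriform axioms force in weight $\lambda$ (it is the image of the term $\lambda ab$ in Eq.~\eqref{eq:RBF}). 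Second, by the Remark following Theorem~\ref{thm:main}, the scaled inclusion $\lambda^{-1}j$ is a tridendriform family algebra morphism realizing $\DTF$ as a tridendriform family subalgebra of $\bigl(\bfk\cxo T;,(\prec'_\omega,\succ'_\omega)_{\omega\in\Omega},\cdot'\bigr)$; I will write $m:=\lambda^{-1}j$ for this structure map, so that $m(\stree x)=\lambda^{-1}\stree x$ on the generators.

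Granting these, let $\bigl(A,(Q_\omega)_{\omega\in\Omega}\bigr)$ be an arbitrary Rota-Baxter family algebra of weight $\lambda$, endowed with its induced tridendriform family structure, and let $g\colon\DTF\to A$ be a tridendriform family algebra morphism. I would first define $\bar g$ on the generators of $\bfk\cxo T;$ by $\bar g(\stree x):=\lambda\,g(\stree x)$, the factor $\lambda$ being dictated by the requirement $\bar g\circ m=g$ on generators, since $\bar g(m(\stree x))=\lambda^{-1}\bar g(\stree x)=g(\stree x)$. Because $\bigl(\bfk\cxo T;,\diamond,(B^{+}_\omega)_{\omega\in\Omega}\bigr)$ is the free Rota-Baxter family algebra of weight $\lambda$ on $X$ (Theorem~\ref{thm:free}), this assignment extends uniquely to a Rota-Baxter family algebra morphism $\bar g\colon\bfk\cxo T;\to A$. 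The same freeness yields uniqueness: any Rota-Baxter family morphism $\bar g'$ with $\bar g'\circ m=g$ must satisfy $\bar g'(\stree x)=\lambda\,g(\stree x)$, whence $\bar g'=\bar g$.

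It remains to verify the factorization $\bar g\circ m=g$ on all of $\DTF$, not merely on generators. The key point is that $\bar g\circ m$ is itself a tridendriform family algebra morphism $\DTF\to A$: the map $m$ is one by the second preliminary fact, and any Rota-Baxter family algebra morphism between weight-$\lambda$ Rota-Baxter family algebras automatically respects the induced tridendriform operations, since these are expressed solely through $\diamond$, $\lambda$ and the operators $B^{+}_\omega$ (respectively through the product, $\lambda$ and the $Q_\omega$ on $A$); for instance $\bar g\bigl(T\diamond B^{+}_\omega(U)\bigr)=\bar g(T)\,Q_\omega\bigl(\bar g(U)\bigr)$, so $\bar g(T\prec'_\omega U)=\bar g(T)\prec'_\omega\bar g(U)$, and likewise for $\succ'_\omega$ and $\cdot'$. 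Thus $\bar g\circ m$ and $g$ are two tridendriform family algebra morphisms agreeing on the generating set $\bigl\{\stree x\mid x\in X\bigr\}$; since $\DTF$ is the free tridendriform family algebra on $X$ generated by this set (Lemma~\ref{lem:free2}), they coincide, so $\bar g\circ m=g$ and the universal property holds.

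The main obstacle is the bookkeeping of the scalar $\lambda$ across the three layers of the argument: the factor $\lambda$ that must be inserted into the associative product $\cdot'$ of the induced structure (weight-$\lambda$ analogue of Lemma~\ref{lem:tendt}), the compensating factor $\lambda^{-1}$ in the structure map $m=\lambda^{-1}j$ (weight-$\lambda$ analogue of Theorem~\ref{thm:main}, the Remark), and the factor $\lambda$ in the normalization $\bar g(\stree x)=\lambda\,g(\stree x)$. The genuinely delicate step is the second: unlike the weight-one case of Theorem~\ref{thm:main}, where the bare inclusion of Schr\"oder trees is already a tridendriform morphism, for $\lambda\neq 1$ the combinatorial operations of $\DTF$ and the induced operations on $\bfk\cxo T;$ no longer agree tree by tree (the $\lambda$ in Eq.~\eqref{eq:def6} produces mismatched coefficients on the $\cdot$-type summands), so one must check that the rescaling by $\lambda^{-1}$, propagated through the free tridendriform structure, exactly absorbs this discrepancy. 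Once that is in place, the universal property follows formally from the two freeness results together with the routine fact that Rota-Baxter family morphisms preserve induced tridendriform operations. The hypotheses that $\bfk$ is a field and $\lambda\neq 0$ are used precisely to make $\lambda^{-1}$ available.
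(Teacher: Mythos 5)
Your proposal is correct and takes essentially the same route as the paper, whose entire proof of Theorem~\ref{thm:dtf} consists of the remark that it is ``similar to Theorem~\ref{thm:ddf}'': you reproduce exactly that freeness argument, supplying the details the paper leaves to the reader, namely the weight-$\lambda$ induced tridendriform operations with $T\cdot' U=\lambda\,(T\diamond U)$, the normalization $\bar g\bigl(\stree x\bigr)=\lambda\,g\bigl(\stree x\bigr)$ forced by $\bar g\circ(\lambda^{-1}j)=g$, and the justification (via the fact that Rota-Baxter family morphisms preserve the induced operations, together with freeness of $\DTF$ from Lemma~\ref{lem:free2}) that the factorization holds beyond the generators. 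The one ingredient you take on trust---that $\lambda^{-1}j$ embeds $\DTF$ as a tridendriform family subalgebra of the weight-$\lambda$ free object---is precisely the unproved assertion of the paper's own Remark preceding the theorem, and you correctly single it out as the delicate coefficient-bookkeeping point rather than claiming it follows for free.
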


\begin{proof}
The proof is similar to Theorem~\mref{thm:ddf}. Details are left to the reader.
\end{proof}

%By Lemma~\mref{lemma:RBTD3}~\mref{it:tritodend}, a tridendriform family algebra $\DTF$ induces two
%dendriform family algebras. Let $j_\ell: \DDF\ra \DTF$.

\smallskip

\noindent {\bf Acknowledgments}:
This work was supported by the National Natural Science Foundation
of China (Grant No.\@ 11771191 and 11861051).
\medskip

\end{document}